\documentclass[a4paper,10pt, reqno]{amsart}

\usepackage{amsmath,amssymb,latexsym, amsfonts}
\usepackage{verbatim}
\usepackage{times}
\usepackage{mathbbol}
\usepackage{hyperref}
\usepackage{mathabx}
\usepackage{bbm}

\usepackage{tikz}

\usepackage[all,ps]{xy}
\usepackage{color}

\usepackage{stmaryrd}

\usepackage{capt-of}

\newcommand{\compactlist}[1]{\setlength{\itemsep}{0pt} \setlength{\parskip}{0pt} \setlength{\leftskip}{-0.#1em}}

%
%

\numberwithin{equation}{section}

%
%

\theoremstyle{plain}

\newtheorem{theorem}{Theorem}[section]

\newtheorem{prop}[theorem]{Proposition}

\newtheorem{lem}[theorem]{Lemma}
\newtheorem{lemdfn}[theorem]{Lemma and Definition}
\newtheorem{dfnlem}[theorem]{Definition and Lemma}

\newtheorem{cor}[theorem]{Corollary}

\theoremstyle{definition}
\newtheorem{definition}[theorem]{Definition}
\newtheorem{dfn}[theorem]{Definition}
\newtheorem{example}[theorem]{Example}

\newtheorem{rem}[theorem]{Remark}

%
%

%
%

\newcommand{\ahha}{{\scriptscriptstyle{A}}}
\newcommand{\behhe}{{\scriptscriptstyle{B}}}

\newcommand{\emme}{{\scriptscriptstyle{M}}}
\newcommand{\enne}{{\scriptscriptstyle{N}}}

\newcommand{\uhhu}{{\scriptscriptstyle{U}}}

\newcommand{\vauu}{{\scriptscriptstyle{V}}}

%
%

%
%

%
%

\newcommand{\ga}{\alpha} 
\newcommand{\gb}{\beta}  

\newcommand{\gd}{\delta} 
\newcommand{\gD}{\Delta} 
\newcommand{\gve}{\varepsilon} 
  
\newcommand{\gvf}{\varphi}  

\newcommand{\gl}{\lambda}

\newcommand{\gs}{\sigma} 
\newcommand{\gS}{\Sigma}

%
%

\newcommand{\cC}{{\mathcal C}}

\newcommand{\cE}{{\mathcal E}}

%
%
\newcommand{\Aut}{\operatorname{Aut}}

\newcommand{\Hom}{\operatorname{Hom}}

\newcommand{\Tor}{{\rm Tor}}
\newcommand{\Ext}{{\rm Ext}}

\newcommand{\Cotor}{\operatorname{Cotor}}

\newcommand{\id}{{\rm id}}



%
%


%
%

\newcommand{\due}[3]{{}_{{#2 }} {#1}_{{ #3}}\,}    


\newcommand{\pl}{\partial}


\newcommand{\rmref}[1]{{\rm (}\ref{#1}{\rm )}}

\newcommand{{\Hl}}{{H^{\ell}}} 
\newcommand{{\mHop}}{{m_{H^{\rm op}}}} 
\newcommand{{\Hop}}{{H^{\rm op}}} 
\newcommand{{\mUop}}{{m_{U^{\rm op}}}} 
\newcommand{{\mUopp}}{{m_{\scriptscriptstyle{U^{\rm op}}}}} 
\newcommand{{\Uop}}{{U^{\rm op}}}
\newcommand{{\mVop}}{{m_{V^{\rm op}}}} 
\newcommand{{\Vop}}{{V^{\rm op}}}  
\newcommand{{\Ae}}{{A^{\rm e}}}
\newcommand{{\Be}}{{B^{\rm e}}}
\newcommand{{\Ue}}{{U^{\rm e}}}
\newcommand{{\He}}{{H^{\rm e}}}
\newcommand{{\Aop}}{{A^{\rm op}}}
\newcommand{{\Aope}}{({A^{\rm op}})^{\rm e}}
\newcommand{{\Aopl}}{{A^{\rm op}_\pl}}

\newcommand{{\Bop}}{{B^{\rm op}}}
\newcommand{{\Bopp}}{{\scriptscriptstyle{{B^{\rm op}}}}}
\newcommand{{\Bope}}{({B^{\rm op}})^{\rm e}}
\newcommand{{\Bpl}}{{B_\pl}}

\newcommand{{\op}}{{{\rm op}}}
\newcommand{{\coop}}{{{\rm coop}}}
\newcommand{{\sop}}{{*^{\rm op}}}
\newcommand{{\co}}{{{\rm co}}}

\newcommand{\kmod}{k\mbox{-}\mathbf{Mod}}                     %
\newcommand{\amoda}{A^{\rm e}\mbox{-}\mathbf{Mod}}                  %
\newcommand{\bmodb}{\mathbf{Mod}\mbox{-}\Be}                     %

\newcommand{\umod}{U\mbox{-}\mathbf{Mod}}                     
\newcommand{\modu}{U^\mathrm{op}\mbox{-}\mathbf{Mod}}         %
\newcommand{\yd}{{}^\uhhu_\uhhu\mathbf{YD}}                     
\newcommand{\ydr}{\mathbf{YD}{}^\vauu_\vauu} 
\newcommand{\yddual}{\mathbf{YD}^{\scriptscriptstyle{U^*}}_{\scriptscriptstyle{U^*}}}

\newcommand{\comodu}{\mathbf{Comod}\mbox{-}U}         
\newcommand{\ucomod}{U\mbox{-}\mathbf{Comod}}         
\newcommand{\comodv}{\mathbf{Comod}\mbox{-}V}                  
\newcommand{\vcomod}{V\mbox{-}\mathbf{Comod}}

\newcommand{\lact}{\smalltriangleright}                  
\newcommand{\ract}{\smalltriangleleft}
\newcommand{\blact}{\blacktriangleright}  
\newcommand{\bract}{\blacktriangleleft}

\newcommand{{\gog}}{{G \rightrightarrows G_0}}

\newcommand{{\rra}}{\rightrightarrows}

\newcommand{{\lra}}{\ \longrightarrow \ }
\newcommand{{\lla}}{\ \longleftarrow \ }
\newcommand{{\lma}}{\ \longmapsto \ }



\newcommand{{\bull}}{{\scriptscriptstyle{\bullet}}}
\newcommand{{\qqquad}}{{\quad\quad\quad}}
\newcommand{\Aopp}{{\scriptscriptstyle{\Aop}}}
\newcommand{\Aee}{{\scriptscriptstyle{\Ae}}}


\sloppy

\begin{document}

\title{When $\Ext$ is a {B}atalin-{V}ilkovisky algebra} 

\author{Niels Kowalzig}

\begin{abstract}
We show under what conditions the complex computing general $\Ext$-groups carries the structure of a cyclic operad such that $\Ext$ becomes a Batalin-Vilkovisky algebra. This is achieved by transferring cyclic cohomology theories 
for the dual of a (left) Hopf algebroid to the complex in question, which asks for the notion of contramodules introduced along with comodules by Eilenberg-Moore half a century ago. Another crucial ingredient is an explicit formula for the inverse of the Hopf-Galois map on the dual, by which we illustrate recent categorical results and answer a long-standing open question. As an application, we prove that the Hochschild cohomology of an associative algebra $A$ is Batalin-Vilkovisky if $A$ itself is a contramodule over its enveloping algebra $A \otimes \Aop$. This is, for example, the case for symmetric algebras and Frobenius algebras with semisimple Nakayama automorphism. We also recover the construction for Hopf algebras.
\end{abstract}

\address{Dipartimento di Matematica, Universit\`a degli Studi di Roma La
Sapienza, P.le Aldo Moro 5, 00185 Roma, Italia}

\email{niels.kowalzig@uniroma1.it}

\keywords{Batalin-Vilkovisky algebras, cyclic operads, Hopf algebroids, duals, Hopf-Galois maps, contramodules, Frobenius algebras, Hopf algebras}

\subjclass[2010]{{18D50, 16E40, 19D55, 16E45, 16T05, 58B34.}}

\maketitle

\tableofcontents


\section{Introduction}

The notion of higher structures on cohomology groups, more precisely, of Gerstenhaber algebras (consisting of a graded commutative product together with a graded Lie bracket that determines graded inner derivations of the product) and the stronger notion of Batalin-Vilkovisky algebras (a Gerstenhaber algebra with a degree $-1$ differential $B$ that fails to be a graded derivation of the product exactly by the graded Lie bracket), has attracted quite some attention recently; see, for example, 
\cite{Alm:AUAISOBVAWMZVC, BraLaz:HBVAIPG, EuSche:CYFA, GalTonVal:HBVA, Gin:CYA, KauWarZun:TOOOGBBVOAME, Lam:VDBDABVSOCYA, LamZhoZim:THCROAFAWSSNAIABVA, LiuZho:BVSOTCROAGA, Lod:ACOPIHC, Men:BVAACCOHA, Men:CMCMIALAM, KowKra:BVSOEAT,  Kow:BVASOCAPB, Kow:GABVSOMOO, Tra:TBVAOHCIBIIP, Vol:BVDOHCOFA, Wan:SHCAGAS} and references therein. 
A particular focus naturally lies on Hochschild theory: whereas it is a classical result \cite{Ger:TCSOAAR} 
that Hochschild cohomology $H^\bull(A,A)$ of an associative algebra $A$ (over a commutative ring $k$) always carries a Gerstenhaber algebra structure, this structure is not necessarily that of a Batalin-Vilkovisky (BV) algebra: a counterexample of an algebra the Hochschild cohomology of which is not BV can be easily constructed 
by considering a free algebra in two generators \cite{Krae:PC}. However, it is known for the following classes of algebras that $H^\bull(A,A)$ does indeed admit the structure of a BV algebra:
\begin{itemize}
\item
symmetric algebras \cite{Tra:TBVAOHCIBIIP, Men:BVAACCOHA};
\item
Frobenius algebras with semisimple Nakayama automorphism \cite{LamZhoZim:THCROAFAWSSNAIABVA, Vol:BVDOHCOFA};
\item
Calabi-Yau algebras \cite{Gin:CYA};
\item
twisted Calabi-Yau algebras \cite{KowKra:BVSOEAT},
\end{itemize}
and probably more. 
These results were obtained by various different approaches, for example, those in \cite{Gin:CYA, KowKra:BVSOEAT, LamZhoZim:THCROAFAWSSNAIABVA} by passing through a sort of Poincar\'e duality \cite{VdB:ARBHHACFGR, Lam:VDBDABVSOCYA, KowKra:DAPIACT} and using the notion of noncommutative differential calculus \cite{TamTsy:NCDCHBVAAFC}, or more precisely, the notion of a BV {\em module} structure on the respective Hochschild {\em homology} $H_\bull(A,A)$, see also \cite{Kow:GABVSOMOO} for a generalised treatment. 

It would be desirable to have a more direct approach ({\it i.e.}, one that does not use Poincar\'e duality) and in particular one method that covers all cases and yields a sufficient criterion to decide whether $H^\bull(A,A)$ is a BV algebra.

\subsection{Aims and objectives}
\label{aims}
The aim of this paper is threefold. 
First, this paper is a continuation of preceding work in \cite{KowKra:BVSOEAT, Kow:BVASOCAPB, Kow:GABVSOMOO} in which we investigated Gerstenhaber and BV algebra (as well as module) structures
on derived functors over quite general rings, or more precisely, on $\Ext^\bull_U(A,M)$, $\Tor^\bull(N,A)$, and $\Cotor^\bull_U(A,M)$ for a bialgebroid $U$ over a in general noncommutative base algebra $A$ and certain coefficients $M, N$. Here, the question remained open in which circumstances the canonical Gerstenhaber structure on $\Ext^\bull_U(A,M)$ given in \cite{KowKra:BVSOEAT, Kow:BVASOCAPB} (for certain coefficients $M$) is indeed a BV algebra.

The question about the existence of these higher structures is related to the structure of a cocyclic module (in the sense of Connes \cite{Con:CCEFE}) on the complexes computing the respective (co)homology groups and, in particular, to the existence of a (co)cyclic operator such that the respective complexes become a {\em cyclic operad} \cite{GetKap:COACH}, which implies a BV algebra structure on cohomology \cite{Men:BVAACCOHA}: see \S\ref{prelim} for all necessary definitions and results we are going to use. 

Hence, put differently, the first question we want to answer is: under which conditions is there a cyclic structure on the complex computing $\Ext^\bull_U(A,M)$? 

Observe that this question already appeared very early on a basic level \cite{Con:NCDG}: as discussed shortly in Remark \ref{spina}, it is a priori not clear how to define a cocyclic operator on the Hochschild complex $C^\bull(A,A)$ (which computes $\Ext_\Ae(A,A)$ if $A$ is $k$-projective) for an arbitrary associative $k$-algebra $A$ with coefficients in the algebra itself.

As Hochschild theory is obtained from bialgebroid theory by considering the bialgebroid $(\Ae,A)$, an answer in the bialgebroid setting will give an answer to this problem as well, which is the second goal of this article: for which associative $k$-algebras $A$ one can find a cocyclic operator on the Hochschild complex $C^\bull(A,A)$ making it into a cyclic operad and hence its Hochschild cohomology $H^\bull(A,A)$ into a BV algebra? 


On the other extreme, 
as for the cyclic cohomology of a Hopf algebra $H$ over a commutative ring $k$ (see, {\it e.g.}, \cite{Men:BVAACCOHA, Men:CMCMIALAM} for an overview), the cocyclic operator $\tau$ for a cochain of, say, degree one $f \in \Hom_k(H,k)$ would be simply $\tau (f) := f \circ S$, where $S$ is the antipode of $H$. This unfortunately cannot be so easily generalised to a (left) Hopf algebroid $(U,A)$ as usually there is no antipode in a proper sense, and even if there were, this would not be of much help: here, the cochain space in degree one is $\Hom_\Aopp(U,A)$ and a possible antipode would turn the (various) $A$-module structures around, that is, $f \circ S$ would not land in $\Hom_\Aopp(U,A)$ again.

A simple idea of how to possibly obtain a cocyclic structure on the complex computing $\Ext^\bull_U(A,M)$ goes by passing through the duals: if a left bialgebroid $(U,A)$ is finitely generated $A$-projective (in one of the possible four senses), then one knows \cite{KadSzl:BAODTEAD} that the (right) dual $U^* := \Hom_\Aopp(U,A)$ is a {\em right} bialgebroid. 

On the other hand, in \cite{Kow:BVASOCAPB} we showed that the $\Cotor$-groups over a (left) Hopf algebroid carry the structure of a BV algebra, and via the 
$k$-module isomorphism $\Cotor^\bull_{U^*}(M,A) \simeq \Ext^\bull_U(A,M)$, where on the left hand side $M$ is considered as a right $U^*$-comodule and on the right hand side as a left $U$-module, there should be one on the $\Ext$-groups as well, arising from a cocyclic structure that, once obtained, possibly makes sense even if one drops the finiteness assumption, which is needed if one wants to include the Hochschild theory as $\Ae$ usually is not finitely generated over $A$. 

Here, however, arise two difficulties, which lead us to the third goal in this paper: for $\Cotor^\bull_{U^*}(M,A)$ to be a BV algebra, the right bialgebroid $U^*$ needs to carry a (right) Hopf algebroid structure, but until very recently it was not known whether this is the case. The question was asked in \cite{Boe:HA} and probably earlier, some progress in this direction was achieved in \cite{CheGavKow:DFOLHA}, but only in \cite{Schau:TDATDOAHAAHA} an affirmative answer was given by an elegant abstract categorical reasoning, which unfortunately lacked an explicit formula for the {\em translation map} characterising the Hopf structure as a substitute for the antipode, see \S\ref{gamberetti} for all technical details. Hence, the third question we aim to answer in this article is: what is the explicit Hopf structure on the dual $U^*$ of a left Hopf algebroid $U$? 

On top, there arise even more technical complications as the coefficient module $M$ in $\Cotor^\bull_{U^*}(M,A)$ needs to be a {\em (stable) anti Yetter-Drinfel'd module} over $U^*$, which means a left $U^*$-module and right $U^*$-comodule with action and coaction compatible in a certain way. Now, right $U^*$-comodules correspond to left $U$-modules, whereas left $U^*$-modules rather correspond to {\em right $U$-contramodules} which were introduced by \cite{EilMoo:FORHA} half a century ago, but later somehow forgotten. Hence, another question we wish to clarify is how the anti Yetter-Drinfel'd compatibility of a left $U^*$-module right $U^*$-comodule transforms into a compatibility between a left $U$-module structure and a right $U$-contramodule structure on the same underlying $A$-module.

\subsection{Main results}
\label{main}
With respect to the three question just asked, let us list the answers we found. Again, we refer to the main text for all details as well as notation.

\subsubsection{Duals of (left) Hopf algebroids}
In Theorem \ref{Hopfdual}, we give an explicit expression for the translation map on the (right) dual of a left Hopf algebroid:

\begin{theorem}
\label{A}
Let $(U,A)$ be a left Hopf algebroid with translation map $u \mapsto u_+ \otimes_\Aopp u_-$ for $u \in U$, and let $U$ be finitely generated projective as a right $A$-module via the target map. Then the right dual $(U^*,A)$ carries the structure of a right Hopf algebroid over a right bialgebroid. 
More precisely, the map $\tilde\beta^{-1}: U^* \to {U^*} \otimes_\Aopp U^*$ given by
\begin{equation*}
({\tilde\gb}^{-1}(\phi))(u,v) = (u \rightslice \phi)(v) 
:= \gve(\phi(u_-v) \blact u_+)
\end{equation*}
yields a translation map 
on $U^*$.
Explicitly, if $\{e_i\}_{1 \leq i \leq n} \in U, \ \{ e^i\}_{1 \leq i \leq n} \in U^*$ is a dual basis, the translation 
map reads as
\begin{equation*}
\phi^- \otimes_\Aopp \phi^+ := \textstyle\sum_i e^i \otimes_\Aopp (e_i \rightslice \phi).
\end{equation*}
\end{theorem}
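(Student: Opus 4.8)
The statement has two layers. By \cite{KadSzl:BAODTEAD} the right dual $U^* = \Hom_\Aopp(U,A)$ already carries a right bialgebroid structure (this being where finite projectivity via the target enters), and by the abstract result of \cite{Schau:TDATDOAHAAHA} it is known that $U^*$ admits \emph{some} right Hopf structure; the genuine content of the theorem is therefore the \emph{explicit} formula. Accordingly, the plan is not to reprove existence but to verify that the stated assignment $\phi \mapsto \phi^- \otimes_\Aopp \phi^+$ is a translation map. I would proceed by one of two equivalent routes: (a) build from it the full inverse Hopf--Galois map on $U^*$ and check directly that this is two-sided inverse to the canonical map $\tilde\beta$ whose bijectivity defines the right Hopf condition, or (b) verify that $\phi \mapsto \phi^- \otimes_\Aopp \phi^+$ satisfies the defining axioms of a translation map in the sense of \cite{Schau:TDATDOAHAAHA}. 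I would favour (b), as it delivers the Hopf structure and the explicit formula at once.

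Before any identity is checked, well-definedness must be established, and this is the first real step. I would verify in turn: that for fixed $u$ the map $v \mapsto \gve(\phi(u_- v) \blact u_+)$ is genuinely an element of $U^* = \Hom_\Aopp(U,A)$, which reduces to an $A$-linearity computation exploiting that $u_+ \otimes_\Aopp u_-$ lies in the relevant Takeuchi product; that $\sum_i e^i \otimes_\Aopp (e_i \rightslice \phi)$ is independent of the chosen dual basis $\{e_i\}, \{e^i\}$ and descends to the balanced tensor product over $A^{\mathrm{op}}$; and that it lands in the Takeuchi subproduct required of the codomain of a translation map. Finiteness and projectivity of $U$ as a right $A$-module via the target are precisely what guarantee the dual basis and the completeness relation used throughout.

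The core of the proof is then a direct computation. Evaluating the proposed map on its arguments and pairing the outcome against arbitrary elements of $U$, I would reduce each required identity to the standard translation-map relations for the \emph{left} Hopf algebroid $U$ --- principally $u_{+(1)} \otimes_A u_{+(2)} u_- = u \otimes_A 1$ and the multiplicativity $(uv)_+ \otimes_\Aopp (uv)_- = u_+ v_+ \otimes_\Aopp v_- u_-$, together with the counit relations --- and to the dual-basis completeness relation reconstructing $u \in U$ from $\{e_i\}, \{e^i\}$. Passing the unit $1 \in U^*$ through the resulting inverse and resolving the identity through $\sum_i e^i \otimes_\Aopp e_i$ then produces the closed form $\phi^- \otimes_\Aopp \phi^+ = \sum_i e^i \otimes_\Aopp (e_i \rightslice \phi)$.

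I expect the main obstacle to be bookkeeping rather than a single conceptual hurdle. The module $U^*$ carries several distinct $A$-actions, inherited from the source and target maps of both $U$ and $U^*$, and the pairing $\gve(\phi(u_- v) \blact u_+)$ entangles the action $\blact$ with the translation map $u \mapsto u_+ \otimes_\Aopp u_-$, so that every rewriting must respect the correct balancing over $A$ and not merely over $k$. Choosing the order in which to apply the translation-map relations so that the Takeuchi constraints stay visibly intact at each stage, and confirming that the final expression is well defined over the noncommutative base $A$, is where the delicacy of the argument resides.
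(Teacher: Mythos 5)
Your proposal is correct and follows essentially the same route as the paper: the paper likewise takes the right bialgebroid structure on $U^*$ from \cite{KadSzl:BAODTEAD}, observes that since the Hopf--Galois map $\gb$ is a module morphism it suffices to check the translation-map identities \eqref{Rch2} and \eqref{Rch3} for the candidate $\phi^- \otimes_\Aopp \phi^+ = \sum_i e^i \otimes_\Aopp (e_i \rightslice \phi)$, and then verifies \eqref{Rch2} by exactly the computation you describe --- pairing against arbitrary $u \otimes_\Aopp v \in U \otimes_\Aopp U$ and reducing via the dual-basis decomposition \eqref{schizzaestrappa1} and the translation-map relations \eqref{Sch3}, \eqref{Sch6}, \eqref{Sch9} of $U$ (with \eqref{Rch3} left to the reader). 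The well-definedness issues you flag are handled in the paper's preliminaries, where \eqref{gianduiotto1a} is already established as a left $U$-module structure on $U^*$.
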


If $U$ happens to be not only a left Hopf algebroid but also a right Hopf algebroid (still on the underlying structure of a left bialgebroid) in the sense mentioned, for example, in \cite[\S2.3]{CheGavKow:DFOLHA} or \S\ref{gamberetti}, 
then applying the above map for $v=1$ leads to an isomorphism between the right dual $U^*$ and the left dual $U_*$, and one can then speak of {\em the only} dual, which is by the above again a left and a right Hopf algebroid (over the underlying structure of a right bialgebroid), see Remark \ref{nocheins}. This should lead to a possibly easier statement compared to the approach in \cite[\S5]{BoeSzl:HAWBAAIAD} about the dual(s) of a {\em full} Hopf algebroid.

\subsubsection{Contramodules}

A (right) contramodule over a left bialgebroid $(U,A)$ is a right $A$-module $M$ together with a right $A$-module map 
$$
\gamma: \Hom_\Aopp(U,M) \to M
$$ 
that fulfils a sort of associativity and unitality property, see Definition \ref{schoenwaers} for details. As mentioned in \S\ref{aims}, we are interested in the question of how anti Yetter-Drinfel'd modules over the dual $U^*$ of a finitely generated $A$-projective left Hopf algebroid $U$ transform into a module and contramodule over $U$ with compatibility between the action and the contraaction, leading to the notion of stable anti Yetter-Drinfel'd {\em contra}modules for left Hopf algebroids in Definition \ref{chelabertaschen1}. The main statement in Lemma \ref{sofocle2} is then that there is an equivalence
$$
{}_{\scriptscriptstyle{U^*}}\mathbf{aYD}^{\scriptscriptstyle{U^*}} \simeq {}_\uhhu \mathbf{aYD}^{\scriptscriptstyle{\rm contra-}\uhhu}
$$
between the categories of (stable) aYD modules over $U^*$ and (stable) aYD contramodules over $U$. None of the above categories appears to be monoidal but both of them are module categories over the category of Yetter-Drinfel'd modules (at least in the finite case), see Proposition \ref{jetztnkaffee}.

\subsubsection{Cyclic and BV structures on $\Ext$}

In \S\ref{atacvantaggi}, we explicitly describe the structure of a cocyclic $k$-module on the complex we are interested in, which allows us to prove in
Theorem \ref{corsoumbertoI} and its Corollary \ref{pourquoilamuit} the following central result:

\begin{theorem}
\label{B}
Let $U$ be a left Hopf algebroid and let $M$ be a stable anti Yetter-Drinfel'd contramodule over $U$. 
Then the complex 
$$
C^\bull(U,M) := \Hom_\Aopp(U^{\otimes_\Aopp \raise1pt\hbox{$\bull$}}, M) 
$$ 
can be made into a cocyclic $k$-module
with cocyclic operator
$$
(\tau f)(u^1, \ldots, u^n) = \gamma\big(
u^1_+ f(u^2_+, \ldots, u^n_+, u^n_- \cdots u^1_-(-))\big).
$$
In particular, choosing the base algebra itself as coefficient module, $C(U,A)$ becomes a cyclic operad with multiplication and therefore the cohomology groups $H^\bull(U,A)$ (resp.\ $\Ext^\bull_U(A,A)$ if $U_\ract$ is projective) form
a Batalin-Vilkovisky algebra.
\end{theorem}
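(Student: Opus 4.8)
The plan is to proceed in two stages. First I would establish that $(C^\bull(U,M),\delta_\bull,\sigma_\bull,\tau)$ is a cocyclic $k$-module for an arbitrary stable anti Yetter-Drinfel'd contramodule $M$, the cosimplicial part being already known and only the cocyclic operator $\tau$ being new. Then I would specialise to $M=A$, identify the resulting cyclic operad with multiplication, and invoke the general principle \cite{Men:BVAACCOHA} that such an operad produces a Batalin-Vilkovisky algebra on cohomology. Although the formula for $\tau$ is motivated by transferring the cyclic structure on the complex computing $\Cotor^\bull_{U^*}(M,A)$ through the equivalence of Lemma \ref{sofocle2} and the isomorphism $\Cotor^\bull_{U^*}(M,A)\simeq\Ext^\bull_U(A,M)$, that route requires the finiteness hypothesis; since the present statement makes no such assumption, the actual proof should be a \emph{direct} verification of the cocyclic identities, valid in particular for the Hochschild case $U=\Ae$.

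For the first stage I would take the coface maps $\delta_i$ and codegeneracies $\sigma_i$ computing $H^\bull(U,M)$ as given and begin by checking that $\tau$ is well defined, i.e.\ that its right-hand side again lies in $\Hom_\Aopp(U^{\otimes_\Aopp n},M)$. This is where the contraaction $\gamma$ of Definition \ref{schoenwaers} enters: the expression $f(u^2_+,\ldots,u^n_+,u^n_-\cdots u^1_-(-))$, viewed as a function of the free slot $(-)\in U$, has to be an $A^{\rm op}$-linear map $U\to M$ so that $\gamma$ can be applied to it, and one must show that the leading factor $u^1_+$ together with the iterated $u^n_-\cdots u^1_-$ respect the $A$-balancing of the tensor powers. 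The defining (Schauenburg) identities of the translation map $u\mapsto u_+\otimes_\Aopp u_-$ of the left Hopf algebroid do exactly this bookkeeping.

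Next I would verify the interaction of $\tau$ with the simplicial operators, namely $\tau\delta_i=\delta_{i-1}\tau$ and $\tau\sigma_i=\sigma_{i-1}\tau$ for $i\ge 1$, together with the wrap-around relations $\tau\delta_0=\delta_n$ and $\tau\sigma_0=\sigma_n\tau^2$. Each of these reduces to a manipulation of the translation map together with the left $U$-action and the contraaction $\gamma$, and the module/contramodule compatibility imposed in Definition \ref{chelabertaschen1} is precisely what makes the two sides coincide. The genuinely difficult step, and the main obstacle, is the cyclicity $\tau^{n+1}=\id$: iterating $\tau$ produces a cascade of factors $u^i_\pm$ which must telescope, the anti Yetter-Drinfel'd compatibility lets one rewrite the accumulated action and contraaction, and it is the \emph{stability} condition that forces the remaining composite to collapse to the identity rather than to a nontrivial automorphism. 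I expect this identity to demand the most careful index-tracking and the heaviest repeated use of the Hopf-algebroid relations.

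For the second stage I would set $M=A$. The operad structure on $C^\bull(U,A)$ — the partial compositions $\circ_i$, the multiplication $\mu\in C^2(U,A)$ and its unit, from which the Gerstenhaber product and bracket are built — is the one already constructed in \cite{KowKra:BVSOEAT, Kow:BVASOCAPB}, so it remains only to check that the cocyclic operator of the first stage is compatible with these compositions, making $C(U,A)$ a cyclic operad with multiplication in the sense of \cite{GetKap:COACH, Men:BVAACCOHA}. Once these cyclic-operad axioms relating $\tau$ to the $\circ_i$ are confirmed, Menichi's theorem \cite{Men:BVAACCOHA} applies verbatim and equips $H^\bull(U,A)$ — hence $\Ext^\bull_U(A,A)$ whenever $U_\ract$ is projective — with a Batalin-Vilkovisky structure whose underlying Gerstenhaber algebra is the known one.
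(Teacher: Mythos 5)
Your proposal is correct in outline, and its second stage coincides with the paper's argument, but your first stage takes a genuinely different route from the paper. For the operadic part you do exactly what the paper does: the compositions \eqref{maxdudler1} with multiplication $\mu = \gve\, m_\uhhu$ are taken from the earlier work, the cyclic-operad identities \eqref{superfluorescent1} are verified directly against $\tau$ (the paper carries out the first one explicitly, using the Schauenburg relations and the contraassociativity \eqref{carrefour1}), and Menichi's Theorem \ref{holl} then produces the Batalin-Vilkovisky structure (this is Theorem \ref{corsoumbertoI}(i) together with Corollary \ref{pourquoilamuit}). For the cocyclic-module part, however, the paper does \emph{not} perform the direct verification you propose: it transfers the structure from the dual. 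Lemma \ref{sofocle1} equips $C^\bull_\co(U^*,M)$ with a (para-)cocyclic structure, Proposition \ref{zitty} shows --- under the hypothesis that $U_\ract$ is finitely generated $A$-projective --- that the operators \eqref{anightinpyongyang1} are conjugate to those on the dual complex via the isomorphism $\xi$ of \eqref{mondrian1}, that is $\tau = \xi \circ \tau' \circ \xi^{-1}$, and Corollary \ref{chelabertaschen2} then observes that the resulting para-cocyclic relations, being identities in the structure maps of $U$ and $M$ alone, persist without any finiteness assumption; stability enters only through \eqref{altberlin} to force $\tau^{n+1} = \id$, exactly as you predict (note, though, that the para-cocyclic relations themselves need only the compatible $A$-actions \eqref{romaedintorni}, not the full aYD condition \eqref{nawas1}, which is reserved for cyclicity). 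So your plan of a from-scratch verification is a legitimate alternative --- indeed it is what would make the paper's ``holds in full generality'' observation completely explicit, and it is the only route that literally never mentions $U^*$ --- but it foregoes what the transfer buys: the dual picture explains where the formula for $\tau$ comes from, reuses the right Hopf structure on $U^*$ (Theorem \ref{Hopfdual}) and the already-known cocyclic structure of Lemma \ref{sofocle1}, and confines the genuinely new computation to the single conjugation identity of Proposition \ref{zitty}, at the price of the finiteness scaffolding that is discarded at the end.
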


For a Hopf algebra $H$ over a commutative ring $k$, the contraaction $\gamma$ that appears in the above theorem is simply evaluation on the unit and one thereby recovers the BV algebra structure on $\Ext^\bull_H(k,k)$ that was given by Menichi \cite{Men:CMCMIALAM}.

We also give a version for more general coefficients in which $A$ in the second statement in the above theorem is replaced by a braided commutative Yetter-Drinfel'd algebra (see the second part of Theorem \ref{corsoumbertoI}), where we, however, have to assume a certain finiteness condition again.

\subsubsection{BV algebra structures on Hochschild cohomology of associative algebras}
The aforementioned Theorem \ref{B} can then be applied to the case of the Hopf algebroid $(U,A) = (\Ae,A)$ which controls Hochschild theory, and therefore yields statements on when the Hochschild cohomology of an associative algebra carries the structure of a BV algebra.
In \S\ref{scottex}, we give a sufficient condition for when this is the case:

\begin{theorem}
\label{B1}
Let $A$ be an associative $k$-algebra which is a contramodule over $\Ae$ with contraaction $\gamma$. Then
$$
(\tau f)(a_1, \ldots, a_n) = \gamma\big(a_1 f(a_2, \ldots, a_n, -) \big)
$$
defines a cocyclic operator on the Hochschild complex 
$
C^\bull(A,A) := \Hom_k(A^{\otimes \bull},A)
$ 
such that the respective endomorphism operad $C(A,A)$ becomes a para-cyclic operad with multiplication, which is cyclic if $A$ is stable over $\Ae$. Hence, its Hochschild cohomology groups $H^\bull(A,A)$ (resp.\  $\Ext^\bull_\Aee(A,A)$ if $A$ is $k$-projective) form
a Batalin-Vilkovisky algebra.
\end{theorem}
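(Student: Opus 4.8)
The plan is to realise the present statement as the specialisation of Theorem \ref{B} to the left Hopf algebroid $(U,A) = (\Ae, A)$ that governs Hochschild theory. First I would recall the left bialgebroid structure on $\Ae = A \otimes \Aop$ over $A$: source $s(a) = a \otimes 1$, target $t(b) = 1 \otimes b$, counit $\gve(a \otimes b) = ab$, and comultiplication determined by $a \otimes b \mapsto (a \otimes 1) \otimes_A (1 \otimes b)$. This is the prototypical left Hopf algebroid, and its translation map is
\[
(a \otimes b)_+ \otimes_\Aopp (a \otimes b)_- = (a \otimes 1) \otimes_\Aopp (1 \otimes b);
\]
in particular, on the generators $s(a) = a \otimes 1$ one has $s(a)_+ = a \otimes 1$ and $s(a)_- = 1$.

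Next I would fix the identification of complexes. As $\Ae$ is free as a right $A$-module via the target, restriction along $s$ furnishes the standard adjunction isomorphism $\Hom_\Aopp\big((\Ae)^{\otimes_\Aopp n}, A\big) \cong \Hom_k(A^{\otimes n}, A)$, under which a cochain $f$ corresponds to $\bar f(a_1, \ldots, a_n) = f(a_1 \otimes 1, \ldots, a_n \otimes 1)$; one checks that this matches $C^\bull(\Ae, A)$ with the Hochschild complex $C^\bull(A,A)$ together with its cosimplicial operators. It then remains a direct substitution to see that the cocyclic operator of Theorem \ref{B} collapses to the asserted formula: feeding the generators $u^i = s(a_i) = a_i \otimes 1$ into
\[
(\tau f)(u^1, \ldots, u^n) = \gamma\big(u^1_+ f(u^2_+, \ldots, u^n_+, u^n_- \cdots u^1_-(-))\big),
\]
the positive legs $u^i_+ = a_i \otimes 1$ act by left multiplication while every negative leg $u^i_- = 1$ collapses, leaving $(\tau f)(a_1, \ldots, a_n) = \gamma\big(a_1 f(a_2, \ldots, a_n, -)\big)$; here $\gamma$ is applied to the unique $\Aop$-linear extension $\Ae \to A$ of the map $c \mapsto a_1 f(a_2, \ldots, a_n, c)$.

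The subtle point is to explain why assuming only a contramodule structure (together with stability) suffices here, whereas Theorem \ref{B} demands a full stable anti Yetter-Drinfel'd contramodule. Here I would use that $A$ carries its canonical left $\Ae$-module structure $(a \otimes b) \cdot c = acb$, and verify that for this canonical action the anti Yetter-Drinfel'd compatibility with the contraaction $\gamma$ of Definition \ref{schoenwaers} is automatic, so that the only genuine additional datum is $\gamma$ itself. Re-running the argument of Theorem \ref{corsoumbertoI} with this data yields all the para-cocyclic relations from the module and contramodule structure alone, the single remaining cyclic identity $\tau^{n+1} = \id$ on cochains of degree $n$ being precisely the stability of $A$ over $\Ae$.

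Finally, the endomorphism operad $C(A,A)$ carries its cup-product multiplication, so the para-cocyclic structure makes it a para-cyclic operad with multiplication, which becomes a cyclic operad with multiplication once stability is imposed; the implication ``cyclic operad with multiplication $\Rightarrow$ BV algebra'' recorded in \S\ref{prelim} then equips $H^\bull(A,A)$, and hence $\Ext^\bull_\Aee(A,A)$ when $A$ is $k$-projective, with a Batalin-Vilkovisky structure. I expect the main obstacle to lie in the bookkeeping of the two middle paragraphs: transporting the cosimplicial operators faithfully across the adjunction isomorphism, and confirming that the anti Yetter-Drinfel'd compatibility is genuinely vacuous for the canonical $\Ae$-module $A$, so that the contraaction $\gamma$ and stability really are the only hypotheses needed.
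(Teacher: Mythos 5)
Your overall strategy --- specialising the bialgebroid machinery of Theorem \ref{B} to the left Hopf algebroid $(\Ae,A)$ --- is exactly the paper's route, and your identification of the complexes and the collapse of the cyclic operator on the generators $a_i\otimes 1$ match the paper's derivation of \eqref{russischeinfachlernen}. However, your third paragraph contains a genuine gap. You propose to keep the \emph{canonical} left $\Ae$-action $(a\otimes b)\cdot c = acb$ on $A$ and to ``verify that for this canonical action the anti Yetter-Drinfel'd compatibility with the contraaction $\gamma$ is automatic.'' This verification fails: the compatibility \eqref{romaedintorni} demands that the left $A$-action induced by the contraaction via \eqref{alleskleber}, namely $am=\gamma\big(m\gve(-\bract a)\big)$, coincide with left multiplication by $a$, and this is an honest extra condition on $\gamma$, not a tautology. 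The paper's Frobenius example (Lemma \ref{dasnochhier}) shows it can fail: there the contraaction is left $\Ae$-linear only for the $\gs$-twisted action $(a\otimes b)c=\gs(a)cb$, with $\gs$ the Nakayama automorphism, so for a non-symmetric Frobenius algebra your claimed verification is false. The paper's resolution (see the discussion around \eqref{hofgarten1824}) is precisely the opposite of what you propose: one \emph{defines} the left $\Ae$-module structure on $A$ to be the one induced by $\gamma$ --- which need not be $(a\otimes b)\cdot c = acb$ --- whereupon \eqref{romaedintorni} holds by construction and the aYD condition \eqref{nawas1} then follows automatically from \eqref{carrefour4} and \eqref{passionant}. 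With this reading, $a_1 f(a_2,\ldots,a_n,-)$ in the statement means the induced action of $a_1\otimes 1$, and stability is likewise stability for the induced action, as made explicit in Corollary \ref{stehtdranne}. As written, your argument would only prove the theorem under the additional (and unnecessary) hypothesis that the induced left action is left multiplication.

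A second, smaller error: your translation map for $\Ae$ is wrong. The correct formula \eqref{umbria2} reads $(a\otimes b)_+\otimes_\Aopp (a\otimes b)_- = (a\otimes 1)\otimes_\Aopp(b\otimes 1)$, not $(a\otimes 1)\otimes_\Aopp(1\otimes b)$; your version violates, for instance, $u_+u_-=s^\ell(\gve(u))$ from \eqref{Sch7}, since $(a\otimes 1)(1\otimes b)=a\otimes b\neq ab\otimes 1$ in general. This slip happens to be harmless for the rest of your computation, because you only evaluate the translation map on elements of the form $a\otimes 1$, where both formulas give $u_-=1$; but it should be corrected.
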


As discussed in \S\ref{castelnuovo1} and \S\ref{castelnuovo3}, examples of when such a contraaction exists are given by symmetric algebras, or, more generally, by Frobenius algebras with semisimple (diagonalisable) Nakayama automorphism, recovering the aforementioned results of Menichi \cite{Men:BVAACCOHA}, Tradler \cite{Tra:TBVAOHCIBIIP}, 
Lambre-Zhou-Zimmermann \cite{LamZhoZim:THCROAFAWSSNAIABVA} as well as \cite{Vol:BVDOHCOFA}.

How to find a contraaction on (twisted) Calabi-Yau algebras so as to also recover the results in \cite{Gin:CYA} and \cite{KowKra:BVSOEAT} appears to be more intricate and will be deferred to a separate publication, in which we also plan to find new examples and to include further aspects like Koszul duals as in \cite{CheYanZho:BVAATNPDOKCYA}. 
The most important question to clarify would be whether one can classify contraactions $\gamma$ for a given
algebra $A$, and in particular whether the existence of such a map is
implied by or even equivalent to already known concepts on associative algebras, or whether this leads to a new class of algebras instead.

\bigskip

\thanks{ {\bf Acknowledgements.}   \!
With great pleasure, we would like to thank 
 Tomasz Brzezi\'nski, Domenico Fiorenza, 
 Laiachi El Kaoutit, 
Ulrich Kr\"ahmer, Boris Tsygan, and the referee
 for stimulating discussions and precious comments.

\section{Preliminaries}
\label{prelim}

In this preliminary section, we gather most of the basic (algebraic) ideas we are going to use, and also fix some notation.
Let $k$ be a commutative ground ring (most of the time of characteristic zero), and as always let an unadorned tensor product be meant over $k$.

\subsection{Cyclic operads and Batalin-Vilkovisky algebras}

The main point in this subsection is given by Theorem \ref{holl} below, which establishes a relationship between Gerstenhaber algebras and operads with multiplication resp.\ Batalin-Vilkovisky algebras and cyclic operads with multiplication, which is the fundamental result underlying our entire considerations. Let us define the required ingredients first.

\begin{dfn}
\label{fameancora}
\
\begin{enumerate}
\compactlist{99}
\item
A {\em Gerstenhaber algebra} 
over $k$ is given by a triple $\big(V, \smallsmile, \{\cdot,\cdot\}\big)$, where 
$V=\bigoplus_{p \in \mathbb{N}} V^p$ is a graded commutative $k$-algebra with multiplication
$ \ga \smallsmile \gb=(-1)^{pq}\gb \smallsmile \ga \in V^{p+q}$ for $\ga \in V^p,\gb \in V^q$, 
along with a graded Lie bracket
$$
        \{\cdot,\cdot\} : V^{p+1} \otimes_k V^{q+1} \rightarrow V^{p+q+1}
$$ 
on the desuspension 
$
        V[1]:=\bigoplus_{p \in \mathbb{N}} V^{p+1},
$
for which all operators $\{\gamma,\cdot\}$ satisfy the graded Leibniz rule
$$
        \{\gamma,\ga \smallsmile \gb\}=
        \{\gamma,\ga\} \smallsmile \gb + (-1)^{pq} \ga \smallsmile
        \{\gamma,\gb\},
$$
for $\gamma \in V^{p+1}$ and $\ga \in V^q$. 
\item
A {\em Batalin-Vilkovisky} algebra is a Gerstenhaber algebra $V$ with a $k$-linear differential
$
B: V^n \to V^{n-1} 
$
of degree $-1$
such that for all $\ga \in V^p$, $\gb \in V$
$$
\{\ga, \gb\} = (-1)^{p}\big(B(\ga \smallsmile \gb) - B(\ga) \smallsmile \gb - (-1)^p \ga \smallsmile B(\gb) \big)
$$
holds.
\end{enumerate}
\end{dfn}
A Batalin-Vilkovisky algebra may also be called {\em exact} Gerstenhaber algebra and $B$ is sometimes said to {\em generate} the Gerstenhaber bracket.  

Turning to the next ingredient we are going to use, note that in all what follows the term ``(cyclic) operad'' always refers to a non-$\gS$ (cyclic) operad in the category of $k$-modules in the sense given right below. See, for example, \cite{LodVal:AO, Mar:MFO} for more information on operads, and \cite{GetKap:COACH, MarShnSta:OIATAP, Men:BVAACCOHA} 
for cyclic ones.

\pagebreak

\begin{definition}
\
\label{moleskine}
\begin{enumerate}
\compactlist{99}
\item
A (non-$\gS$) {\em operad} in the category of $k$-modules is a sequence $\{O(n)\}_{n \geq 0}$ of $k$-modules with an identity element 
$\mathbb{1} \in O(1)$ together with $k$-bilinear operations 
$$
\circ_i: O(p) \otimes O(q) \to O(p+q-1)
$$
subject to
\begin{eqnarray}
\label{danton}
\nonumber
\gvf \circ_i \psi &=& 0 \qquad \qquad \qquad \qquad \qquad \! \mbox{if} \ p < i \quad \mbox{or} \quad p = 0, \\
(\varphi \circ_i \psi) \circ_j \chi &=& 
\begin{cases}
(\varphi \circ_j \chi) \circ_{i+r-1} \psi \qquad \mbox{if} \  \, j < i, \\
\varphi \circ_i (\psi \circ_{j-i +1} \chi) \qquad \hspace*{1pt} \mbox{if} \ \, i \leq j < q + i, \\
(\varphi \circ_{j-q+1} \chi) \circ_{i} \psi \qquad \mbox{if} \ \, j \geq q + i,
\end{cases} \\
\nonumber
\gvf \circ_i \mathbb{1} &=& \mathbb{1} \circ_i \gvf \ \ = \ \ \gvf \! \qquad \quad \qquad \mbox{for} \ i \leq p,   
\end{eqnarray}
for any $\varphi \in O(p), \ \psi \in O(q)$, and $\chi \in O(r)$. 
The operad is called {\em with multiplication} if there exists an {\em operad multiplication} $\mu \in O(2)$ and a {\em unit} $e \in O(0)$ such that 
$ 
\mu \circ_1 \mu 
 = \mu \circ_2 \mu
$
and  
$
\mu \circ_1 e 
= 
\mu \circ_2 e
= \mathbb{1}
$ 
holds.
\item
A {\em cyclic} operad is a (non-$\gS$) operad $O$ equipped with $k$-linear maps 
$$
\tau_n: O(n) \to O(n)
$$ 
subject to
\begin{equation}
\label{superfluorescent1}
\begin{array}{rcll}
\tau (\gvf \circ_1 \psi) &=& \tau \psi \circ_q \tau \gvf, & \mbox{if} \ 1 \leq p, q, \\
\tau (\gvf \circ_i \psi) &=& \tau \gvf \circ_{i-1} \psi, & \mbox{if}  \ 0 \leq q \ \mbox{and} \  2 \leq i \leq p, \\
\tau \mathbb{1} &=& \mathbb{1} & \\
\tau^{n+1} &=& \id_{O(n)}, & 
\end{array}
\end{equation}
for every $\gvf \in O(p)$ and $\psi \in O(q)$. 
In case the last equation is not fulfilled, one also speaks (in analogy to cyclic homology) of a {\em para-}cyclic operad.
A {\em cyclic operad with multiplication} is both a cyclic operad and an operad with multiplication $\mu$ such that
$
\tau \mu = \mu.
$
\end{enumerate}
\end{definition}

The first part of the following well-known useful result is due to \cite{Ger:TCSOAAR, GerSch:ABQGAAD, McCSmi:ASODHCC} and possibly others, whereas its enhanced second part appeared in \cite[Thm.~1.4]{Men:BVAACCOHA}:

\begin{theorem}
\label{holl}
\
\begin{enumerate}
\compactlist{99}
\item
Any operad with multiplication defines a cosimplicial $k$-module the cohomology of which carries the structure of a Gerstenhaber algebra.
\item
Any cyclic operad with multiplication 
defines a cocyclic $k$-module the (simplicial) cohomology of which carries the structure of a Batalin-Vilkovisky algebra. 
\end{enumerate}
\end{theorem}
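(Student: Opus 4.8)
The plan is to manufacture the required (co)simplicial data directly from the operadic structure and then feed it into the standard homological machinery that produces the Gerstenhaber and Batalin--Vilkovisky brackets. \emph{Part (i).} First I would turn the operad with multiplication $(O,\mu,e)$ into a cosimplicial $k$-module by placing $O(n)$ in degree $n$ and defining cofaces and codegeneracies by
$$
\delta_0\gvf = \mu \circ_2 \gvf, \quad \delta_i\gvf = \gvf \circ_i \mu \ \ (1 \le i \le n), \quad \delta_{n+1}\gvf = \mu \circ_1 \gvf, \quad \gs_i\gvf = \gvf \circ_{i+1} e \ \ (0 \le i \le n-1),
$$
for $\gvf \in O(n)$. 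The cosimplicial identities then reduce to the three composition cases in \rmref{danton}, the associativity $\mu \circ_1 \mu = \mu \circ_2 \mu$, and the unit relations $\mu \circ_1 e = \mu \circ_2 e = \mathbb{1}$; each is a direct substitution.

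On the associated cochain complex, with differential $\delta = \sum_{i=0}^{n+1}(-1)^i\delta_i$, I would define the cup product of $\gvf \in O(p)$ and $\psi \in O(q)$ by $\gvf \smallsmile \psi = (\mu \circ_1 \gvf)\circ_{p+1}\psi$, the brace $\gvf\,\bar\circ\,\psi = \sum_{i=1}^{p}(-1)^{(q-1)(i-1)}\gvf \circ_i \psi$, and hence the bracket $\{\gvf,\psi\} = \gvf\,\bar\circ\,\psi - (-1)^{(p-1)(q-1)}\psi\,\bar\circ\,\gvf$. That $\delta$ is a derivation of $\smallsmile$ up to an explicit homotopy (so that $\smallsmile$ is associative and graded commutative on cohomology) and that $\{\cdot,\cdot\}$ descends to a graded Lie bracket obeying the Leibniz rule of Definition \ref{fameancora}\,(i) is exactly Gerstenhaber's original computation recast operadically; I would verify the relevant chain-level identities from \rmref{danton} and cite \cite{Ger:TCSOAAR, GerSch:ABQGAAD, McCSmi:ASODHCC}.

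\emph{Part (ii).} Given in addition the cyclic operators $\tau_n$ with $\tau\mu = \mu$ and $\tau^{n+1} = \id$, I would check that they satisfy Connes' compatibility relations with the above faces and degeneracies, so that $O$ becomes a \emph{cocyclic} $k$-module. Indeed, the first two lines of \rmref{superfluorescent1} unwind (using $\tau\mu = \mu$) to $\tau\delta_i = \delta_{i-1}\tau$ for $1 \le i \le n+1$ and $\tau\delta_0 = \delta_{n+1}$, together with the analogous codegeneracy relations, while $\tau^{n+1} = \id$ supplies the cyclicity axiom. From any cocyclic module one then has Connes' operator $B = (\id - \tau)\,s\,N$ of degree $-1$, with norm $N = \sum_{j=0}^{n}\tau^j$ and extra degeneracy $s$, satisfying $B^2 = 0$ and $\delta B + B\delta = 0$; hence $B$ descends to a degree $-1$ differential on $H^\bull$.

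The heart of the matter, and the step I expect to be the main obstacle, is the BV identity of Definition \ref{fameancora}\,(ii): that this $B$ \emph{generates} the Gerstenhaber bracket. This is not formal. It requires an explicit chain-level identity rewriting $B(\gvf \smallsmile \psi) - B\gvf \smallsmile \psi - (-1)^{|\gvf|}\gvf \smallsmile B\psi$ in terms of the brace operations modulo a coboundary, which itself rests on a delicate interplay between $\tau$, the extra degeneracy $s$, and the relations \rmref{danton}. This is precisely the content of \cite[Thm.~1.4]{Men:BVAACCOHA}, whose homotopy computation I would reproduce; passing to cohomology then yields the asserted Batalin--Vilkovisky structure.
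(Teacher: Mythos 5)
Your proposal is correct and takes essentially the same route as the paper: the paper states this theorem as a known result without proof, citing Gerstenhaber, Gerstenhaber--Schack and McClure--Smith for part (i) and Menichi's Theorem~1.4 for part (ii), and your construction of the cofaces/codegeneracies from $\mu$ and $e$, the cup product and braces, and the deferral of the generating identity for $B$ to Menichi's chain-level homotopy computation is precisely the content of those references. The only point to flag is that the codegeneracy relation $\tau\gs_0 = \gs_{n-1}\tau^2$ is not an instance of the two displayed cyclic-operad axioms (the case $i=1$, $q=0$ is covered by neither), so it requires the separate verification carried out in Menichi's proof, which you are in any case reproducing.
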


\subsection{Left and right Hopf algebroids}
The third fundamental concept on which our results are based, is the surprisingly powerful notion of bialgebroids and Hopf algebroids.
\subsubsection{$\Ae$-rings}
Let $A$ and $U$ be
(unital associative) $k$-algebras. Assume that there is 
a fixed $k$-algebra map
$
        \eta : \Ae := A \otimes \Aop \rightarrow 
        U.
$
This map induces forgetful functors 
$$
\umod \rightarrow \amoda,
\quad
\modu \rightarrow \amoda
$$
from the category of left resp.\ right $U$-modules to the category of $A$-bimodules, that is, every $N \in \umod$ resp.\ $M \in \modu$ becomes an $A$-bimodule via
\begin{equation}
\label{pergolesi}
        a \lact n \ract b:=\eta(a \otimes b)n,\quad
        a \blact m \bract b:=m \eta(b \otimes a),\quad
        a,b \in A,n \in N,m \in M.
\end{equation}
This in particular applies to $U$ itself, that is, 
left and right multiplication in $U$ 
define $A$-bimodule structures of both these types on 
$U$ itself, and this defines two morphisms
$
        s: A \rightarrow U, 
\ 
s(a):=\eta (a \otimes 1)$,
and
$
t: \Aop \rightarrow U, 
\
        t(b):=\eta (1 \otimes b)
$ 
for $a, b \in A$,
the {\em source} resp.\ {\em target} map of the pair $(U,A)$.

\subsubsection{Left and right bialgebroids} 
Recall then from \cite{Tak:GOAOAA} that a {\em left bialgebroid} is a sextuple $(U,A,s^\ell,t^\ell,\Delta_\ell, \gve)$, abbreviated $(U,A)$, which adds to the data of the $\Ae$-ring $(U,A, s^\ell, t^\ell)$ introduced above two $A$-bimodule maps with respect to the $A$-bimodule structure $\due U \lact \ract$, the {\em left coproduct}
$
\Delta_\ell: U \to \due U {} \ract  \otimes_\ahha \due U \lact {}, \ u \mapsto u_{(1)} \otimes_\ahha u_{(2)}, 
$
where we use the common Sweedler subscript notation (with summation understood),
and the  {\em left counit} $\gve: U \to A$ such that $(U, \gD_\ell, \gve)$ becomes a counital $A$-coring, which means that one has a couple of compatibility conditions that need more technical attention than those for bialgebras as the base ring $A$ is in general noncommutative; see, {\em e.g.}, \cite[Def.~3.3]{Boe:HA}. For example, the counitality axioms read as $\gve(u_{(1)}) \lact u_{(2)} = u = u_{(1)} \ract \gve(u_{(2)})$. As in the bialgebra case, one also wants the comultiplication to be a morphism of the multiplication in the sense of $\gD(uv) = \gD(u)\gD(v)$, and in order to give this equation a well-defined sense, the coproduct has to corestrict to a map $U \to U \times_\ahha U$, where $U \times_\ahha U$ is the {\em Sweedler-Takeuchi product}, that is, the $\Ae$-submodule of  $U \otimes_\ahha U$
defined by
%
%
%
\begin{equation}
\label{tellmemore}
U \times_\ahha   U   := 
     \big\{ {\textstyle \sum_i} u_i \otimes u'_i \in U_\ract  \otimes_{\scriptscriptstyle A}  {}_\lact U \mid {\textstyle \sum_i} (a \blact u_i) \otimes u'_i = {\textstyle \sum_i} u_i \otimes (u'_i \bract a), \ \forall a \in A \big\},
\end{equation}
which in contrast to $U \otimes_\ahha U$ becomes an $\Ae$-ring by factorwise multiplication. Also, 
\begin{equation}
\label{alsodoch}
\gve(a \blact u) = \gve(u \bract a), \qquad \gve(uv) = \gve(u \bract \gve(v))
\end{equation}
for the counit, for all $u, v \in U$ and $a \in A$.

\medskip

A {\em right bialgebroid} \cite[\S2]{KadSzl:BAODTEAD}  
is again a sextuple $(V,B, s^r, t^r, \gD_r, \pl)$ formed by a $\Be$-ring $(V,B, s^r, t^r)$ together with two $B$-bimodule maps but this time with respect to the $B$-bimodule structure $\due V \blact \bract$, the  
{\em right coproduct} 
$\Delta_r: V \to \due V {} \bract  \otimes_\behhe \due V \blact {}, \ v \mapsto v^{(1)} \otimes_\behhe v^{(2)}$, where we use the less common Sweedler {\em super\/}script notation (with summation understood),
and the {\em right counit} $\pl: V \to B$, subject to certain compatibility conditions which are opposite to those of a left bialgebroid. Indeed, the {\em opposite} $( U^\op, A, t^\ell, s^\ell, \Delta_\ell, \epsilon )$ of a left bialgebroid
$( U, A, s^\ell, t^\ell, \Delta_\ell, \epsilon )  $  is a right  bialgebroid, and from this one can easily deduce the explicit axioms for a right bialgebroid.

Both notions of left and right bialgebroid generalise bialgebras (the case of which is recovered by taking $A=k$ resp.\ $B=k$); see, for example, \cite[\S2]{Kow:HAATCT} for an overview.

\subsubsection{Left and right Hopf algebroids}
\label{gamberetti}
Following Schauenburg's definition  \cite{Schau:DADOQGHA}, we say that a left bialgebroid $(U,A)$ 
is a {\em left Hopf algebroid} if 
the  {\em Hopf-Galois map}
\begin{equation}
\label{ilariabianchi1}
\ga_\ell: \due U \blact {} \otimes_{\Aopp} U_\ract \to U_\ract  \otimes_\ahha  \due U \lact,
\quad 
u \otimes_\Aopp u'  \mapsto  u_{(1)} \otimes_\ahha u_{(2)}  u'
\end{equation}
is bijective. In this case, one can define a so-called {\em translation map} for which we introduce the Sweedler notation 
$
u_+ \otimes_\Aopp u_-  :=  \ga^{-1}_\ell(u \otimes_\ahha 1).
$
In case $A=k$ is central in $U$, the map $\ga_\ell$ is invertible if and only if $U$ is a Hopf algebra, and one has
 $  u_+ \otimes u_-  :=  u_{(1)} \otimes S(u_{(2)})  $, where $S$ is the antipode of the Hopf algebra.


Of course, \eqref{ilariabianchi1} is not the only possible Hopf-Galois map that can be defined for a left bialgebroid: 
the map 
$
\ga_r: U_{\!\bract}  \otimes_\ahha \! \due U \lact {} \to U_{\!\ract}  \otimes_\ahha  \due U \lact {}, \ u \otimes_\ahha v  \mapsto  u_{(1)}  v \otimes_\ahha u_{(2)}
$ 
is another one, and if this map is invertible, the left bialgebroid $(U,A)$ is called a {\em right} Hopf algebroid.

We refer to, for example,  \cite[Prop.\ 4.2]{BoeSzl:HAWBAAIAD} 
and \cite[\S2.3]{CheGavKow:DFOLHA} and references therein for further details and an explanation of the terminology. 

Of course, the notions of left and right Hopf algebroid also exist if the underlying bialgebroid is a right instead of a left one. Since we are going to deal with the dual of a left bialgebroid (which is a right bialgebroid), we will need (one of) these concepts as well:
we say that the right bialgebroid $(V,B)$ is a {\em right Hopf algebroid} if the Hopf-Galois map 
\begin{equation}
\label{ilariabianchi2}
\gb: \due V \blact {} \otimes_{\Bopp} V_\ract \to V_\bract  \otimes_\behhe  \due U \blact {},
\quad 
v' \otimes_\Bopp v  \mapsto v' v^{(1)} \otimes_\behhe v^{(2)} 
\end{equation}
is bijective. Similarly as before, we define the corresponding translation map by
\begin{equation}
\label{ilariabianchi3}
v^- \otimes_\Bopp v^+  :=  \gb^{-1}(1 \otimes_\behhe v),
\end{equation}
and we abbreviate $\tilde\gb^{-1} := \gb^{-1}(1 \otimes_\behhe -)$.
Again, in case $B=k$ is central in $V$, the map $\gb$ is invertible if and only if $V$ is a Hopf algebra, and one has
 $  v^- \otimes v^+  :=  S(v_{(1)}) \otimes v_{(2)}$. 

\begin{rem}
\label{evenmoreconfusion}
This latter definition might now lead to slight confusion in terminology as saying ``left/right Hopf algebroid'' does not specify whether the underlying bialgebroid is left or right, whereas ``left/right Hopf algebroid for a left/right bialgebroid'' appears way too clumsy. However, in the following we will {\em always} mean by ``left Hopf algebroid'' as having an underlying left bialgebroid structure, and by ``right Hopf algebroid'' an underlying right one.
We also want to stress that interchanging left and right here is more than a pure exercise in chirality yoga: as mentioned before, this determines the monoidality of the respective category of modules.
\end{rem}

The following lemma collects some useful properties for the so-defined translation maps for left and right bialgebroids (see \cite[Prop.~3.7]{Schau:DADOQGHA} for the first part and \cite[Lem.~2.14]{BoeSte:CCOBAAVC} for the second):

\begin{lem}
\quad
\begin{enumerate}
\compactlist{99}
\item Let $(U,A)$ be a left Hopf algebroid over an underlying left bialgebroid. Then the following relations hold:
\begin{eqnarray}
\label{Sch1}
u_+ \otimes_\Aopp  u_- & \in
& U \times_\Aopp U,  \\
\label{Sch2}
u_{+(1)} \otimes_\ahha u_{+(2)} u_- &=& u \otimes_\ahha 1 \quad \in U_{\!\ract} \! \otimes_\ahha \! {}_\lact U,  \\
\label{Sch3}
u_{(1)+} \otimes_\Aopp u_{(1)-} u_{(2)}  &=& u \otimes_\Aopp  1 \quad \in  {}_\blact U \! \otimes_\Aopp \! U_\ract,  \\
\label{Sch4}
u_{+(1)} \otimes_\ahha u_{+(2)} \otimes_\Aopp  u_{-} &=& u_{(1)} \otimes_\ahha u_{(2)+} \otimes_\Aopp u_{(2)-},  \\
\label{Sch5}
u_+ \otimes_\Aopp  u_{-(1)} \otimes_\ahha u_{-(2)} &=&
u_{++} \otimes_\Aopp u_- \otimes_\ahha u_{+-},  \\
\label{Sch6}
(uv)_+ \otimes_\Aopp  (uv)_- &=& u_+v_+ \otimes_\Aopp v_-u_-,
\\
\label{Sch7}
u_+u_- &=& s^\ell (\varepsilon (u)),  \\
\label{Sch8}
\varepsilon(u_-) \blact u_+  &=& u,  \\
\label{Sch9}
(s^\ell (a) t^\ell (b))_+ \otimes_\Aopp  (s^\ell (a) t^\ell (b) )_-
&=& s^\ell (a) \otimes_\Aopp s^\ell (b),
\end{eqnarray}
where in  \eqref{Sch1}  we mean the Takeuchi-Sweedler product
\begin{equation*}
\label{petrarca}
   U \! \times_\Aopp \! U   :=
   \big\{ {\textstyle \sum_i} u_i \otimes v_i \in {}_\blact U  \otimes_\Aopp  U_{\!\ract} \mid {\textstyle \sum_i} u_i \ract a \otimes v_i = {\textstyle \sum_i} u_i \otimes a \blact v_i, \ \forall a \in A \big\}.
\end{equation*}
\item
Let $(V,B)$ be a right Hopf algebroid over an underlying right bialgebroid. Then one has:
\begin{eqnarray}
\label{Rch1}
v^- \otimes_\Bopp  v^+ & \in
& V \times_{\Bopp} V,  \\
\label{Rch2}
v^-v^{+(1)} \otimes_\behhe v^{+(2)}  &=& 1 \otimes_\behhe v \quad \in V_{\!\bract} \! \otimes_\behhe \! {}_\blact V,  \\
\label{Rch3}
v^{(1)}v^{(2)-} \otimes_\Bopp v^{(2)+}  &=& 1 \otimes_\Bopp v \quad \in \due V \blact {} \!
\otimes_\Bopp \! \due V {} \ract,  \\
\label{Rch4}
v^{(1)-} \otimes_\Bopp v^{(1)+} \otimes_\behhe v^{(2)} &=& 
v^{-} \otimes_\Bopp v^{+(1)} \otimes_\behhe v^{+(2)},  
\\
\label{Rch5}
v^{-(1)} \otimes_\behhe v^{-(2)} \otimes_\Bopp v^{+} &=& 
v^{+-} \otimes_\behhe v^{-} \otimes_\Bopp v^{++},  
\\
\label{Rch6}
(vw)^- \otimes_\Bopp (vw)^+ &=& w^-v^- \otimes_\Bopp v^+w^+,  \\
\label{Rch7}
v^-v^+ &=& s^r (\pl (v)),  \\
\label{Rch8}
v^+ \ract \pl(v^-)   &=&  v,  \\
\label{Rch9}
(s^r (b) t^r (b'))^- \otimes_\Bopp (s^r (b) t^r (b') )^+
&=& s^r(b') \otimes_\Bopp s^r(b),
\end{eqnarray}
where in  \eqref{Rch1}  we mean the Sweedler-Takeuchi product
\begin{equation*}  
\label{petrarca2}
   V \times_\Bopp V   :=
   \big\{ {\textstyle \sum_i} v_i \otimes  w_i \in V \otimes_\Bopp  V  \mid {\textstyle \sum_i} v_i \ract b \otimes w_i = {\textstyle \sum_i} v_i \otimes b \blact w_i,  \ \forall b \in B  \big\}.
\end{equation*}
\end{enumerate}
\end{lem}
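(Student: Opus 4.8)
The plan is to treat the whole lemma as a bookkeeping consequence of the single fact that $\ga_\ell$ (respectively $\gb$ in part (ii)) is bijective, with the translation map encoding its inverse. Since $\ga_\ell(u \otimes_\Aopp u') = u_{(1)} \otimes_\ahha u_{(2)} u'$ and $u_+ \otimes_\Aopp u_- := \ga_\ell^{-1}(u \otimes_\ahha 1)$, identity \eqref{Sch2} is just $\ga_\ell \circ \ga_\ell^{-1} = \id$ read off as $\ga_\ell(u_+ \otimes_\Aopp u_-) = u_{+(1)} \otimes_\ahha u_{+(2)} u_- = u \otimes_\ahha 1$. Identity \eqref{Sch3} then follows from it by computing the inverse explicitly: using \eqref{Sch2} one verifies $\ga_\ell(u_{(1)+} \otimes_\Aopp u_{(1)-}u_{(2)}) = u_{(1)} \otimes_\ahha u_{(2)} = \ga_\ell(u \otimes_\Aopp 1)$, whence injectivity of $\ga_\ell$ yields the claim. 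These two relations form the backbone from which everything else is extracted.

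From here I would derive the remaining algebraic identities by applying $\ga_\ell$ --- which is injective --- and reducing each claim to a verification after the map, using only the bialgebroid axioms. For the anti-multiplicativity \eqref{Sch6} one applies $\ga_\ell$ to the proposed right-hand side $u_+v_+ \otimes_\Aopp v_-u_-$, uses that $\Delta_\ell$ is an algebra map together with \eqref{Sch2} for $v$ and then for $u$, and lands on $uv \otimes_\ahha 1 = \ga_\ell((uv)_+ \otimes_\Aopp (uv)_-)$. The counit relations \eqref{Sch7} and \eqref{Sch8} come from composing \eqref{Sch2} (resp.\ \eqref{Sch3}) with the counit in one tensor leg and invoking counitality $\gve(u_{(1)}) \lact u_{(2)} = u$; for instance $u_+u_- = \gve(u_{+(1)}) \lact u_{+(2)}u_- = \gve(u) \lact 1 = s^\ell(\gve(u))$. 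Identity \eqref{Sch9} on source--target products is the special case obtained by feeding $s^\ell(a)t^\ell(b)$ through the same computation, using $\Delta_\ell(s^\ell(a)t^\ell(b)) = s^\ell(a) \otimes_\ahha t^\ell(b)$ and the fact that the translation map of elements in the image of $s^\ell,t^\ell$ is forced by \eqref{Sch2}. The coassociativity relations \eqref{Sch4} and \eqref{Sch5} follow by applying $\Delta_\ell$ in the appropriate leg of the defining relation, using coassociativity and then injectivity of $\ga_\ell$ (applied in a single factor) to cancel it.

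It remains to settle the Takeuchi product membership \eqref{Sch1} and the second, dual half of the statement. For \eqref{Sch1} I would verify the balancing relation $u_+ \ract a \otimes_\Aopp u_- = u_+ \otimes_\Aopp a \blact u_-$ directly from the $A$-linearity of $\ga_\ell^{-1}$ and the fact that $u \otimes_\ahha 1$ already satisfies the corresponding balancing in the target; this is exactly where the compatibility of the various $A$-actions is used. For part (ii), the cleanest route is to observe that the right Hopf algebroid structure on $(V,B)$ and its Hopf-Galois map $\gb$, with $v^- \otimes_\Bopp v^+ := \gb^{-1}(1 \otimes_\behhe v)$, are the mirror image of $\ga_\ell$ under passage to the opposite/co-opposite bialgebroid, which interchanges left and right and turns \eqref{Sch1}--\eqref{Sch9} into \eqref{Rch1}--\eqref{Rch9}; alternatively, one repeats the arguments above verbatim with $\gb$ and the superscript Sweedler notation in place of the subscript one.

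The main obstacle is not any individual identity but the disciplined tracking of the four distinct $A$-module structures $\lact, \ract, \blact, \bract$ and of the base ($A$ versus $\Aop$) over which each tensor product is formed, so that every expression is well-defined and each application of $\ga_\ell$ or $\ga_\ell^{-1}$ occurs in the correct tensor leg. Concretely I expect \eqref{Sch1}, \eqref{Sch4}, and \eqref{Sch5} to demand the most care, since there the well-definedness over the Takeuchi product and the placement of the coproduct in the right factor are both at stake; the remaining relations are then essentially formal once \eqref{Sch2} and \eqref{Sch3} are in hand.
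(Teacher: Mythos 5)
Your proposal is correct and, in substance, is exactly the proof the paper relies on: the paper itself gives no argument for this lemma but refers to \cite[Prop.~3.7]{Schau:DADOQGHA} for part (i) and \cite[Lem.~2.14]{BoeSte:CCOBAAVC} for part (ii), and the proofs there proceed just as you outline --- \eqref{Sch2} and \eqref{Sch3} are the two composition identities for $\ga_\ell$ and its inverse, the remaining relations follow by applying $\ga_\ell$ (or a counit/coproduct in a suitable tensor leg) to both sides and invoking injectivity, the Takeuchi membership \eqref{Sch1} follows by pushing the balancing condition through $\ga_\ell$, and part (ii) is the op/coop mirror image of part (i). The only spots where your one-line dispatches compress genuine work are \eqref{Sch5}, where the injective map one cancels must act on the first and third legs simultaneously (these sandwich the middle leg, so it is not an application of $\ga_\ell$ ``in a single factor''), and \eqref{Sch8}, which needs, besides counitality, also the Takeuchi corestriction property of $\gD_\ell$ and the counit identity $\gve(uv)=\gve(u \bract \gve(v))$ to identify the resulting expression with $\gve(u_-)\blact u_+$; both points are routine within the method you describe, so they are matters of detail rather than gaps.
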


\medskip

\pagebreak

\subsection{Modules, comodules, and contramodules}
\subsubsection{Modules and comodules over bialgebroids}
We shall not discuss all details here of modules and comodules over bialgebroids as they have been written many times in the literature, see, for example, \cite{Boe:HA} for an overview. 


However, we want to repeat that --- in contrast to bialgebras --- the category $\modu$ of right modules over a left bialgebroid $U$ is in general {\em not} monoidal, whereas the category of left modules $\umod$ is so; they same holds with left and right interchanged for right bialgebroids. 

In \cite[Prop.\ 3.1.1]{CheGavKow:DFOLHA}, we listed a multitude of $U$-module structures over $\Hom$-spaces and tensor products of $U$-modules, two of which will be important in what follows: first, for $M \in \modu$ and $N \in \umod$, their tensor product $M \otimes_\Aopp N$ is again a right $U$-module by means of a sort of adjoint action
\begin{equation}
\label{lemma3}
(m \otimes_\Aopp n)u := mu_+ \otimes_\Aopp u_-n 
\end{equation}
for $m \in M$, $n \in N$, $u \in U$. Another crucial observation for our subsequent considerations is that for $M, M' \in \umod$,
the $\Ae$-module $\Hom_{\Aopp}(M,M') $ carries a sort of transposed adjoint action if $U$ is a left Hopf algebroid, that is, 
by
\begin{equation}
\label{gianduiotto1}
(u \rightslice f)(m)  :=  u_+ \big( f(u_-m) \big)
\end{equation}
one obtains a left  $ U $-module structure on $\Hom_{\Aopp}(M,M') $. In particular, if $M=U$ and $M'=A$, then \eqref{gianduiotto1} reads 
\begin{equation}
\label{gianduiotto1a}
(u \rightslice \phi)(v)  :=  \gve\big( \phi(u_-v) \blact u_+ \big)
\end{equation}
for $u,v \in U$ and $\phi \in U^* := \Hom_\Aop(U,A)$; we will come back to this situation in \S\ref{duals}. 


\medskip

A (say) {\em right comodule} $M$ over a (say) right bialgebroid $(V, B, \gD_r, \pl)$ (which is what we will need explicitly in the following) is a right $B$-module that is a right comodule with coaction 
$
m \mapsto m^{(0)} \otimes_\behhe m^{(1)}
$ 
of the coring underlying $V$, see \cite{BrzWis:CAC} for details; there is an induced left $B$-action on $M$ given by $ bm := m^{(0)}\pl(b \lact m^{(1)})$, hence $M$ becomes a $B$-bimodule and the coaction a $B$-bimodule map. 

The right coaction is then a $\Be$-module morphism
 $
 M \rightarrow
         M \times_\behhe V,
 $
where 
\begin{small}
 \begin{equation}
 \label{Takeuchicoaction}
 \textstyle
M \times_\behhe V := \{\textstyle\sum_i m_i \otimes_\behhe v_i \in M \otimes_\behhe V \mid \sum_i b m_i \otimes_\behhe v_i
 		  = \sum_i m_i \otimes_\behhe  v_i \ract b, \
 		  \forall b \in B \}
\end{equation}
\end{small}
is the {\em Takeuchi-Sweedler product}, similarly as for the bialgebroid coproduct, see \eqref{tellmemore}.

Both the categories $\comodv$ and $\vcomod$ of  right resp.\ left $V$-comodules are monoidal, and one has forgetful functors $\vcomod \to \bmodb$ and $\comodv \to \bmodb$. Same comments apply for a left bialgebroid $(U,A)$ to the categories $\ucomod$ and $\comodu$ that have forgetful functors to $\amoda$.

\subsubsection{Contramodules over bialgebroids}
Contramodules over coalgebras were introduced in \cite{EilMoo:FORHA} 
half a century ago and discussed along with comodules, but later somehow neglected and are not overwhelmingly present in the literature. They are dealt with in the direction we need, for example, in \cite{Brz:HCHWCC}, and enlarged to corings and further discussed in \cite{BoeBrzWis:MACOMC, Pos:HAOSAS}. 
In case of a finite dimensional bialgebra (or bialgebroid), one should think of a contramodule as a module over the dual (see Lemma \ref{sofocle2}).
However, contramodules pop up as natural coefficients in the cyclic theory of the $\Ext$-groups and in this sense are also hidden in the classical cyclic cohomology theory in Connes \cite{Con:NCDG}, see \S\ref{lyra}.

\begin{dfn}
\label{schoenwaers}
A {\em right contramodule} over a left bialgebroid $(U,A)$ is a right $A$-module $M$ together with a right $A$-module map 
$$
\gamma: \Hom_\Aopp(U_\ract,M) \to M, 
$$
called the {\em contraaction} (not to be confused with {\em contraction}) subject to
\pagebreak
\begin{small}
\begin{equation*}
\begin{split}
&	\xymatrix{\Hom_\Aopp(U, \Hom_\Aopp(U,M))
	\ar[rrr]^-{\scriptstyle{\Hom_\Aopp(U,\gamma)}} \ar[d]_-{\simeq}&
	& &
	\Hom_\Aopp(U,M) \ar[d]^-{\gamma} \\
	\Hom_\Aopp(U_\ract \otimes_\ahha \due U \lact {},M)
	\ar[rr]_-{\scriptstyle{\Hom_\Aopp(\gD_\ell, M)}} && \Hom_\Aopp(U,M) \ar[r]_-{\gamma}
&	M }
\\
\mbox{\normalsize{and}}
\\
& \xymatrix{\Hom_\Aop(A,M) \ar[rr]^-{\Hom_\Aop(\gve,M)} \ar[drr]_-{\simeq} & & \Hom_\Aop(U,M)
\ar[d]^-{\gamma} \\ & & M.  } 
\end{split}
\end{equation*}
\end{small}
\end{dfn}
The isomorphism of the left vertical arrow of the upper diagram is established by the right $A$-module structure on $\Hom_\Aopp(U_\ract,M)$ given by 
\begin{equation}
\label{carrefour3}
fa := f(a \lact -) 
\end{equation}
for $a \in A$, $u \in U$; the required right $A$-linearity of $\gamma$ then reads
\begin{equation}
\label{passionant}
\gamma\big(f(a \lact -) \big) = \gamma(f)a, \quad \forall a \in A.
\end{equation} 
One might be tempted to think that there always exists a trivial contraaction simply given by the evaluation $f \mapsto f(1_\uhhu)$, which obviously {\em is} a map $\Hom_\Aopp(U_\ract,M) \to M$, but it is precisely this right $A$-linearity \rmref{passionant} which excludes this possibility in general. 
However, in the situation that $A=k$, that is, for Hopf algebras, this problem disappears, see \S\ref{toisondor}. 

We will denote the ``free entry'' in the structure map $\gamma$ by hyphens or dots depending on whatever we think is more readable in a specific situation, {\it i.e.}, for $f \in \Hom_\Aopp(U,M)$ write $\gamma(f(-))$ or $\gamma(f(\cdot))$ in explicit computations (see below). As in \cite{Brz:HCHWCC}, 
we explicitly write the condition given by the first diagram for $g \in \Hom_\Aopp(U_\ract \otimes_\ahha \due U \lact {},M)$
as
\begin{equation}
\label{carrefour1}
\dot\gamma\big(\ddot\gamma(g( \cdot \otimes_\ahha \cdot\cdot))\big) 
= \gamma\big(g(-_{(1)} \otimes_\ahha -_{(2)})\big),
\end{equation}
where the dots match the map $\gamma$ with the respective argument, and where the inner contraaction $\ddot\gamma$ has to be carried out first as evident from the first diagram in Definition \ref{schoenwaers}. The second diagram explicitly reads as
\begin{equation}
\label{carrefour2}
\gamma( m \gve(-)) = m
\end{equation}
for $m \in M$.
As mentioned in \cite[\S A.7]{BoeSte:ACATCD} and similarly as for comodules, a right $U$-contramodule additionally induces a left $A$-action given by
\begin{equation}
\label{alleskleber}
am := \gamma\big(m\gve(- \bract a) \big) = \gamma\big(m\gve(a \blact -) \big)
\end{equation}
turning $M$ into an $A$-bimodule, which defines a forgetful functor 
\begin{equation}
\label{gaeta}
\mathbf{Contramod}\mbox{-}U \to \amoda
\end{equation}
from the category of right $U$-contramodules to that of $A$-bimodules. In particular, $\gamma$ this way becomes an $A$-bimodule map with respect to the right $A$-action \eqref{carrefour3} and the left $A$-action $af := f(- \bract a)$ on $\Hom_\Aop(U,M)$, as we see by computing
\begin{equation}
\label{carrefour4}
\begin{split}
\gamma\big(f ( - \bract a)\big) 
& 
{\overset{\scriptscriptstyle{
}}{=}}
\gamma\big(f\big(\gve( -_{(1)} \bract a) \lact -_{(2)}\big)  \big) 
{\overset{\scriptscriptstyle{
\eqref{carrefour1}
}}{=}}
\dot\gamma\Big(\ddot\gamma\big(f\big(\gve( \cdot \bract a) \lact \cdot\!\cdot\big)  \big)\Big) 
\\
& 
{\overset{\scriptscriptstyle{
\eqref{passionant}
}}{=}}
\dot\gamma\big(\ddot\gamma(f(\cdot\cdot)) \gve( \cdot \bract a) \big) 
{\overset{\scriptscriptstyle{
\eqref{alleskleber}
}}{=}}
a \gamma\big(f(-)\big), 
\end{split}
\end{equation}
where we used counitality of $U$ in the first step.

\vspace*{.2cm}
\begin{center}
* \quad * \quad *
\end{center}

Similar definitions hold for left contramodules, or contramodules over right bialgebroids, but none of the respective categories is known to be monoidal. 
Finally, observe that whereas the base algebra $A$ of a left bialgebroid $U$ carries a canonical $U$-module structure by the counit as well as a canonical left resp.\ right $U$-coaction by the source resp.\ target map, there is no canonical $U$-contraaction on $A$.

\subsubsection{Yetter-Drinfel'd algebras and anti Yetter-Drinfel'd modules}

The following standard concept (see, for example, \cite[\S4]{Schau:DADOQGHA} for the first part, and \cite[\S4]{BrzMil:BBAD} for the third) will be needed when establishing operadic structures on the complex in question; since we shall also consider its dual version, we need various versions of it.

\begin{dfn}
\
\begin{enumerate}
\compactlist{99}
\item
A {\em Yetter-Drinfel'd (YD) module $N$ over a left bialgebroid $(U,A)$} is a left $U$-module with action $U_\bract \otimes_\ahha N \to N$, $u \otimes_\ahha n \mapsto un$ and left $U$-comodule with coaction $N \to U_\ract \otimes_\ahha N$, $n \mapsto n_{(-1)} \otimes_\ahha n_{(0)}$ such that the underlying $A$-bimodule structures coincide and such that
\begin{equation}
\label{druento1}
 (u_{(1)}n)_{(-1)} u_{(2)} \otimes_\ahha (u_{(1)}n)_{(0)}= 
		  u_{(1)} n_{(-1)} \otimes_\ahha u_{(2)} n_{(0)} 
\end{equation} 
for all $  u \in U, \ n \in N$. 
The resulting monoidal category will be denoted by $\yd$.
\item
A {\em Yetter-Drinfel'd (YD) module $M$ over a right bialgebroid $(V,B)$} is a right $V$-module with action $M \otimes_\behhe {}_\lact V \to M$, $m \otimes_\behhe v \mapsto mv$ and right $V$-comodule with coaction $M \to M \otimes_\behhe {}_\blact V$, 
$
m \mapsto m^{(0)} \otimes_\behhe m^{(1)}
$
such that the underlying $B$-bimodule structures coincide and such that
\begin{equation}
\label{druento2}
  (m v^{(2)})^{(0)} \otimes_\behhe v^{(1)} (m v^{(2)})^{(1)}= 
 	  m^{(0)} v^{(1)} \otimes_\behhe m^{(1)} v^{(2)} 
\end{equation} 
for all $ v \in V, \ m \in M $. We denote the resulting monoidal category by $\ydr$.
\item
A {\em Yetter-Drinfel'd algebra} is an element $N$ in $\yd$ (and {\it mutatis mutandis} in $\ydr$) that is both a monoid in $\umod$ as well as in $\ucomod$, and which is called {\em braided commutative} if it is commutative with respect to the Yetter-Drinfel'd braiding, that is
\begin{equation}
\label{druento3}
n \cdot_\enne n' = (n_{(-1)}n') \cdot_\enne n_{(0)}, 
\end{equation}
for all $n, \ n' \in N$, where $- \cdot_\enne -$ indicates the monoid structure on $N$.
\end{enumerate}
\end{dfn}

Let us still mention that for a monoid $N$ in $\umod$ (as for example a Yetter-Drinfel'd algebra), one has the following properties for $m, n \in M, \ a, b  \in A$,
\begin{equation}
\label{bilet}
a \lact (m \cdot_\emme n) \ract b = (a \lact m) \cdot_\emme (n \ract b), 
\quad m \cdot_\emme (a \lact n) = (m \ract a) \cdot_\emme n
\end{equation}
with respect to the induced $\Ae$-module structures from \eqref{pergolesi}.


\bigskip

Furthermore, to define cyclic modules, we will need the right bialgebroid version of certain coefficient modules as defined in \cite[Def.~2.15]{BoeSte:CCOBAAVC}, which are in some sense opposite to Yetter-Drinfel'd modules:
 
\begin{dfn}
An {\em anti Yetter-Drinfel'd (aYD) module} $M$ over a right bialgebroid $(V,B)$ is a left $V$-module and right $V$-comodule such that the underlying $B$-bimodule structures arising from the forgetful functors coincide, that is,
\begin{equation}
\label{orvieto1}
a \lact m \ract b = amb, \quad  m \in M, \ a, b \in B,
\end{equation}
and such that action followed by coaction is given as
\begin{equation}
\label{orvieto2}
(vm)^{(0)} \otimes_\behhe (vm)^{(1)} = v^{(1)+} m^{(0)} \otimes_\behhe v^{(2)} m^{(1)} v^{(1)-}, \quad v \in V, \ m \in M.
\end{equation}
An anti Yetter-Drinfel'd module is called {\em stable} if coaction followed by action yields the identity, that is, $m^{(1)} m^{(0)} = m$. 
\end{dfn}

\begin{rem}
\label{nunzia}
In \cite[Prop.~4.4]{Schau:DADOQGHA}, a monoidal equivalence between $\yd$ and the {\em weak} centre of $\umod$ is established, whereas in  \cite[Lem.~6.1]{Kow:GABVSOMOO} it was proven in the bialgebroid context that aYD modules form a module category over $\yd$, see the comments below Remark \ref{polline}. On the other hand, in \cite[Prop.~2.7]{Sha:OTAYDMCC} it was shown that at least for a Hopf algebra $H$ the category of aYD modules can be identified with the centre of the bimodule category given by the opposite category of $H$-comodules. We presume that an analogous statement holds for bialgebroids when using the {\em weak} centre of a bimodule category \cite[Def.~2.1]{KobSha:ACATCCOQHAAHA}, see also Remark \ref{tatatatataaa} at the end of section \S\ref{t'n'c}.
\end{rem}

As we want to compare bialgebroids with their duals later on, we will finally need the following concept of an anti Yetter-Drinfel'd {\em contra\/}module but in order not to swamp the reader with too many definitions in a row, we will postpone it until Definition \ref{chelabertaschen1}.

\subsection{Duals}
\label{duals}
In this section we briefly recall how the right dual $U^* := \Hom_\Aopp(U_\ract, A_\ahha)$ becomes a right bialgebroid if the right $A$-module $U_\ract$ for the left bialgebroid $(U,A)$ is finitely generated projective over $A$; a similar discussion also holds for the left dual $U_* := \Hom_\ahha(\due U \lact {}, \due A \ahha {})$, which we omit. See \cite{KadSzl:BAODTEAD} 
and \cite[\S3.1]{Kow:HAATCT} for all notions and conventions used here. In longer expressions, we will frequently write $\langle \phi, u \rangle$ to mean $\phi(u)$ for $\phi \in U^*$, $u \in U$ as this increases readability (at least in our opinion).

The dualisation of the left bialgebroid
structure $(U, A, s^\ell, t^\ell, \gD_\ell, \gve)$  yields a right
bialgebroid $(U^*, A, s^r, t^r, \gD_r, \pl)$ over the same base algebra
if one imposes suitable
finiteness and projectivity
assumptions \cite{KadSzl:BAODTEAD}: 
the monoid structure exists in full
generality and is given by 
\begin{equation}
\label{LDMon}
(\phi \psi)(u) 
:= \langle \psi, \langle \phi, u_{(1)} \rangle \lact u_{(2)} \rangle,
\end{equation}
where $\phi, \psi \in {U^*}, \ u \in
U$. This product can be promoted to 
an $A^e$-ring structure if one
defines the source and target maps as
\begin{equation*}
\label{LDSource}
 s^r : A \to  U^*, \ a \mapsto \gve((\cdot) \bract a), \quad  t^r : A \to  U^*, \ a \mapsto \gve(a \lact (\cdot)) = a \gve(\cdot).
\end{equation*}
If we denote, exactly 
as in \eqref{pergolesi} for every bialgebroid (be it left or right), 
the four $A$-module structures on $U^*$ as 
$$
a \lact \phi \ract b = s^r(a) t^r(b) \phi, \quad a \blact \phi \bract b = 
\phi \, t^r(a) s^r(b)
$$
for $\phi \in U^*$, $a,b \in A$, 
one can write down the identities
\begin{equation}
\label{duedelue}
\!\!\!\!\!\!\begin{array}{rclrclrcl}
\langle\phi , u \ract a \rangle  \!\!\!&=\!\!\!&
\langle \phi , u \rangle a, &
\langle \phi , a \lact u \rangle  \!\!\!&=\!\!\!&
\langle \phi \ract a  , u \rangle, &
\langle \phi , u \bract a \rangle  \!\!\!&=\!\!\!&
\langle a \lact \phi , u \rangle, \\
\langle \phi , a \blact u \rangle   \!\!\!&=\!\!\!&
 \langle \phi \bract a , u \rangle, &
\langle a \blact \phi, u \rangle  \!\!\!&=\!\!\!&
a \langle \phi , u \rangle,
\end{array}
\end{equation}
hence the pairing between the $\Ae$-rings $U^*$ and $U$ corresponds to what was called a {\em right $\Ae$-pairing} in \cite{CheGav:DFFQG}.

In order to obtain an $A$-coring structure $(U^*, \gD_r, \pl)$, 
one sets
\begin{equation}
\label{weiszjanich}
\!\!\! \begin{array}{rll}
\gD_r 
\!\!\! &
: U^* \to \Hom_\Aopp(\due U \blact {} \otimes_\Aopp U_\ract, A), \
&\!\! 
\phi  \mapsto \{ u \otimes_\Aopp v \mapsto \phi(uv) \}, 
 \\
\pl   
\!\!\! &
: U^* \to A,     
&\!\!
\phi \mapsto  \phi(1_\uhhu).
\end{array}
\end{equation}
When $U_\ract$ and hence also
$\due {U^*} \blact {}$ is finitely generated $A$-projective, the map 
\begin{equation}
\label{zuschlag}
U^*_\bract \otimes_\ahha \due {U^*} \blact {} \to
\Hom_\Aopp(\due U \blact {} \otimes_\Aopp U_\ract, A), \ 
\phi \otimes_\ahha \psi \mapsto \{ u  \otimes_\Aopp v \mapsto 
\langle \phi, \langle \psi , v \rangle \blact u \rangle \}, 
\end{equation}
is an isomorphism and
$\gD_r$ given above defines a
(right) coproduct, that is
\begin{equation}
\label{trattovideo}
\langle \phi^{(1)}, \langle \phi^{(2)} , v \rangle \blact u \rangle = \langle \phi, uv \rangle,
\end{equation}
and $\pl$ gives the (right) counit for this.
Choosing a dual basis $\{e_i\}_{1 \leq i \leq n}  \in
U, \ \{e^i\}_{1 \leq i \leq n}  \in U^*$, one can decompose
\begin{equation}
\label{schizzaestrappa1}
u = \textstyle\sum_i e_i \ract \langle e^i, u \rangle,
\end{equation}
and from \eqref{duedelue} follows that if $U_\ract$ is finitely generated $A$-projective, then $\due {U^*} \blact {}$ is so as well with decomposition
$
\langle \phi, u \rangle 
= \sum_i \langle \phi,  e_i \ract \langle e^i, u \rangle \rangle 
= \sum_i \langle \phi, e_i \rangle \langle e^i, u \rangle 
= \sum_i \langle \langle \phi,  e_i \rangle \blact e^i, u \rangle, 
$ 
hence 
\begin{equation}
\label{schizzaestrappa2}
\phi = \textstyle\sum_i \langle \phi,  e_i \rangle \blact e^i.
\end{equation}
We conclude by explicitly writing down the coproduct \eqref{weiszjanich} by means of the left $U$-module structure 
\begin{equation}
\label{gianduiotto7}
(v \rightharpoonup  \phi)(u) :=  \phi(uv) 
\end{equation}
on $U^*$ as
\begin{equation}
\label{braummadas}
\gD_r 
: U^* \to U^*_\bract \otimes_\ahha \due {U^*} \blact {},
\quad 
\phi \mapsto 
\textstyle\sum_i  (e_i \rightharpoonup \phi) \otimes_\ahha e^i. 
\end{equation}

\section{The Hopf structure on the dual}

A question left open for years was whether the dual(s) of a bialgebroid carries a (left or right) Hopf structure as well. 
A certain progress in this direction was made in \cite{CheGavKow:DFOLHA}, where a map $S^*: U^* \to U_*$ was defined that turns out to be an isomorphism if $U$ is both a left and a right Hopf algebroid (as a left bialgebroid). The ultimate answer that the dual(s) do carry a Hopf structure as well was given in 
\cite{Schau:TDATDOAHAAHA} 
by an abstract categorical reasoning, but no explicit expression for the translation map was given, a gap we are going to fill in this section. The idea is pretty simple: whereas the coproduct on $U^*$ is somehow the transpose of the left $U$-module structure \eqref{gianduiotto7} on $U^*$, the translation map, which can be interpreted as the inverse of the coproduct, results as the transpose of the left $U$-module structure \eqref{gianduiotto1a} on $U^*$. More precisely:

\begin{theorem}
\label{Hopfdual}
Let $(U,A)$ be a left bialgebroid which is additionally a left Hopf algebroid, and let $U_\ract$ be finitely generated $A$-projective. Then the right dual $(U^*,A)$ is a right bialgebroid which is additionally a right Hopf algebroid. 
More precisely, the map $\tilde\beta^{-1}: U^* \to \due {U^*} \blact {} \otimes_\Aopp U^*_\ract$ given by
\begin{equation}
\label{viviverde1}
({\tilde\gb}^{-1}(\phi))(u \otimes_\ahha v) = (u \rightslice \phi)(v) 
= \gve(\phi(u_-v) \blact u_+)
\end{equation}
yields a translation map which defines an inverse to the Hopf-Galois map 
\begin{equation}
\label{favini}
\gb: \due {U^*} \blact {} \otimes_\Aopp U^*_\ract \to U^*_\bract \otimes_\ahha \due {U^*} \blact {}, \quad \phi \otimes_\Aopp \psi \mapsto \phi \psi^{(1)} \otimes_\ahha \psi^{(2)}
\end{equation}
on $U^*$.
Explicitly, if $\{e_i\}_{1 \leq i \leq n} \in U, \ \{ e^i\}_{1 \leq i \leq n} \in U^*$ is a dual basis, the translation 
map reads as
\begin{equation}
\label{viviverde2}
\phi^- \otimes_\Aopp \phi^+ := \textstyle\sum_i e^i \otimes_\Aopp (e_i \rightslice \phi).
\end{equation}
\end{theorem}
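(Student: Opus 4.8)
The plan is to exhibit an explicit two-sided inverse of the Hopf--Galois map $\gb$ of \eqref{favini} and to recognise it as the transpose of the left Hopf algebroid structure of $U$. Since the right bialgebroid structure on $U^*$ is already available from \cite{KadSzl:BAODTEAD} (recalled in \S\ref{duals}), the remaining content is the bijectivity of $\gb$ together with the stated shape of the translation map. I would adopt $\phi^- \otimes_\Aopp \phi^+ := \sum_i e^i \otimes_\Aopp (e_i \rightslice \phi)$ as the definition of $\tilde\gb^{-1}$, extend it to $G \colon U^*_\bract \otimes_\ahha \due {U^*} \blact {} \to \due {U^*} \blact {} \otimes_\Aopp U^*_\ract$, $\chi \otimes_\ahha \psi \mapsto \chi \psi^- \otimes_\Aopp \psi^+$, and show that $G$ is inverse to $\gb$ on both sides.

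First I would dispose of well-posedness. The assignment $u \mapsto u \rightslice \phi$ is a genuine left $U$-action by \eqref{gianduiotto1a}, hence in particular $A$-balanced; consequently its dual-basis expansion $\sum_i e^i \otimes_\Aopp (e_i \rightslice \phi)$ is independent of the chosen dual basis and automatically lands in the Sweedler--Takeuchi product $U^* \times_\Aopp U^*$ of \eqref{Rch1}, so that $G$ is defined on the correct balanced tensor products. At this point it is useful to record the evaluation formula $(\tilde\gb^{-1}(\phi))(u \otimes_\ahha v) = \gve(\phi(u_- v) \blact u_+)$, which follows immediately by substituting \eqref{gianduiotto1a} into the dual-basis expansion and collapsing the sum via \eqref{schizzaestrappa1}.

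The core of the argument is the verification of the two translation identities \eqref{Rch2} and \eqref{Rch3} for $V = U^*$, which between them say that both composites $\gb \circ G$ and $G \circ \gb$ are the identity, and hence that $\gb$ is bijective. I would test each relation by pairing against arbitrary $u, v \in U$: unfolding the product \eqref{LDMon} and the coproduct \eqref{trattovideo} of $U^*$, inserting the evaluation formula for $\rightslice$ and for $\phi^{\pm}$, and then collapsing the resulting expression by means of the left Hopf algebroid identities for $U$ --- principally \eqref{Sch2}, \eqref{Sch3}, \eqref{Sch7} and \eqref{Sch8} --- together with the counitality axioms of the left bialgebroid. Once \eqref{Rch2} and \eqref{Rch3} hold, $\gb^{-1}$ exists and equals $G$, so $U^*$ is a right Hopf algebroid and its translation map is exactly \eqref{viviverde2}.

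The principal obstacle I anticipate is bookkeeping rather than any conceptual difficulty: one must track with precision the four different $A$-module structures $\lact, \ract, \blact, \bract$ on $U^*$ recorded in \eqref{duedelue}, and at every stage keep straight which tensor product over $A$ --- source- or target-balanced --- is intended, since the Hopf--Galois map, the coproduct, and the translation map each live over a different balancing. A related delicate point is that an equality of elements of $\due {U^*} \blact {} \otimes_\Aopp U^*_\ract$ may not be checked by naive evaluation but only after transporting it, through the finiteness isomorphism \eqref{zuschlag} and its analogue for the source-balanced factor, into the $\Hom$-space where pairing against $u \otimes_\ahha v$ is legitimate; making sure each manipulation respects these balancings is where the care lies. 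Modulo this disciplined symbol-pushing, the whole computation reduces to the identities \eqref{Sch1}--\eqref{Sch9} already established for the left Hopf algebroid $U$.
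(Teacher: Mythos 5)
Your proposal is correct and takes essentially the same route as the paper's proof: there, too, the right bialgebroid structure is quoted from Kadison--Szlach\'anyi, the translation map is the dual-basis expression \eqref{viviverde2}, the first-factor $U^*$-linearity of $\gb$ (and of its candidate inverse) is what reduces bijectivity to the two identities \eqref{Rch2} and \eqref{Rch3}, and these are checked by pairing against $u \otimes_\Aopp v$ via \eqref{LDMon}, \eqref{trattovideo}, \eqref{gianduiotto1a} and the left Hopf algebroid identities of $U$ (concretely \eqref{Sch3}, \eqref{Sch6}, \eqref{Sch9} together with \eqref{schizzaestrappa1}, with \eqref{Rch3} left to the reader). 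Apart from your slightly different anticipation of which of \eqref{Sch1}--\eqref{Sch9} are actually needed, the structure, the reduction, and the computational strategy coincide.
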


\begin{rem}
\label{nocheins}
Comparing the coproduct \eqref{braummadas} with the translation map \eqref{viviverde2}, we notice that these expressions are built the same way and only (apart from using different tensor products) differ by the way $U$ acts on $U^*$.
Observe that for $v=1$ this yields the map $S^*: U^* \to U_*$ introduced in \cite{CheGavKow:DFOLHA}; see there for a discussion of this map.
Of course, a similar statement also holds for the left dual $U_*$, that is, if the left bialgebroid $U$ additionally were a {\em right} Hopf algebroid and $\due U \lact {}$ finitely generated $A$-projective, then the right bialgebroid $U_*$ became a left Hopf algebroid. If $U$ happens to be both a left and a right Hopf algebroid (and $A$-projective in two senses), then the two duals are isomorphic via the map $S^*$, see \cite[Thm.~5.1.3]{CheGavKow:DFOLHA}.
\end{rem}

\begin{proof}[Proof of Theorem \ref{Hopfdual}]
As the Hopf-Galois map is a morphism of right $U^*$-modules, it is enough to verify \eqref{Rch2} and \eqref{Rch3} with respect to \eqref{viviverde2} to show that \eqref{viviverde1} defines the inverse to the Hopf-Galois map \eqref{favini} for the right bialgebroid $U^*$.
One has
\begin{equation*}
\begin{split}
\langle & \phi^- \phi^{+(1)} \otimes_\ahha \phi^{+(2)} , u \otimes_\Aopp v \rangle 
\overset{{\scriptscriptstyle{\eqref{zuschlag}}}}{=} \langle  \phi^- \phi^{+(1)}, \langle \phi^{+(2)}, v\rangle \blact u\rangle \\
&\overset{{\scriptscriptstyle{\eqref{LDMon}}}}{=} \big\langle \phi^{+(1)} ,  \langle \phi^-, u_{(1)}\rangle \lact (  \langle \phi^{+(2)}, v\rangle \blact u_{(2)} ) \big\rangle \\
&\overset{{\scriptscriptstyle{\eqref{trattovideo}}}}{=} \langle \phi^+ ,
\langle \phi^-, u_{(1)}\rangle \lact u_{(2)}v  \rangle \\
&\overset{{\scriptscriptstyle{\eqref{viviverde2}}}}{=} \textstyle\sum_i \langle e_i \rightslice \phi , \langle e^i, u_{(1)}\rangle \lact u_{(2)}v  \rangle \\
&\overset{{\scriptscriptstyle{\eqref{gianduiotto1a}}}}{=} \textstyle\sum_i \big\langle \gve , \langle \phi , (e_{i-} \bract \langle e^i , u_{(1)} \rangle) u_{(2)} v \rangle  \blact e_{i+}\big\rangle \\
&\overset{{\scriptscriptstyle{\eqref{Sch6}, \eqref{Sch9}}}}{=} \textstyle\sum_i \big\langle \gve, \langle \phi, (e_{i} \ract \langle e^i , u_{(1)} \rangle)_- u_{(2)} v \rangle \blact (e_{i} \ract \langle e^i , u_{(1)} \rangle)_+  \big\rangle \\
 &\overset{{\scriptscriptstyle{\eqref{schizzaestrappa1}}}}{=} \langle \gve, \langle \phi,  u_{(1)-} u_{(2)}v \rangle \blact u_{(1)+} \rangle \\
 &\overset{{\scriptscriptstyle{\eqref{Sch3}}}}{=} \langle \gve,  \langle \phi, v \rangle \blact u \rangle \overset{{\scriptscriptstyle{\eqref{zuschlag}}}}{=} \langle \gve \otimes_\ahha \phi, u \otimes_\Aopp v \rangle. 
\end{split}
\end{equation*}
Hence, $\phi^- \phi^{+(1)} \otimes_\ahha \phi^{+(2)} = \gve \otimes_\ahha \phi$, which is \eqref{Rch2} for the right bialgebroid $(U^*, A)$.
Verifying also \eqref{Rch3} along these lines is left to the reader.
\end{proof}
For convenience in later computations, note the following useful relation:
\begin{lem}
For $\phi \in U^*$ and $u \in U$, one can write
\begin{equation}
\label{sondreck}
\phi^+ \ract \langle \phi^-, u \rangle = u \rightslice \phi. 
\end{equation}
\end{lem}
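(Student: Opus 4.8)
The plan is to reduce \eqref{sondreck} to the right $A$-linearity of one map and then apply the dual-basis decomposition. Unravelling the definition \eqref{viviverde2} of the translation map, the left-hand side of \eqref{sondreck} reads
$$
\phi^+ \ract \langle \phi^-, u \rangle = \textstyle\sum_i (e_i \rightslice \phi) \ract \langle e^i, u \rangle ,
$$
so that the assertion is equivalent to $\sum_i (e_i \rightslice \phi) \ract \langle e^i, u \rangle = u \rightslice \phi$. Since by \eqref{schizzaestrappa1} one has $u = \sum_i e_i \ract \langle e^i, u \rangle$ and since the operation $(-) \rightslice \phi$ is additive, the identity will follow immediately once the map $g_\phi \colon U_\ract \to U^*_\ract$, $u \mapsto u \rightslice \phi$, is shown to be right $A$-linear: right $A$-linearity yields $g_\phi(u) = \sum_i g_\phi(e_i \ract \langle e^i, u \rangle) = \sum_i g_\phi(e_i) \ract \langle e^i, u \rangle$, which is precisely the desired equality.

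First I would therefore establish $(u \ract a) \rightslice \phi = (u \rightslice \phi) \ract a$ for all $u \in U$, $a \in A$. Recalling that $\ract$ on $U$ is left multiplication by the target, $u \ract a = t^\ell(a) u$, I would compute the translation map of $t^\ell(a) u$ from \eqref{Sch6} and \eqref{Sch9}: the target-only case of \eqref{Sch9} gives $(t^\ell(a))_+ \otimes_\Aopp (t^\ell(a))_- = 1 \otimes_\Aopp s^\ell(a)$, and inserting this into the antimultiplicativity relation \eqref{Sch6} produces
$$
(u \ract a)_+ \otimes_\Aopp (u \ract a)_- = u_+ \otimes_\Aopp u_- s^\ell(a) .
$$

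Then I would evaluate both sides of the proposed equality on an arbitrary $v \in U$. Using the explicit formula \eqref{gianduiotto1a} for $\rightslice$ together with the previous step yields $\big((u \ract a) \rightslice \phi\big)(v) = \gve\big(\phi(u_- s^\ell(a) v) \blact u_+\big)$, while reading off the $\ract$-action on $U^*$ from \eqref{duedelue} in the form $(\psi \ract a)(v) = \psi(a \lact v) = \psi(s^\ell(a) v)$ gives $\big((u \rightslice \phi) \ract a\big)(v) = (u \rightslice \phi)(s^\ell(a) v) = \gve\big(\phi(u_- s^\ell(a) v) \blact u_+\big)$. The two expressions coincide, so $g_\phi$ is right $A$-linear and \eqref{sondreck} follows.

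I expect the only real difficulty to be bookkeeping: one must carefully distinguish the right $A$-action $\ract$ on $U$ (left multiplication by $t^\ell$) from the formally identical symbol $\ract$ on the dual $U^*$, and apply \eqref{Sch6} and \eqref{Sch9} with the correct ordering, since \eqref{Sch6} reverses the two legs of the translation map. Once these module structures are pinned down, the verification is entirely mechanical, and no finiteness beyond the projectivity needed for the dual-basis decomposition \eqref{schizzaestrappa1} enters.
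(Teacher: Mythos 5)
Your proof is correct and in substance identical to the paper's: the paper likewise expands $\phi^{\pm}$ through the dual basis \eqref{viviverde2}, pairs against an arbitrary element of $U$, and combines \eqref{duedelue}, \eqref{gianduiotto1a}, \eqref{Sch6}, \eqref{Sch9} and \eqref{schizzaestrappa1} to reassemble $u = \sum_i e_i \ract \langle e^i, u\rangle$ --- which is exactly your right $A$-linearity of $u \mapsto u \rightslice \phi$ carried out inline within the pairing. Factoring that linearity out as a separate step is a tidy reorganization, not a different argument.
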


\begin{proof}
As the desired identity lives in $U^*$, we are going to prove it by pairing both sides with an arbitrary element $w \in U$. 
One has
\begin{eqnarray*}
\big\langle \phi^+ \ract \langle \phi^-, u \rangle, w\big\rangle  
&
\!\!\!\!\!\!
\overset{{\scriptscriptstyle{\eqref{duedelue}}}}{=}
&
\!\!\!\!\!\!
\big\langle \phi^+, \langle \phi^-, u \rangle \lact  w\big\rangle  
\overset{{\scriptscriptstyle{\eqref{viviverde2}}}}{=}
\textstyle \sum_i \big\langle e_i \rightslice \phi, \langle e^i, u \rangle \lact  w\big\rangle   
 \\
 &
\!\!\!\!\!\!
 \overset{{\scriptscriptstyle{\eqref{gianduiotto1a}}}}{=}
&
\!\!\!\!\!\!
\textstyle \sum_i \gve\big( \big\langle \phi, (e_{i-} \bract \langle e^i, u \rangle) w \big\rangle\blact e_{i+}  \big)   
\\
&
\!\!\!\!\!\!
\overset{{\scriptscriptstyle{\eqref{Sch6}, \eqref{Sch9}, \eqref{schizzaestrappa1}}}}{=}
&
\!\!\!\!\!\!
 \textstyle \gve(\langle \phi, u_- w \rangle \blact u_+)
 = \langle u \rightslice \phi, w \rangle,
\end{eqnarray*}
for all  $w \in U$. 
\end{proof}

\section{Cyclic structures on the complex computing $\Ext$}

The idea in this section is to obtain the structure of a cyclic module on the complex that computes the $\Ext$-groups as an application of the Hopf structure on the dual, since we now know that $(U^*,A)$ is a right Hopf algebroid. 
To start with, consider the following right bialgebroid version of \cite[Thm.~3.6--3.7]{KowKra:CSIACT}:

\begin{lem}
\label{sofocle1}
Let $(V,B,\gD_r, \pl)$ be a right bialgebroid which is additionally a right Hopf algebroid in the sense of Eqs.~\eqref{ilariabianchi2}--\eqref{ilariabianchi3}. Let $M$ be a left $V$-module right $V$-comodule with compatible respective right $B$-actions $mb = m \ract b$ for $m \in M$ and $b \in B$.
Then the quadruple $(C^\bull_\co(V,M), \gd'_\bull,
 \gs'_\bull, \tau')$ defined as 
$$
C^\bull_\co(V,M) := M \otimes_\behhe V \otimes_\behhe \cdots \otimes_\behhe V
$$
with operations
\begin{equation}
\!\!\!\!\!\! \begin{array}{rll}
\label{anightinpyongyang2}
\gd'_i(m \otimes_\behhe w) \!\!\!\!&= \left\{\!\!\!
\begin{array}{l} 
m \otimes_\behhe v_1 \otimes_\behhe \cdots
 \otimes_\behhe v_n \otimes_\behhe 1  
\\ 
m \otimes_\behhe v_1 \otimes_\behhe \cdots \otimes_\behhe \Delta_r (v_{n-i+1}) \otimes_\behhe \cdots
 \otimes_\behhe v_n
\\
m^{(0)} \otimes_\behhe m^{(1)} \otimes_\behhe v_1 \otimes_\behhe \cdots \otimes_\behhe v_n 
\end{array}\right. \!\!\!\!\!\!\!\! 
& \!\! \hspace*{-1.4cm}  \begin{array}{l} \mbox{if} \ i=0, \\ \mbox{if} \
  1 \leq i \leq n, \\ \mbox{if} \ i = n + 1,  \end{array} 
\\
\
\\
\gd'_i(m) \!\!\!\! &= \left\{ \!\!\!
\begin{array}{l}
		  m 
		  \otimes_\behhe 1  \quad
\\

m^{(0)} \otimes_\behhe m^{(1)}  \quad 
\end{array}\right. & \!\! \hspace*{-1.4cm} 
\begin{array}{l} \mbox{if} \ i=0, \\ \mbox{if} \
  i = 1 ,  \end{array} 
\\
\
\\
\gs'_j(m \otimes_\behhe w) \!\!\!\! 
&= m \otimes_\behhe v_1 \otimes_\behhe \cdots \otimes_\behhe
\pl (v_{n-j}) \otimes_\behhe \cdots \otimes_\behhe v_n, & \! \,
\hspace*{1pt} \hspace*{-1.8cm}  0 \leq j \leq n-1,
\\
\
\\
\tau'(m \otimes_\behhe w) \!\!\!\! 
&=  v^+_n m^{(0)} \otimes_\behhe m^{(1)}v_n^{-(1)}  \otimes_\behhe v_1 v_n^{-(2)}  \otimes_\behhe \cdots \otimes_\behhe v_{n-1} v_n^{-(n)} & 
\end{array}
\end{equation}
defines a para-cocyclic $k$-module (where we abbreviated $w:=v^1 \otimes_\behhe
\cdots \otimes_\behhe v^n$).
If $M$ is a stable aYD module over the right bialgebroid $V$, then $(C^\bull_\co(V,M), \gd'_\bull,
 \gs'_\bull, \tau')$ is a cocyclic 
$k$-module.
\end{lem}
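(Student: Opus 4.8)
Because the first part of the lemma already produces a para-cocyclic $k$-module, every cosimplicial identity among the faces $\gd'_i$ and degeneracies $\gs'_j$ of \eqref{anightinpyongyang2}, as well as every commutation relation of $\tau'$ with these, holds with no assumption on $M$ beyond being a left $V$-module and right $V$-comodule with matching right $B$-actions. Hence the single thing that remains to be shown is the cyclicity relation
\[
(\tau')^{n+1} = \id_{C^n_\co(V,M)}, \qquad C^n_\co(V,M) = M \otimes_\behhe V^{\otimes_\behhe n},
\]
for every $n \ge 0$, and it is exactly at this point that the stable anti Yetter-Drinfel'd hypothesis is used.

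The plan is to iterate $\tau'$ and keep track of each tensor slot. By \eqref{anightinpyongyang2} a single application of $\tau'$ removes the last factor $v_n$, replaces it through the translation map \eqref{ilariabianchi3} by $v_n^- \otimes_\Bopp v_n^+$, lets $v_n^+$ act on the coacted module factor, and spreads the iterated right coproduct of $v_n^-$ --- governed by the translation-coproduct identities \eqref{Rch4}--\eqref{Rch5} --- over the cyclically advanced remaining entries, its first component being absorbed into the new comodule leg $m^{(1)}$. Iterating $n+1$ times, each original entry $v_i$ is multiplied on the right by translation pieces stemming from all the others; the guiding expectation is that these pieces then pair off and cancel by the antipode-type relations \eqref{Rch2}, \eqref{Rch3} and \eqref{Rch6}--\eqref{Rch8}, restoring every $v_i$ to its original slot and value, the requisite transport of $B$-elements across the tensor symbols being permitted by the Takeuchi-Sweedler balancing \eqref{Rch1}.

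To turn this into a proof I would formulate an inductive claim giving a closed form for $(\tau')^k(m \otimes_\behhe v_1 \otimes_\behhe \cdots \otimes_\behhe v_n)$, $1 \le k \le n+1$, in which the module factor is a single coaction of $m$ twisted by the accumulated translation data; the inductive step is one more application of $\tau'$ combined with coassociativity of $\gD_r$ and the aYD compatibility \eqref{orvieto2}, which is precisely the rule that moves the coaction past the action by $v_n^+$ that $\tau'$ introduces. At $k = n+1$ the closed form collapses to $m^{(1)} m^{(0)} \otimes_\behhe v_1 \otimes_\behhe \cdots \otimes_\behhe v_n$, and a single appeal to stability, $m^{(1)} m^{(0)} = m$, yields the desired identity. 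The main obstacle is the combinatorial control of the iterated coproduct of the translation map together with the $B$-balancings across all $n+1$ steps; stability itself is invoked exactly once, and only in the final collapse. A less computational alternative is to transport the left-bialgebroid statement \cite[Thm.~3.6--3.7]{KowKra:CSIACT} along the opposite correspondence between right and left bialgebroids, under which the quadruple $(C^\bull_\co(V,M), \gd'_\bull, \gs'_\bull, \tau')$ and the condition \eqref{orvieto2} should match the data treated there; before relying on it I would check that this correspondence indeed carries $\tau'$ and the stability condition over faithfully.
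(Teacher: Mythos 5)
Your proposal is correct and essentially coincides with the paper's treatment: the paper's entire proof consists of the remark that the statement ``works along the lines of'' the left-bialgebroid result \cite[Thm.~3.6--3.7]{KowKra:CSIACT} --- exactly the transport you offer as your fallback route --- and the direct argument you sketch (a closed-form induction on $(\tau')^k$, with the aYD condition \eqref{orvieto2} moving the coaction past the newly introduced action in each step, and stability invoked exactly once so that $(\tau')^{n+1}$, which equals coaction followed by action on the coefficient, collapses to the identity) is precisely the shape of the computation in that cited source. The only caveat is that your opening sentence is circular as stated --- the para-cocyclic identities of the first part are themselves part of the claim, not a given --- but they are indeed routine verifications needing no aYD hypothesis, which is why the paper omits them as well.
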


\begin{proof}
The proof of this fact works along the lines of the corresponding result in {\it loc.~cit.}, which is why we omit it.
\end{proof}

The cosimplicial $k$-module in \eqref{anightinpyongyang2} only uses the right bialgebroid structure along with the right $V$-comodule structure of $M$. Putting 
 $\gb' := \sum^{n+1}_{i=0} (-1)^i \gd'_i$, we therefore define:

\begin{dfnlem}
The (simplicial) cohomology computed by $(C^\bull_\co(V,M), \gb')$ for a right bialgebroid $(V,B)$ and a right $V$-comodule $M$ is denoted by $H^\bull_\co(V,M)$ and called the {\em coHochschild cohomology} of $V$ with values in $M$; if $\due V \blact {}$ is flat as a $B$-module,
then $H^\bull_\co(V,M) \simeq \Cotor^\bull_V(M,B)$, see \cite{KowKra:CSIACT}.
\end{dfnlem}

We shall apply this lemma to the right bialgebroid $(U^* , A)$ given by the right dual of a left bialgebroid $(U,A)$ to obtain the structure of a cocylic $k$-module structure on 
\begin{equation}
\label{baumbluetenfest}
C^\bull(U,M) := \Hom_\Aopp(U^{\otimes_\Aopp \bull}, M),
\end{equation} 
where the tensor product is formed with respect to the $A$-bimodule structure $\due U \blact \ract$ and $M$ is a right $U$-contramodule and left $U$-module.

\subsection{Contramodules as coefficients}
\label{atacvantaggi}
To obtain cyclicity, we need to impose a compatibility condition between the $U$-action and the $U$-contraaction, similar to the case of aYD modules above, leading to so-called aYD {\em contra\/}modules, which is the content of this subsection. 
In the context of Hopf algebras, the corresponding definition is due to \cite{Brz:HCHWCC}
 again, but for noncommutative base rings, {\it i.e.}, for (left) Hopf algebroids we are not aware of any reference:

\begin{dfn}
\label{chelabertaschen1}
An {\em anti Yetter-Drinfel'd (aYD) contramodule} $M$ over a left Hopf algebroid $(U,A)$ is simultaneously a left $U$-module and a right $U$-contramodule such that both underlying $A$-bimodule structures from \eqref{pergolesi} and \eqref{gaeta} coincide, 
that is
\begin{equation}
\label{romaedintorni}
a \lact m \ract b = amb, \qquad m \in M, \ a,b \in A,
\end{equation}
and such that action and contraaction are compatible in the sense that 
\begin{equation}
\label{nawas1}
u (\gamma(f)) = \gamma \big(u_{+(2)} f(u_-(-)u_{+(1)}) \big), \qquad \forall u \in U, \ f \in \Hom_\Aopp(U,M).
\end{equation}
An anti  Yetter-Drinfel'd contramodule is called {\em stable} if 
\begin{equation}
\label{stablehalt}
\gamma((-)m)= m
\end{equation}
for all $m \in M$, where we denote 
$(-)m \colon u \mapsto um$ as a map in $ \Hom_\Aopp(U,M)$.
\end{dfn}

\begin{rem}
\quad
\begin{enumerate}
\compactlist{99}
\item
The left $U$-action that appears in the argument of $\gamma$ on the right hand side of \eqref{nawas1} is of course that on $M$: the argument of $\gamma$ has therefore to be understood as the element in $\Hom_\Aopp(U, M)$ given by $U \ni w \mapsto u_{+(2)} f(u_- w u_{+(1)})$, see below for the issue of well-definedness. For a conceptually somewhat cleaner notation (but with less similarity to aYD modules) one could rewrite the condition \eqref{nawas1} as
\begin{equation}
\label{nawas2}
u (\gamma(f)) = \gamma \big((u_{(2)} \rightslice f)((-) u_{(1)}) \big),
\end{equation}
using \eqref{Sch4} and the left $U$-action \eqref{gianduiotto1} on $\Hom_\Aopp(U, M)$.
\item
That the right hand side from \eqref{nawas1} or \eqref{nawas2} is actually well-defined might appear, at first glance, somewhat mysterious since on an arbitrary $U$-bimodule $N$ over a left Hopf algebroid $U$ there is in general {\em no} well-defined adjoint action of the kind $n \mapsto u_- n u_+$ for $n \in N, \ u \in U$. On the other hand, \eqref{nawas1} {\em does} indeed make sense: let us prove that the right hand side in \eqref{nawas1} does not depend on the choice of a representative for $u_+ \otimes_\Aopp u_-$ resp.\ $u_{(1)} \otimes_\ahha u_{(2)}$ in $U \otimes_\Aopp U$  resp.\ $U \otimes_\ahha U$. Whereas for the first case this is obvious by the $\Aop$-linearity of both $f$ and the left coproduct on $U$, the second case turns out to be slightly more tricky: first of all,
\begin{equation}
\label{umsonst}
\begin{split}
\gamma(a \lact f(-)) 
& 
{\overset{\scriptscriptstyle{
\eqref{romaedintorni}, \eqref{alleskleber} 
}}{=}}
\dot\gamma\big(\ddot\gamma\big(f(\cdot)\gve(a \blact \cdot\cdot)\big)\big) 
\\
& 
{\overset{\scriptscriptstyle{
\eqref{carrefour1} 
}}{=}}
\gamma\big(f(-_{(1)})\gve(a \blact -_{(2)})\big)
{\overset{\scriptscriptstyle{
}}{=}}
\gamma\big(f(a \blact -)\big),
\end{split}
\end{equation} 
where we used counitality and the $\Aop$-linearity of $f$ in the last step. With this identity, it is then simple to see that the right hand side of \eqref{nawas2} does not differ for two representatives
$\sum_i u'_i \ract a \otimes u''_i$ and  $\sum_i u'_i  \otimes a \lact u''_i$ of the coproduct $\gD_\ell(u)$ for an element $u \in U$; hence, \eqref{nawas2} is well-defined. 
\item
Anti Yetter-Drinfel'd contramodules over a (left or right) bialgebroid form a category, which again is unlikely to be monoidal. To start with, the respective base algebra of a (left or right) bialgebroid in general is not an aYD contramodule, and, as said before, usually not even a contramodule at all.
\end{enumerate}
\end{rem}

\begin{example}
In case of the group Hopf algebra $kG$ for an infinite discrete group $G$,
the concept of contramodules is illustrated in \cite[\S4.1]{Sha:OTAYDMCC}: 
the category of $kG$-contramodules turns out to be equivalent to that of $G$-graded vector spaces, whereas the category of aYD-contramodules over $G$ is equivalent to that of $G$-graded $G$-equivariant vector spaces, as is the category of aYD modules over $kG$. We refer to {\em loc.~cit.}~for details as well as for a discussion about stability and cyclic cohomology in this case.
\end{example}

To continue, we need to discuss how these coefficient modules transform if passing to the dual. 
Most of the statements in the following lemma are well-known but perhaps not explicitly written down (and proven) for the bialgebroid setting:

\begin{lem}
\label{sofocle2}
Let $(U,A)$ be a left bialgebroid.
\begin{enumerate}
\compactlist{99}
\item
\label{uhu1}
There is a functor  
$  
\ucomod \to \mathbf{Mod}\mbox{-}{U^*},
$
 and
%
if $\due U {}\ract$ is finitely generated
 $A$-projective,
 this functor is monoidal and has a quasi-inverse $\mathbf{Mod}\mbox{-}{U^*} \to \ucomod$ giving a (strict) monoidal equivalence
$$
 \ucomod \simeq \mathbf{Mod}\mbox{-}{U^*}
 $$
 of categories.
\item
\label{uhu2}
If $U_\ract$ is finitely generated $A$-projective, then 
there is a (strict) monoidal 
equivalence 
$$
\mathbf{Comod}\mbox{-}U^* \simeq {U\mbox{-}\mathbf{Mod}} 
$$
of categories.
\item
\label{uhu3}
If $U_\ract$ is finitely generated $A$-projective, then by means of the equivalences in (i) and (ii) 
there is a braided
(strict)
 monoidal 
equivalence 
$$
\yddual \simeq \yd.
$$
\item
\label{uhu4}
There is a 
functor
$
\mathbf{Contramod}\mbox{-}U \to {U^*\mbox{-}\mathbf{Mod}},  
$
which, if $U_\ract$ is finitely generated $A$-projective, has a quasi-inverse 
$ {U^*\mbox{-}\mathbf{Mod}} \to \mathbf{Contramod}\mbox{-}U$ 
giving an equivalence
$$
 \mathbf{Contramod}\mbox{-}U \simeq {U^*\mbox{-}\mathbf{Mod}}  
$$ 
of categories.
\item
\label{uhu5}
Let $U$ be additionally a left Hopf algebroid and let 
 $U_\ract$ be finitely generated $A$-projective.
A left $U^*$-module right $U^*$-comodule (with compatible right $A$-actions) is (stable) aYD if it is so as a left $U$-module right $U$-contramodule by means of the equivalences in \eqref{uhu2} and \eqref{uhu4}; hence one has an 
equivalence of categories
$$
{}_{\scriptscriptstyle{U^*}}\mathbf{aYD}^{\scriptscriptstyle{U^*}} \simeq {}_\uhhu \mathbf{aYD}^{\scriptscriptstyle{\rm contra-}\uhhu}
$$
between the categories of (stable) aYD modules over the right bialgebroid $U^*$ and (stable) aYD contramodules over $U$.
\end{enumerate}
\end{lem}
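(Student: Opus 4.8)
The substance of this lemma is part~\eqref{uhu5}; parts \eqref{uhu1}--\eqref{uhu4} merely transcribe into bialgebroid language the standard dualities between comodules, contramodules, and modules over the dual (as in \cite{KadSzl:BAODTEAD} and \S\ref{duals}), and I would dispatch those by writing the functors and their quasi-inverses on objects, using the finite projectivity exactly to identify $U$ with $U^{**}$ and to furnish a dual basis. The plan for \eqref{uhu5} is to show that, \emph{under} the identifications of \eqref{uhu2} and \eqref{uhu4}, the three axioms defining a $U^*$-aYD module pass one-to-one to the three axioms defining a $U$-aYD contramodule on the same underlying $A$-module $M$. To that end I would first pin down the two correspondences explicitly: by \eqref{uhu2} a right $U^*$-coaction $m \mapsto m^{(0)} \otimes_\ahha m^{(1)}$ becomes the left $U$-action $um := m^{(0)}\langle m^{(1)}, u\rangle$, while by \eqref{uhu4} a left $U^*$-action becomes the contraaction $\gamma(f) := \sum_i e^i\big(f(e_i)\big)$, with inverse $\phi m = \gamma\big(m\,\langle \phi, -\rangle\big)$. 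Since both functors are the identity on underlying $A$-modules and intertwine the induced bimodule structures of \eqref{pergolesi} and \eqref{gaeta}, the coincidence-of-bimodules axiom \eqref{orvieto1} over $U^*$ is literally \eqref{romaedintorni} over $U$; this step is pure unwinding.

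The core step is to prove that the aYD compatibility \eqref{orvieto2} over $U^*$ is equivalent to the aYD-contramodule compatibility \eqref{nawas1} (equivalently \eqref{nawas2}) over $U$. As \eqref{orvieto2} is an identity in $M \otimes_\ahha \due{U^*}\blact{}$, I would test it by pairing the $U^*$-leg with an arbitrary $u \in U$. Reading the $U^*$-action as $\gamma$ and the $U^*$-coaction as the $U$-action, the left-hand side $(vm)^{(0)}\langle (vm)^{(1)}, u\rangle$ collapses to $u\big(\gamma(f)\big)$, the left-hand side of \eqref{nawas1}. On the right-hand side I would insert the explicit translation map of Theorem~\ref{Hopfdual}, namely $v^- \otimes_\Aopp v^+ = \sum_i e^i \otimes_\Aopp (e_i \rightslice v)$, rewrite the coproduct $v^{(1)} \otimes v^{(2)}$ via its defining property \eqref{weiszjanich}--\eqref{trattovideo} as the transpose of the multiplication of $U$, and then eliminate the dual-basis sums using \eqref{schizzaestrappa1}--\eqref{schizzaestrappa2} and the relation \eqref{sondreck}. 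The Hopf-algebroid identities \eqref{Sch4} and \eqref{Sch6} for the translation map $u_+ \otimes_\Aopp u_-$ on $U$ are precisely what recast $v^{(1)+}m^{(0)} \otimes v^{(2)} m^{(1)} v^{(1)-}$, after pairing with $u$, into the shape $\gamma\big(u_{+(2)} f(u_-(-)u_{+(1)})\big)$; at that point \eqref{orvieto2} has become \eqref{nawas1}. As the finite projectivity makes pairing against all $u \in U$ faithful, every manipulation is reversible and the converse holds as well.

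For stability I expect only a short computation. Writing the $U^*$-action via $\gamma$ and the coaction via the $U$-action $um = m^{(0)}\langle m^{(1)}, u\rangle$ from \eqref{uhu2}, the $U^*$-stability $m^{(1)}m^{(0)} = m$ turns into $\gamma\big(u \mapsto um\big) = m$, that is, $\gamma\big((-)m\big) = m$, which is exactly \eqref{stablehalt}; running this backwards transports stability the other way. Assembling the three equivalences (bimodule axiom, aYD compatibility, stability) upgrades the underlying identification to the asserted equivalence ${}_{\scriptscriptstyle{U^*}}\mathbf{aYD}^{\scriptscriptstyle{U^*}} \simeq {}_\uhhu \mathbf{aYD}^{\scriptscriptstyle{\rm contra-}\uhhu}$, restricting to the stable subcategories on both sides.

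The step I expect to be the genuine obstacle is the middle one, because it forces us to juggle the two different translation maps --- that of $U$ and that of $U^*$ --- simultaneously, and to ensure that every rewriting respects the right $A$-linearity \eqref{passionant} of $\gamma$ together with the well-definedness subtleties flagged after Definition~\ref{chelabertaschen1}, where the adjoint-type expression $u_-(-)u_+$ is only defined through $\gamma$ in the first place. It is exactly here that the explicit formula \eqref{viviverde2} is indispensable: without it one could argue abstractly that \emph{some} compatibility is carried across the equivalence, but not that it is precisely \eqref{nawas1}.
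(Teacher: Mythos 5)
Your proposal is correct and follows essentially the same route as the paper: explicit dual-basis formulas for the functors in (i)--(iv) (the paper's \eqref{lesbrigittes1}--\eqref{lesbrigittes2}, \eqref{vetrorotto3}--\eqref{vetrorotto4}, \eqref{bellitalia}, \eqref{mistmistmist}), and for (v) a transfer of \eqref{orvieto2} into \eqref{nawas1} by pairing the $U^*$-leg against arbitrary $u \in U$, driven by the explicit translation map on $U^*$ through \eqref{sondreck} together with the left Hopf algebroid identities \eqref{Sch2}--\eqref{Sch4} and the dual-basis decompositions, with stability handled by exactly the short computation $m^{(1)} \cdot m^{(0)} = \gamma\big((-)m\big)$ you give.
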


\begin{proof} 
The first four parts of this lemma are not too surprising results, some of them well-known, which is why we only give some hints and not discuss, for example, the respective morphisms.

Part (i):
this was proven, along with an analogous consideration for the left dual, in \cite[Thm.~3.1.11]{Kow:HAATCT} and also appeared earlier in \cite[\S5]{Schau:DADOQGHA}; see \cite[Prop.~4.2.1]{CheGavKow:DFOLHA} as well. For later use, we mention that given an ($\Aop$-balanced) right $U^*$-module structure 
$m \otimes_\Aopp \phi \mapsto m\phi$ on a left $A$-module $M$, then 
\begin{equation}
\label{lesbrigittes1}
m \mapsto \textstyle\sum_i e_i \otimes_\ahha me^i
\end{equation}
defines a left $U$-comodule structure on $M$, where 
$\{e_i\}_{1 \leq i \leq n} \in U, \ \{ e^i\}_{1 \leq i \leq n} \in U^*$ is a dual basis; vice versa, for a left $U$-comodule $M$, the assignment
\begin{equation}
\label{lesbrigittes2}
m\phi := \langle \phi, m_{(-1)}\rangle m_{(0)}
\end{equation}
defines a right $U^*$-module structure on $M$.

Part (ii):
similarly to the first part, consider the map
\begin{equation}
\label{vetrorotto1}
M \otimes_\ahha \due {U^*} \blact {} \to \Hom_\Aopp(U,M), \quad m \otimes_\ahha \phi 
\mapsto \big\{ u \mapsto m \langle \phi, u \rangle \big\},
\end{equation}
which is an isomorphism if $U_\ract$ is finitely generated $A$-projective,
with inverse
\begin{equation}
\label{vetrorotto2}
 \Hom_\Aopp(U,M) \to M \otimes_\ahha U^*, \quad f \mapsto \textstyle\sum_i f(e_i) \otimes_\ahha e^i,
\end{equation}
where $\{e_i\}_{1 \leq i \leq n} \in U, \ \{ e^i\}_{1 \leq i \leq n} \in U^*$ is a dual basis.
As is a straightforward check using \eqref{trattovideo} and the coassociativity of $M$, the map
\begin{equation}
\label{vetrorotto3}
U_\bract \otimes_\ahha M \to M, \quad u \otimes_\ahha m \mapsto um := m^{(0)} \langle m^{(1)}, u \rangle
\end{equation}
defines a {\em left} $U$-module structure on $M$. Conversely, if $u \otimes_\ahha m \mapsto um$ is a left $U$-module structure, then
\begin{equation}
\label{vetrorotto4}
M \to M \otimes_\ahha \due {U^*} \blact {}, \quad m \mapsto \textstyle\sum_i e_im \otimes_\ahha e^i 
\end{equation}
defines a right $U^*$-comodule structure on $M$, which is easily verified by means of \eqref{trattovideo}. That both constructions are mutually inverse is easily seen by the decomposition 
\eqref{schizzaestrappa1}
as well as the forgetful functor \eqref{pergolesi}. 
As for the claim that the so-given functor $F:\mathbf{Comod}\mbox{-}U^* \to \umod$ is monoidal, recall that the monoidal structure on $\umod$ is given by the diagonal action, that is, the tensor product $M' \otimes_\ahha N'$ of two $U$-modules is a $U$-module by defining the left action $u(m' \otimes_\ahha n') := u_{(1)}m' \otimes_\ahha u_{(2)} n'$, whereas the tensor product $M \otimes_\ahha N$ of two right $U^*$-comodules becomes a right $U^*$-comodule by the codiagonal coaction, that is, by the right coaction $m \otimes_\ahha n \mapsto (m^{(0)} \otimes_\ahha n^{(0)}) \otimes_\ahha m^{(1)} n^{(1)}$. The prescription \eqref{vetrorotto3} then defines on $M \otimes_\ahha N$ the left $U$-action
\begin{equation*}
\begin{split}
u(m \otimes_\ahha n) &= (m^{(0)} \otimes_\ahha n^{(0)})\langle m^{(1)} n^{(1)}, u \rangle 
\\
&
{\overset{\scriptscriptstyle{
\eqref{LDMon}
}}{=}} 
m^{(0)} \otimes_\ahha n^{(0)} \langle n^{(1)}, \langle m^{(1)}, u_{(1)} \rangle \lact u_{(2)} \rangle 
\\
&
{\overset{\scriptscriptstyle{
\eqref{duedelue}
}}{=}} 
m^{(0)} \otimes_\ahha n^{(0)} \langle n^{(1)} \ract  \langle m^{(1)}, u_{(1)} \rangle, u_{(2)} \rangle
\\
&
{\overset{\scriptscriptstyle{
\eqref{Takeuchicoaction}
}}{=}}
m^{(0)} \langle m^{(1)}, u_{(1)} \rangle \otimes_\ahha n^{(0)} \langle n^{(1)}, u_{(2)} \rangle
{\overset{\scriptscriptstyle{
}}{=}}
u_{(1)} m \otimes_\ahha u_{(2)} n,
\end{split}
\end{equation*}
hence $F(M \otimes_\ahha N) = FM \otimes_\ahha FN$, and that $F(A) = A$ for the unit object in both categories is also easy to see.

Part (iii):
for this, use the first two parts of this lemma and show that under these equivalences the (right bialgebroid) Yetter-Drinfel'd condition \eqref{druento2} follows if the (left bialgebroid) Yetter-Drinfel'd condition \eqref{druento1} is fulfilled (and vice versa, but we only show the first case). So, let $M \in \yd$ and using the notation $\phi = \langle \phi, \cdot \rangle$ for an element in $U^*$, the left hand side in \eqref{druento2} can then for any $u \in U$ be expressed as
\begin{eqnarray*}
&&  (m  \phi^{(2)})^{(0)} \otimes_\ahha \langle \phi^{(1)} (m \phi^{(2)})^{(1)}, u \rangle 
\\
&
{\overset{\scriptscriptstyle{
\eqref{LDMon}
}}{=}} 
&
 (m \phi^{(2)})^{(0)} \otimes_\ahha \big\langle (m \phi^{(2)})^{(1)}, \langle \phi^{(1)} , u_{(1)} \rangle \lact u_{(2)} \big\rangle 
\\
&
{\overset{\scriptscriptstyle{
\eqref{lesbrigittes2}
}}{=}} 
&
(\langle \phi^{(2)}, m_{(-1)} \rangle m_{(0)} )^{(0)} \otimes_\ahha \big\langle (\langle \phi^{(2)}, m_{(-1)} \rangle m_{(0)})^{(1)}, \langle \phi^{(1)} , u_{(1)} \rangle \lact u_{(2)} \big\rangle 
\\
&
{\overset{\scriptscriptstyle{
\eqref{duedelue}, \eqref{tellmemore}
}}{=}} 
&
{m_{(0)}}^{(0)} \otimes_\ahha \Big\langle {m_{(0)}}^{(1)}, \big\langle \phi^{(1)} , \langle \phi^{(2)}, m_{(-1)} \rangle \blact u_{(1)} \big\rangle \lact u_{(2)} \Big\rangle 
\\
&
{\overset{\scriptscriptstyle{
\eqref{trattovideo}
}}{=}} 
&
{m_{(0)}}^{(0)} \otimes_\ahha \big\langle {m_{(0)}}^{(1)}, \langle \phi, u_{(1)} m_{(-1)} \rangle \lact u_{(2)} \big\rangle 
\\
&
{\overset{\scriptscriptstyle{
\eqref{Takeuchicoaction}, \eqref{vetrorotto4}
}}{=}} 
&
\textstyle\sum_i \langle \phi, u_{(1)} m_{(-1)} \rangle \lact (e_i m_{(0)}) \otimes_\ahha \langle e^i, u_{(2)} \rangle 
\\
&
{\overset{\scriptscriptstyle{
}}{=}} 
&
\textstyle\sum_i \langle \phi, u_{(1)} m_{(-1)} \rangle \lact \big((e_i \ract \langle e^i, u_{(2)} \rangle )  m_{(0)}\big) \otimes_\ahha 1 
\\
&
{\overset{\scriptscriptstyle{
\eqref{schizzaestrappa1}
}}{=}} 
&
\langle \phi, u_{(1)} m_{(-1)} \rangle \lact (u_{(2)}m_{(0)}) \otimes_\ahha 1 
\\
&
{\overset{\scriptscriptstyle{
\eqref{druento1}
}}{=}} 
&
\langle \phi, (u_{(1)} m)_{(-1)} u_{(2)} \rangle \lact (u_{(1)}m)_{(0)} \otimes_\ahha 1 
\\
&
{\overset{\scriptscriptstyle{
\eqref{trattovideo}
}}{=}}
&
\big\langle \phi^{(1)}, \langle \phi^{(2)} , u_{(2)} \rangle \blact (u_{(1)} m)_{(-1)}  \big\rangle \lact (u_{(1)}m)_{(0)} \otimes_\ahha 1 
\\
&
{\overset{\scriptscriptstyle{
}}{=}}
&
\big\langle \phi^{(1)}, (u_{(1)} m)_{(-1)}  \big\rangle (u_{(1)}m)_{(0)} \otimes_\ahha \langle \phi^{(2)} , u_{(2)} \rangle 
\\
&
{\overset{\scriptscriptstyle{
\eqref{lesbrigittes2}
}}{=}}
&
(u_{(1)} m) \phi^{(1)}  \otimes_\ahha \langle \phi^{(2)} , u_{(2)} \rangle 
\\
&
{\overset{\scriptscriptstyle{
\eqref{vetrorotto3}
}}{=}}
&
(m^{(0)} \langle m^{(1)}, u_{(1)} \rangle) \phi^{(1)}  \otimes_\ahha \langle \phi^{(2)} , u_{(2)} \rangle 
\\
&
{\overset{\scriptscriptstyle{
\eqref{tellmemore}
}}{=}}
&
m^{(0)} \phi^{(1)}  \otimes_\ahha \big\langle \phi^{(2)} \ract  \langle m^{(1)}, u_{(1)} \rangle , u_{(2)} \big\rangle 
\\
&
{\overset{\scriptscriptstyle{
\eqref{duedelue}, \eqref{LDMon}
}}{=}}
&
m^{(0)} \phi^{(1)} \otimes_\ahha \langle m^{(1)} \phi^{(2)}, u \rangle,
\end{eqnarray*}
where in step three we additionally used the $A$-linearity of the right coaction, in step six that the left and right $A$-actions on $U$ commute, and in the tenth step the left bialgebroid comodule version of the subspace property \eqref{Takeuchicoaction}. Since this computation holds for arbitrary $u \in U$, Equation \eqref{druento2} follows for the right bialgebroid $(U^*, A)$, as desired. As for the claim that the equivalence is braided, similar to the Hopf algebra case (see, for example, \cite{AndGra:BHAONAFG}) we have to show that the respective braidings $\gs_\uhhu$ in $\yd$ and $\gs_{\scriptscriptstyle{U^*}}$ in $\yddual$ commute with the functor $F: \yddual \to \yd$, that is, for objects $M, N \in \yd$ one has $F\gs_\uhhu = \gs_{\scriptscriptstyle{U^*}} F: M \otimes_\ahha N \to N \otimes_\ahha M$. One computes
\begin{equation*}
\begin{split}
\gs_{\scriptscriptstyle{U^*}}(m \otimes_\ahha n) 
&=
n^{(0)} \otimes_\ahha mn^{(1)} 
{\overset{\scriptscriptstyle{
\eqref{vetrorotto4}
}}{=}}
\textstyle\sum_i e_in \otimes_\ahha me^i 
{\overset{\scriptscriptstyle{
\eqref{lesbrigittes2}
}}{=}}
\textstyle\sum_i e_in \otimes_\ahha \langle e^i, m_{(-1)} \rangle m_{(0)}
\\
&= 
 \textstyle\sum_i (e_i \ract  \langle e^i, m_{(-1)} \rangle) n \otimes_\ahha m_{(0)}
 {\overset{\scriptscriptstyle{
\eqref{schizzaestrappa1}
}}{=}}
 m_{(-1)} n \otimes_\ahha m_{(0)}
=
\gs_\uhhu(m \otimes_\ahha n), 
\end{split}
\end{equation*}
where we wrote $m \otimes_\ahha n$ for both an element in $M \otimes_\ahha N$ as well as for the corresponding element in $F(M) \otimes_\ahha F(N)$.

Part (iv):
here, we simply make use of the map \eqref{vetrorotto1} along with \eqref{vetrorotto2} again: if $\gamma: \Hom_\Aopp(U,M) \to M$ is a right $U$-contramodule and $m \langle \phi, \cdot \rangle$ denotes the element in $ \Hom_\Aopp(U,M)$ defined by $ u \mapsto m \langle \phi, u \rangle $ for $\phi \in U^*$ and $m \in M$, then 
\begin{equation}
\label{bellitalia}
\due {U^*} \blact {} \otimes_\Aopp M_\ahha \to M, \quad \phi \cdot m := \gamma(m \langle \phi, - \rangle)
\end{equation}
yields a left $U^*$-module structure on $M$. Indeed, for $\phi, \psi \in U^*$ and $m \in M$, one has
\begin{equation*}
\begin{split}
(\phi\psi) \cdot m &= \gamma(m \langle \phi\psi, - \rangle)
\\
&
{\overset{\scriptscriptstyle{
\eqref{LDMon}
}}{=}} 
\gamma(m \langle \psi, \langle  \phi, (-)_{(1)} \rangle \lact (-)_{(2)} \rangle)
\\
&
{\overset{\scriptscriptstyle{
\eqref{carrefour1}
}}{=}} 
\dot\gamma\big(\ddot\gamma(m \langle \psi, \langle \phi, \cdot \rangle \lact \cdot\cdot \rangle)\big)
\\
&
{\overset{\scriptscriptstyle{
\eqref{passionant}
}}{=}} 
\dot\gamma\big(\ddot\gamma(m \langle \psi,\cdot\cdot \rangle)  \langle \phi, \cdot \rangle \big)
\\
&
{\overset{\scriptscriptstyle{
\eqref{bellitalia}
}}{=}} 
\gamma\big((\psi \cdot m) \langle \phi, - \rangle \big)
= \phi \cdot (\psi \cdot m).
\end{split}
\end{equation*}
Conversely, if $U_\ract$ is finitely generated $A$-projective and 
$\due {U^*} \blact {} \otimes_\Aopp M \to M$, 
$\phi \otimes_\Aopp m \mapsto \phi \cdot m$ a left $U^*$-action on $M$, then 
\begin{equation}
\label{mistmistmist}
\gamma(f) := \textstyle\sum_i e^i \cdot f(e_i), \qquad f \in \Hom_\Aopp(U,M)
\end{equation}
gives a right $U$-contraaction: we only verify \eqref{carrefour1} and leave the rest to the reader. To this end, first recall from, {\it e.g.}, \cite[\S3]{Kow:HAATCT} the isomorphism 
\begin{equation*}
\begin{split}
U_\ract \otimes_\ahha \due U \lact {} &\to \Hom_\ahha(\due {U^*} \blact {} \otimes_\Aopp U^*_\ract, A), 
\\
u \otimes_\ahha v &\mapsto \{\phi \otimes_\Aopp \psi \mapsto \langle \psi, \langle \phi, u \rangle \lact v \rangle \}
\end{split}
\end{equation*} 
with inverse
$
\Psi \mapsto \textstyle\sum_{i,j} e_i \otimes_\ahha e_j \ract \Psi(e^j \otimes_\Aopp e^i),
$
which allows to write the coproduct on $U$ as
$$
\gD_\ell u = \textstyle\sum_{i,j} e_i \otimes_\ahha e_j \ract \langle e^i e^j , u \rangle.
$$
We then have for $g \in \Hom_\Aopp(U_\ract \otimes_\ahha \due U \lact {},M)$
\begin{equation*}
\begin{array}{rcl}
\gamma\big(g(-_{(1)} \otimes_\ahha -_{(2)})\big)
&
{\overset{\scriptscriptstyle{
\eqref{mistmistmist}
}}{=}} 
&
 \textstyle\sum_{k} e^k \cdot g(e_{k (1)} \otimes_\ahha e_{k (2)})
\\
&
{\overset{\scriptscriptstyle{
}}{=}}
&
 \textstyle\sum_{i,j,k} e^k \cdot g( e_i \otimes_\ahha e_j \ract \langle e^i e^j , e_k \rangle )
\\
&
{\overset{\scriptscriptstyle{
}}{=}}
&
 \textstyle\sum_{i,j,k} e^k \cdot \big( g( e_i \otimes_\ahha e_j) \langle e^i e^j , e_k \rangle \big)
\\
&
{\overset{\scriptscriptstyle{
\eqref{pergolesi}
}}{=}}
&
 \textstyle\sum_{i,j,k} (\langle e^i e^j , e_k \rangle \blact e^k) \cdot g( e_i \otimes_\ahha e_j)
\\
&
{\overset{\scriptscriptstyle{
\eqref{schizzaestrappa2}
}}{=}}
&
 \textstyle\sum_{i,j} (e^i e^j) \cdot g( e_i \otimes_\ahha e_j)
\\
&
{\overset{\scriptscriptstyle{
\eqref{mistmistmist}
}}{=}}
&
 \textstyle\sum_{i} e^i \cdot \big(\gamma( g( e_i \otimes_\ahha -)) \big)
=  \dot\gamma\big(\ddot\gamma( g(\cdot \otimes_\ahha \cdot\cdot)) \big),
\end{array}
\end{equation*}
that is, we showed \eqref{carrefour1}.

Part (v):
here, we only show that the condition \eqref{orvieto2} is transformed into \eqref{nawas1} when applying the functors $\mathbf{Comod}\mbox{-}U^* \to {U\mbox{-}\mathbf{Mod}}$ and ${U^*\mbox{-}\mathbf{Mod}} \to \mathbf{Contramod}\mbox{-}U$ (and their quasi-inverses) from parts \eqref{uhu2} and \eqref{uhu4}, and leave the missing details to the reader. So, let $M$ be an aYD module over $U^*$, that is, a left $U^*$-module and right $U^*$-comodule that fulfils \eqref{orvieto2}, and let as before  $\{e_i\}_{1 \leq i \leq n} \in U, \ \{ e^i\}_{1 \leq i \leq n} \in U^*$ be a dual basis. We compute
\begin{equation*}
\begin{array}{rcl}
u\big(\gamma(f)\big) 
&
\!\!\!\!\!\!\!\!\!\!
{\overset{\scriptscriptstyle{
\eqref{mistmistmist}
}}{=}} 
&
\!\!\!\!\!\!
\textstyle\sum_i u \big(e^i \cdot f(e_i)\big)
\\
&
\!\!\!\!\!\!\!\!\!\!
{\overset{\scriptscriptstyle{
\eqref{vetrorotto3}
}}{=}}
&
\!\!\!\!\!\!
\textstyle\sum_i (e^i \cdot f(e_i))^{(0)} \langle (e^i \cdot f(e_i))^{(1)}, u \rangle
\\
&
\!\!\!\!\!\!\!\!\!\!
{\overset{\scriptscriptstyle{
\eqref{orvieto2}
}}{=}}
&
\!\!\!\!\!\!
\textstyle\sum_i \big( e^{i(1)+} \cdot f(e_i)^{(0)}\big) \langle e^{i(2)} f(e_i)^{(1)} e^{i(1)-}, u \rangle
\\
&
\!\!\!\!\!\!\!\!\!\!
{\overset{\scriptscriptstyle{
\eqref{orvieto1}, \eqref{LDMon}
}}{=}}
&
\!\!\!\!\!\!
\textstyle\sum_i \big( e^{i(1)+} \ract  \big\langle  e^{i(1)-}, \langle e^{i(2)} f(e_i)^{(1)}, u_{(1)} \rangle \lact u_{(2)} \big\rangle \big) \cdot f(e_i)^{(0)}
\\
&
\!\!\!\!\!\!\!\!\!\!
{\overset{\scriptscriptstyle{
\eqref{duedelue}, \eqref{Rch1}, \eqref{orvieto1}
}}{=}}
&
\!\!\!\!\!\!
\textstyle\sum_i \big( e^{i(1)+} \ract  \langle  e^{i(1)-}, u_{(2)} \rangle \big) \cdot \big( f(e_i)^{(0)}  \langle e^{i(2)} f(e_i)^{(1)}, u_{(1)} \rangle\big)
\\
&
\!\!\!\!\!\!\!\!\!\!
{\overset{\scriptscriptstyle{
\eqref{LDMon}
}}{=}}
&
\!\!\!\!\!\!
\textstyle\sum_i \big( e^{i(1)+} \ract  \langle  e^{i(1)-}, u_{(3)} \rangle \big) \cdot \big( f(e_i)^{(0)}  \big\langle  f(e_i)^{(1)}, \langle e^{i(2)}, u_{(1)}\rangle \lact u_{(2)} \big\rangle\big)
\\
&
\!\!\!\!\!\!\!\!\!\!
{\overset{\scriptscriptstyle{
\eqref{vetrorotto3}
}}{=}}
&
\!\!\!\!\!\!
\textstyle\sum_i \big( e^{i(1)+} \ract  \langle  e^{i(1)-}, u_{(3)} \rangle \big) \cdot \big(\langle e^{i(2)}, u_{(1)}\rangle \lact u_{(2)} f(e_i)\big)
\\
&
\!\!\!\!\!\!\!\!\!\!
{\overset{\scriptscriptstyle{
\eqref{bellitalia}, \eqref{sondreck}
}}{=}}
&
\!\!\!\!\!\!
\textstyle\sum_i \gamma \Big(\big(\langle e^{i(2)}, u_{(1)}\rangle \lact u_{(2)} f(e_i)
\big) \langle u_{(3)} \rightslice e^{i(1)}, - \rangle \Big)
\\
&
\!\!\!\!\!\!\!\!\!\!
{\overset{\scriptscriptstyle{
}}{=}}
&
\!\!\!\!\!\!
\textstyle\sum_i \gamma \Big(\langle e^{i(2)}, u_{(1)}\rangle \lact \big(u_{(2)} f(e_i)
 \langle u_{(3)} \rightslice e^{i(1)}, - \rangle \big)\Big)
\\
&
\!\!\!\!\!\!\!\!\!\!
{\overset{\scriptscriptstyle{
\eqref{umsonst}
}}{=}}
&
\!\!\!\!\!\!
\textstyle\sum_i \gamma \Big(u_{(2)} f(e_i)
\langle u_{(3)} \rightslice e^{i(1)}, \langle e^{i(2)}, u_{(1)}\rangle \blact - \rangle \Big)
\\
&
\!\!\!\!\!\!\!\!\!\!
{\overset{\scriptscriptstyle{
\eqref{LDMon}
}}{=}}
&
\!\!\!\!\!\!
\textstyle\sum_i \gamma \big( u_{(2)} f(e_i)
\langle u_{(3)} \rightslice e^{i}, (-)u_{(1)} \rangle \big)
\\
&
\!\!\!\!\!\!\!\!\!\!
{\overset{\scriptscriptstyle{
\eqref{Sch3}
}}{=}}
&
\!\!\!\!\!\!
\textstyle\sum_i \gamma \big( (u_{(2)} \rightslice f) (u_{(3)} e_i)
\langle u_{(4)} \rightslice e^{i}, (-)u_{(1)} \rangle \big)
\\
&
\!\!\!\!\!\!\!\!\!\!
{\overset{\scriptscriptstyle{
\eqref{gianduiotto1a}
}}{=}}
&
\!\!\!\!\!\!
\textstyle\sum_i \gamma \Big( \big(u_{(2)} \rightslice f) \big(( u_{(3)}  \ract \gve( \langle e^i, u_{(4)-} (-) u_{(1)} \rangle \blact u_{(4)+}) \big)  e_i \big) \Big) 
\\
&
\!\!\!\!\!\!\!\!\!\!
{\overset{\scriptscriptstyle{
\eqref{Sch4}
}}{=}}
&
\!\!\!\!\!\!
\textstyle\sum_i \gamma \Big( (u_{+(2)} \rightslice f) \big(u_{+(3)} (e_i \ract \langle e^i, u_{-} (-) u_{+(1)} \rangle)  \big) \Big) 
\\
&
\!\!\!\!\!\!\!\!\!\!
{\overset{\scriptscriptstyle{
\eqref{schizzaestrappa1}
}}{=}}
&
\!\!\!\!\!\!
\gamma \big( (u_{+(2)} \rightslice f) (u_{+(3)}u_{-} (-) u_{+(1)}) \big) 
\\
&
\!\!\!\!\!\!\!\!\!\!
{\overset{\scriptscriptstyle{
\eqref{Sch2}
}}{=}}
&
\!\!\!\!\!\!
\gamma \big( (u_{(2)} \rightslice f) ((-) u_{(1)}) \big),
\end{array}
\end{equation*}
which is \eqref{nawas2} resp.\ \eqref{nawas1}, hence $M$ is also an aYD contramodule over $U$.
As for stability, if $M$ is stable in ${}_{\scriptscriptstyle{U^*}}\mathbf{aYD}^{\scriptscriptstyle{U^*}}$, one has
$$
m = m^{(1)} \cdot m^{(0)} 
{\overset{\scriptscriptstyle{
\eqref{bellitalia}
}}{=}}
\gamma(  m^{(0)} \langle m^{(1)}, - \rangle ) 
{\overset{\scriptscriptstyle{
\eqref{vetrorotto3}
}}{=}}
\gamma((-)m), \quad \forall \ m \in M, 
$$
hence it is stable in ${}_\uhhu \mathbf{aYD}^{\scriptscriptstyle{\rm contra-}\uhhu}$ (and vice versa).
\end{proof}

\begin{rem}
\label{polline}
\
\begin{enumerate}
\compactlist{99}
\item
The first two parts look exactly the way it would be for bialgebras and might therefore appear somehow banal, but we do not want to deprive the reader of the fact that starting with the {\em left dual} $U_* = \Hom_\ahha(\due U \lact {}, A)$ one perhaps somewhat unexpectedly obtains a functor 
$\comodu \to  \mathbf{Mod}\mbox{-}{U_*}$, see \cite[Prop.~4.2.1]{CheGavKow:DFOLHA}.
\item
The functor 
$
\mathbf{Contramod}\mbox{-}U \to {U^*\mbox{-}\mathbf{Mod}}  
$
is not monoidal as in general ${U^*\mbox{-}\mathbf{Mod}}$ resp.\ $\mathbf{Contramod}\mbox{-}U$ is known not to be monoidal resp.\ not known to be monoidal, as mentioned before.
\end{enumerate}
\end{rem}

The idea behind the subsequent proposition is as follows: as briefly mentioned in Remark \ref{nunzia}, whereas Yetter-Drinfel'd modules form a monoidal category $\yd$, {\em anti} Yetter-Drinfel'd modules generally do not; on the other hand, they do constitute a module category over $\yd$, that is to say, the tensor product of an aYD module with a YD module yields an aYD module again, see \cite[Lem.~6.1]{Kow:GABVSOMOO} in the bialgebroid context. Here, along with the codiagonal left coaction on the tensor product, one uses the right action from \eqref{lemma3}.
If $U_\ract$ is finitely generated projective, in quite the same way tensoring a left $U^*$-module with a right $U^*$-module gives a {\em left} module again, which in view of Lemma \ref{sofocle2}, parts \eqref{uhu4} and \eqref{uhu1} amounts to tensoring a right $U$-contramodule with a left $U$-comodule to obtain a right $U$-contramodule again. Using again Lemma \ref{sofocle2}, parts \eqref{uhu3} and \eqref{uhu5}, the aYD property then translates by saying that the aYD contramodules form a model category over $\yd$ as well (where the left $U$-module structure on the tensor product is given by the diagonal left action):

\begin{prop}
\label{jetztnkaffee}
Let $(U,A)$ be a left Hopf algebroid with $U_\ract$ finitely generated $A$-projective.
\begin{enumerate}
\compactlist{99}
\item
Let $M \in \mathbf{Contramod}\mbox{-}U$ and $N \in \ucomod$. Then $M \otimes_\ahha N$ is a right $U$-contramodule again.
\item
\label{heuteabendtiberinselkino}
Let $M \in  {}_\uhhu \mathbf{aYD}^{\scriptscriptstyle{\rm contra-}\uhhu}$ and $N \in \yd$. Then $M \otimes_\ahha N$ with its diagonal left $U$-action is an aYD contramodule again.
\end{enumerate}
\end{prop}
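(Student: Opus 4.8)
The plan is to carry both statements over to the dual right Hopf algebroid $U^*$, where they become the preservation of the relevant module-category structures, and then to pull everything back through the equivalences of Lemma \ref{sofocle2}. Throughout I would fix a dual basis $\{e_i\},\{e^i\}$ and use that $U^*$ is a right Hopf algebroid by Theorem \ref{Hopfdual}, so that the translation map \eqref{viviverde2} is at my disposal.

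For the first claim, Lemma \ref{sofocle2} \eqref{uhu4} turns the right $U$-contramodule $M$ into a left $U^*$-module, while \eqref{uhu1} turns the left $U$-comodule $N$ into a right $U^*$-module. I would then record the right-bialgebroid counterpart of the adjoint action \eqref{lemma3}: for a right Hopf algebroid, a left module $M$ and a right module $N$ combine into a left module on $M \otimes_\ahha N$ by
\[
\phi \cdot (m \otimes_\ahha n) := (\phi^+ \cdot m) \otimes_\ahha (n\, \phi^-),
\]
with $\phi^- \otimes_\Aopp \phi^+$ the translation map \eqref{viviverde2}; associativity is immediate from \eqref{Rch6} together with the two module axioms, and unitality is straightforward since the translation map sends the unit $\gve$ to $\gve \otimes_\Aopp \gve$. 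Pulling this left $U^*$-module structure back along the quasi-inverse of \eqref{uhu4}, i.e.\ through formula \eqref{mistmistmist}, yields the sought right $U$-contraaction on $M \otimes_\ahha N$. A reader preferring an intrinsic argument may instead write this contraaction out directly and check \eqref{carrefour1}--\eqref{carrefour2}, a routine computation parallel to the one in the proof of Lemma \ref{sofocle2} \eqref{uhu4}.

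For the second claim I would additionally retain the left $U$-actions on $M$ and $N$ and the left $U$-coaction on $N$. By Lemma \ref{sofocle2} \eqref{uhu5} the datum $M$ becomes an aYD module over $U^*$, and by \eqref{uhu3} the datum $N$ becomes a Yetter-Drinfel'd module over $U^*$. The right-bialgebroid version of \cite[Lem.~6.1]{Kow:GABVSOMOO}---obtained from the left-bialgebroid statement by the opposite construction, or reproven verbatim---then says that the tensor product of an aYD module over $U^*$ with a YD module over $U^*$, carrying the left $U^*$-action above and the codiagonal right $U^*$-coaction, again satisfies the aYD condition \eqref{orvieto2}. It remains to transport this back: by \eqref{uhu4} the left $U^*$-action becomes the contraaction already produced in the first part, and by the monoidality of \eqref{uhu2} proven in the excerpt the codiagonal right $U^*$-coaction becomes precisely the diagonal left $U$-action $u(m \otimes_\ahha n) = u_{(1)}m \otimes_\ahha u_{(2)}n$ named in the statement. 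Hence \eqref{orvieto2} translates into the aYD-contramodule compatibility \eqref{nawas1}, and $M \otimes_\ahha N$ is an aYD contramodule over $U$.

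The main obstacle is not any single computation but the simultaneous bookkeeping of several equivalences: one must confirm that the left $U^*$-module and right $U^*$-comodule structures manufactured on the dual side are carried by the functors of Lemma \ref{sofocle2} exactly onto the contraaction of the first part and onto the named diagonal $U$-action, and here the (braided) monoidality of \eqref{uhu2}--\eqref{uhu3} does the decisive work. The one place genuinely requiring care is the passage of \cite[Lem.~6.1]{Kow:GABVSOMOO} from left to right bialgebroids, since that reference is stated only for the former; once the opposite-bialgebroid dictionary is invoked, the aYD module-category property for $U^*$ follows and the proposition is complete.
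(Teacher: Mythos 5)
Your proposal is correct, and for part (i) it is essentially the paper's own proof: the paper also introduces the adjoint left $U^*$-action $\phi\cdot(m \otimes_\ahha n) = \phi^+ m \otimes_\ahha n\phi^-$ on $M \otimes_\ahha N$ and pulls it back through \eqref{mistmistmist} to produce the contraaction, which it then simplifies to the explicit formula \eqref{ludovisi}. For part (ii), however, you take a genuinely different route. The paper does \emph{not} transport the module-category property from the dual side; instead it verifies the compatibility \eqref{nawas1} for the contraaction \eqref{ludovisi} by a direct two-page computation on the $U$-side, using the Yetter-Drinfel'd condition \eqref{druento1} for $N$, the aYD contramodule condition for $\gamma_\emme$, and the Schauenburg identities \eqref{Sch1}--\eqref{Sch9}. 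Your route is the one the paper itself sketches as motivation in the paragraph preceding the proposition (aYD modules over $U^*$ form a module category over YD modules, and everything dualises), and it is conceptually cleaner; but be aware that the work does not disappear, it is relocated: the right-bialgebroid analogue of \cite[Lem.~6.1]{Kow:GABVSOMOO} exists nowhere in the literature, so you must either reprove it (a computation of comparable length, now with the relations \eqref{Rch1}--\eqref{Rch9}) or derive it by the opposite construction --- and note that the correct dictionary is op-\emph{cop}, not merely op, since only $U^{\mathrm{op,cop}}$ of a left Hopf algebroid is a right Hopf algebroid over a right bialgebroid, and one must also check that the aYD and YD notions of \eqref{orvieto2} and \eqref{druento2} are precisely the translates of the left-handed ones used in that reference. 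What the paper's computational proof buys in exchange for its length is self-containedness and the concrete formula \eqref{ludovisi}, which is what gets used later (e.g.\ in the remark producing \eqref{ludovisi2} and in the proof of Theorem \ref{corsoumbertoI}); what your proof buys is a transparent explanation of \emph{why} the statement is true, at the cost of the bookkeeping you correctly identify as the delicate point.
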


\begin{rem}
One might be tempted to think that this is somehow also true in the non-finite case but at present we did not manage to prove this. Also, it is not clear whether the possible stability of $M$ implied stability of $M \otimes_\ahha N$.
\end{rem}

\begin{proof}[Proof of Proposition \ref{jetztnkaffee}]
Let $f \in \Hom_\Aopp(U, M \otimes_\ahha N)$. For $u \in U$, one may write this as $\sum_i f(u)'_i \otimes_\ahha f(u)''_i \in M \otimes_\ahha N$, but to reduce the quantity of sub- or superscripts in order to lighten notation, we will simply denote this as $f(u)' \otimes_\ahha f(u)''$, with summation understood.

Part (i): we claim that for $ M \in \mathbf{Contramod}\mbox{-}U$ and $N \in \ucomod$,
\begin{equation}
\label{ludovisi}
\gamma(f) := \textstyle\sum_j \gamma_\emme\big( f(e_j)'\langle f(e_j)''_{(-1)} \rightslice e^j, -\rangle \big) \otimes_\ahha f(e_j)''_{(0)}
\end{equation}
defines a right $U$-contraaction on $M \otimes_\ahha N$, where $\gamma_\emme$ is the right $U$-contraaction on $M$, and 
$\{e_j\}_{1 \leq i \leq n}  \in
U, \ \{e^j\}_{1 \leq i \leq n}  \in U^*$ a dual basis. 
To prove this, we either might directly verify the defining  Eqs.~\eqref{passionant}--\eqref{carrefour2}, or show how this contraaction can be obtained from Lemma \ref{sofocle2} described right before the proposition. To this end, consider the adjoint left action on $M \otimes_\ahha N$ over the right Hopf algebroid $U^*$ given by $(\phi, m \otimes_\ahha n) \mapsto \phi^+ m \otimes_\ahha n\phi^-$.
Using then the isomorphism \eqref{vetrorotto2} along with \eqref{mistmistmist}, 
we see that the contraaction on $M \otimes_\ahha N$ is given by
\begin{equation*}
\begin{array}{rcl}
\gamma(f) &=& \textstyle \sum_j e^j \cdot f(e_j)   
=
e^{j+} f(e_j)' \otimes_\ahha f(e_j)'' e^{j-} 
\\
&
{\overset{\scriptscriptstyle{
\eqref{lesbrigittes2}, \eqref{bellitalia}
}}{=}}
& 
\textstyle \sum_j \gamma_\emme\big(f(e_j)'\langle e^{j+} , -\rangle \big) \otimes_\ahha \langle e^{j-}, f(e_j)''_{(-1)} \rangle  f(e_j)''_{(0)} 
\\
&
{\overset{\scriptscriptstyle{
\eqref{passionant}
}}{=}}
&
\textstyle \sum_j \gamma_\emme\big(f(e_j)'\big\langle e^{j+} , \langle e^{j-}, f(e_j)''_{(-1)} \rangle \lact - \big\rangle \big) \otimes_\ahha  f(e_j)''_{(0)} 
\\
&
{\overset{\scriptscriptstyle{
\eqref{duedelue}, \eqref{sondreck} 
}}{=}}
&
\textstyle \sum_j \gamma_\emme\big(f(e_j)'\langle f(e_j)''_{(-1)} \rightslice e^j, - \rangle \big) \otimes_\ahha  f(e_j)''_{(0)}, 
\end{array}
\end{equation*}
which is what was claimed.

Part (ii): we need to show that \eqref{nawas1} holds for $\gamma$ from \eqref{ludovisi} under the condition that \eqref{nawas1} already holds for $\gamma_\emme$, and likewise with respect to stability. On the one hand, using the diagonal left $U$-action on $M \otimes_\ahha N$, one has for $ u \in U$:
\begin{footnotesize}
\begin{equation}
\label{quantodevopagare}
\!\!\!\!\!\!\!\!\!
\begin{array}{rcl}
u\big(\gamma(f)\big) 
&
\!\!\!\!\!\!\!
{\overset{\scriptscriptstyle{
}}{=}}
&
\!\!\!\!\!\! 
\textstyle \sum_j 
u_{(1)} \Big( \gamma_\emme\big(f(e_j)'\langle f(e_j)''_{(-1)} \rightslice e^j, - \rangle \big)\Big) \otimes_\ahha  u_{(2)}f(e_j)''_{(0)}
\\
&
\!\!\!\!\!\!\!
{\overset{\scriptscriptstyle{
\eqref{ludovisi}
}}{=}}
&
\!\!\!\!\!\! 
\textstyle \sum_j 
\gamma_\emme\big(u_{(1)+(2)}\big(f(e_j)'\langle f(e_j)''_{(-1)} \rightslice e^j, u_{(1)-}(-) u_{(1)+(1)} \rangle \big)\big) \otimes_\ahha  u_{(2)}f(e_j)''_{(0)}.
\end{array}
\end{equation}
\end{footnotesize}
On the other hand, one has
\begin{footnotesize}
\begin{equation*}
\begin{array}{rcl}
&
\!\!\!\!\!\!\!\!\!\!\!\!\!\!\!\!\!\!
& 
\!\!\!\!\!\!
\!\!\!\!\!\!
\gamma\big(u_{+(2)} f (u_- (-) u_{+(1)} ) \big) 
\\
&
\!\!\!\!\!\!\!\!\!\!\!\!\!\!\!\!\!\!
{\overset{\scriptscriptstyle{
\eqref{ludovisi}, \eqref{Sch4}
}}{=}}
&
\!\!\!\!\!\!
\textstyle \sum_j
\gamma_\emme\Big(\big(u_{(2)+(1)} f(u_{(2)-} e_j u_{(1)})'\big)\langle \big(u_{(2)+(2)}f(u_{(2)-} e_j u_{(1)})''\big)_{(-1)} \rightslice e^j, - \rangle\Big) \\
&& \qquad \qquad \qquad
\otimes_\ahha \big(u_{(2)+(2)} f(u_{(2)-} e_j u_{(1)})'' \big)_{(0)}
\\
&
\!\!\!\!\!\!\!\!\!\!\!\!\!\!\!\!\!\!
{\overset{\scriptscriptstyle{
\eqref{schizzaestrappa1}, \eqref{trattovideo}
}}{=}}
&
\!\!\!\!\!\!
\textstyle \sum_{j,k}
\gamma_\emme\Big(\big( u_{(2)+(1)} f(e_k)'\big)
\\
&& \qquad \quad
\Big\langle \Big(\big\langle e^{k(1)}, \langle e^{k(2)}, \langle e^{k(3)} , u_{(1)} \rangle \blact e_j  \rangle \blact u_{(2)-} \big\rangle \blact u_{(2)+(2)}f(e_k)''\Big)_{(-1)} 
\\
&& \qquad \qquad \qquad
\rightslice e^j, - \Big\rangle\Big) 
\otimes_\ahha \big(u_{(2)+(2)} f(e_k)'' \big)_{(0)}
\\
&
\!\!\!\!\!\!\!\!\!\!\!\!\!\!\!\!\!\!
{\overset{\scriptscriptstyle{
\eqref{schizzaestrappa1}, \eqref{Sch1}
}}{=}}
&
\!\!\!\!\!\!
\textstyle \sum_{j,k}
\gamma_\emme\Big(\big(u_{(2)+(1)} f(e_k)'\big)
\\
&& \qquad \quad
\Big\langle \Big(\langle e^{k(1)}, u_{(2)-} \rangle \blact u_{(2)+(2)}  
 \ract \langle e^{k(2)}, e_j  \rangle 
f(e_k)''\Big)_{(-1)} 
\\
&& \qquad \qquad \qquad
\rightslice \big(e^j \bract \langle e^{k(3)} , u_{(1)} \rangle\big), - \Big\rangle\Big) 
\otimes_\ahha \big(u_{(2)+(2)} f(e_k)'' \big)_{(0)}
\\
&
\!\!\!\!\!\!\!\!\!\!\!\!\!\!\!\!\!\!
{\overset{\scriptscriptstyle{
\eqref{druento1}, \eqref{Sch9}
}}{=}}
&
\!\!\!\!\!\!
\textstyle \sum_{j,k}
\gamma_\emme\Big(\big(u_{(2)+(1)} f(e_k)'\big)
\\
&& \qquad \quad
\Big\langle \Big(
 u_{(2)+(2)+(1)} f(e_k)''_{(-1)}
\langle e^{k(1)}, u_{(2)-} \rangle \lact u_{(2)+(2)-}  
 \bract \langle e^{k(2)}, e_j  \rangle \Big) 
\\
&& \qquad \qquad \qquad
\rightslice \big(e^j \bract \langle e^{k(3)} , u_{(1)} \rangle\big), - \Big\rangle\Big) 
\otimes_\ahha u_{(2)+(2)} f(e_k)''_{(0)}
\end{array}
\end{equation*}
\end{footnotesize}
\begin{footnotesize}
\begin{equation*}
\begin{array}{rcl}
&
\!\!\!\!\!\!\!\!\!\!\!\!\!\!\!\!\!\!
{\overset{\scriptscriptstyle{
\eqref{Sch4}, \eqref{Sch9}, \eqref{gianduiotto1a}
}}{=}}
&
\!\!\!\!\!\!
\textstyle \sum_{j,k}
\gamma_\emme\Big( \big(u_{(2)+(1)} f(e_k)'\big)
\gve\Big(u_{(2)+(2)+} f(e_k)''_{(-1)+} 
\langle e^{k(1)}, u_{(2)-} \rangle \lact u_{(2)-(2)+} 
\\
&& \qquad
\bract \big(\langle e^{k(2)}, e_j  \rangle 
\big\langle e^j \bract \langle e^{k(3)} , u_{(1)} \rangle, u_{(2)-(2)-} f(e_k)''_{(-1)-} u_{(2)+(2)-} (-)\big\rangle\big)\Big)\Big) 
\\
&& \qquad \qquad \qquad
\otimes_\ahha u_{(2)+(3)} f(e_k)''_{(0)}
\\
&
\!\!\!\!\!\!\!\!\!\!\!\!\!\!\!\!\!\!
{\overset{\scriptscriptstyle{
\eqref{Sch4}, \eqref{duedelue}, \eqref{trattovideo}, \eqref{schizzaestrappa2}
}}{=}}
&
\!\!\!\!\!\!
\textstyle \sum_{k}
\gamma_\emme\Big( \big(u_{+(2)} f(e_k)'\big)
\gve\Big(u_{+(3)+} f(e_k)''_{(-1)+} 
\langle e^{k(1)}, u_{-+(1)} \rangle \lact u_{-+(2)} 
\\
&& \qquad \qquad
\bract \langle e^{k(2)}, u_{--} f(e_k)''_{(-1)-} u_{+(3)-} (-) u_{+(1)}\rangle\Big)\Big) 
\otimes_\ahha u_{+(4)} f(e_k)''_{(0)}
\\
&
\!\!\!\!\!\!\!\!\!\!\!\!\!\!\!\!\!\!
{\overset{\scriptscriptstyle{
\eqref{tellmemore}, \eqref{alsodoch}, \eqref{trattovideo}
}}{=}}
&
\!\!\!\!\!\!
\textstyle \sum_{k}
\gamma_\emme\Big( \big(u_{+(2)} f(e_k)'\big)
\gve\Big(u_{+(3)+} f(e_k)''_{(-1)+} 
\\
&& \qquad \qquad
\bract
\big\langle e^{k}, u_{-+} u_{--} f(e_k)''_{(-1)-} u_{+(3)-} (-) u_{+(1)}\big\rangle\Big)\Big) 
\otimes_\ahha u_{+(4)} f(e_k)''_{(0)}
\\
&
\!\!\!\!\!\!\!\!\!\!\!\!\!\!\!\!\!\!
{\overset{\scriptscriptstyle{
\eqref{Sch7}, \eqref{Takeuchicoaction}, \eqref{pergolesi}, \eqref{Sch8}
}}{=}}
&
\!\!\!\!\!\!
\textstyle \sum_{k}
\gamma_\emme\Big( \big(u_{(2)} f(e_k)'\big)
\gve\Big(u_{(3)+} f(e_k)''_{(-1)+} 
\\
&& \qquad \qquad
\bract
\big\langle e^{k}, f(e_k)''_{(-1)-} u_{(3)-} (-) u_{(1)}\big\rangle\Big)\Big) 
\otimes_\ahha u_{(4)} f(e_k)''_{(0)}
\\
&
\!\!\!\!\!\!\!\!\!\!\!\!\!\!\!\!\!\!
{\overset{\scriptscriptstyle{
\eqref{alsodoch}, \eqref{Sch9}, \eqref{tellmemore}, \eqref{Sch4}
}}{=}}
&
\!\!\!\!\!\!
\textstyle \sum_{k}
\gamma_\emme\Big(\Big(\Big( \gve\Big( f(e_k)''_{(-1)+} \bract
\big\langle e^{k}, f(e_k)''_{(-1)-} u_{(1)-} (-) u_{(1)+(1)}\big\rangle\Big) 
\\
&& \qquad \qquad
\blact u_{(1)+(2)}\Big) f(e_k)'\Big) \gve(u_{(1)+(3)}) \Big)
\otimes_\ahha u_{(2)} f(e_k)''_{(0)}
\\
&
\!\!\!\!\!\!\!\!\!\!\!\!\!\!\!\!\!\!
{\overset{\scriptscriptstyle{
\eqref{pergolesi}, \eqref{gianduiotto1a}
}}{=}}
&
\!\!\!\!\!\!
\textstyle \sum_k 
\gamma_\emme\big(u_{(1)+(2)}\big(f(e_k)'\langle f(e_k)''_{(-1)} \rightslice e^k, u_{(1)-}(-) u_{(1)+(1)} \rangle \big)\big) \otimes_\ahha  u_{(2)}f(e_k)''_{(0)}
\\
&
\!\!\!\!\!\!\!\!\!\!\!\!\!\!\!\!\!\!
{\overset{\scriptscriptstyle{
\eqref{quantodevopagare}
}}{=}}
&
\!\!\!\!\!\!
u\big(\gamma(f)\big), 
\end{array}
\end{equation*}
\end{footnotesize}
that is, we showed \eqref{nawas1} for the contraaction \eqref{ludovisi}.
\end{proof}

\begin{rem}
In particular, if $M = A$, that is, if the base algebra itself is an aYD contramodule over $U$ and $N \in \yd$, then 
\begin{equation}
\label{ludovisi2}
\gamma(f) := \textstyle\sum_j \gamma_\ahha\big(\langle f(e_j)_{(-1)} \rightslice e^j, -\rangle \big)f(e_j)_{(0)}.
\end{equation}
defines a right $U$-contraaction on $N$ which turns it into an aYD contramodule.
\end{rem}

\vspace*{.1cm}
\begin{center}
* \quad * \quad *
\end{center}


\subsection{The cocyclic module}
We are now in a position to define a set of operators $(\gd_i, \gs_j, \tau)$ that will turn out to define a cocyclic module structure on the $k$-modules $C^\bull(U,M)$ from \eqref{baumbluetenfest}. To this end, set
\begin{equation}
\hspace*{-.2cm} \begin{array}{rll}
\label{anightinpyongyang1}
(\gd_i f)(u^1, \ldots, u^{n+1}) \!\!\!\!&= \left\{\!\!\!
\begin{array}{l} 
 f(u^1, \ldots, \gve(u^{n+1}) \blact u^{n})
\\ 
 f(u^1, \ldots, u^{n-i+1} u^{n-i+2}, \ldots, u^{n+1})
\\
u^1 f(u^2, \ldots, u^{n+1})
\end{array}\right.  
& \!\! \hspace*{-.5cm}  \begin{array}{l} \mbox{if} \ i=0, \\ \mbox{if} \
  1 \leq i \leq n, \\ \mbox{if} \ i = n + 1,  \end{array} 
\\
%
\
\\
(\gs_j f)(u^1, \ldots, u^{n-1}) \!\!\!\! 
&= f(u^1, \ldots, u^{n-j}, 1, u^{n-j+1}, \ldots, u^n) & \! \,
\hspace*{1pt} \hspace*{-.5cm}  0 \leq j \leq n-1,
\\
\
\\
(\tau f)(u^1, \ldots, u^n) \!\!\!\! 
&= \gamma\big(
u^1_+ f(u^2_+, \ldots, u^n_+, u^n_- \cdots u^1_-(-))\big) & 
\end{array}
\end{equation}
on $C^n(U,M)$. Zero cochains are identified with elements of $M$ by means of $\Hom_\Aopp(\Aop, M) \simeq M$, and the corresponding cofaces read 
$$
(\gd_i m)(u) = \left\{ \!\!\!
\begin{array}{ll}
m\gve(u) & \mbox{if} \ i = 0, \\
um & \mbox{if} \ i = 1.
\end{array}
\right.
$$

As always, set $\gb := \sum^{n+1}_{i=0} (-1)^i \gd_i$. It is a straightforward check that $\big(C^\bull(U,M), \gd_\bull, \gs_\bull\big)$ defines a cosimplicial $k$-module, and only needs the left $U$-bialgebroid structure of $U$ along with the left $U$-module structure of $M$. We can therefore make the following definition:

\begin{lemdfn} 
The simplicial cohomology computed by $(C^\bull(U,M), \gb)$ for a left bialgebroid $(U,A)$ and a left $U$-module $M$ is denoted by $H^\bull(U,M)$ and called the {\em Hochschild cohomology} of $U$ with values in $M$; if $U_\ract$ is flat as an $A$-module,
then $H^\bull(U,M) \simeq \Ext^\bull_U(A,M)$, see \cite{KowKra:BVSOEAT}.
\end{lemdfn}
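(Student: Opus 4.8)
Since the preceding discussion already records that $(C^\bull(U,M),\gd_\bull,\gs_\bull)$ is a cosimplicial $k$-module, the simplicial coboundary $\gb=\sum_{i=0}^{n+1}(-1)^i\gd_i$ automatically squares to zero, so $(C^\bull(U,M),\gb)$ is a cochain complex and the cohomology $H^\bull(U,M)$ is well defined; this requires nothing beyond the left bialgebroid structure of $U$ and the left $U$-action on $M$, and needs no separate argument. The definitional part of the statement, including the terminology ``Hochschild cohomology'', is then justified by the example $(U,A)=(\Ae,A)$ treated in \S\ref{scottex}, which reproduces the classical Hochschild complex. It therefore remains only to establish the isomorphism $H^\bull(U,M)\simeq\Ext^\bull_U(A,M)$ under the flatness hypothesis.

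The plan for the comparison is to realise $C^\bull(U,M)$ as $\Hom_U(\mathrm{Bar}_\bull,M)$ for the bar resolution of $A$ in $\umod$. I would set
$$
\mathrm{Bar}_n := U \otimes_\Aopp U^{\otimes_\Aopp n},
$$
viewed as a left $U$-module via left multiplication on the outer tensorand, equipped with the standard bar differential built from the multiplication of $U$ on adjacent factors and from $\gve$ in the extremal faces, and augmented onto $A$ through $\gve\colon U\to A$. First I would verify that $s(u^0\otimes_\Aopp\cdots\otimes_\Aopp u^n):=1\otimes_\Aopp u^0\otimes_\Aopp\cdots\otimes_\Aopp u^n$ is a right $A$-linear contracting homotopy; this exhibits $\mathrm{Bar}_\bull\to A\to 0$ as split exact over $A$, hence exact over $k$, so that it is a genuine resolution of $A$ in $\umod$ by the relatively projective (induced) modules $U\otimes_\Aopp(-)$.

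Next I would apply the induction--restriction adjunction $\Hom_U(U\otimes_\Aopp X,M)\cong\Hom_\Aopp(X,M)$, natural in the right $A$-module $X$, with $X=U^{\otimes_\Aopp n}$. This identifies $\Hom_U(\mathrm{Bar}_n,M)$ with $C^n(U,M)=\Hom_\Aopp(U^{\otimes_\Aopp n},M)$, and a direct inspection shows that the transpose of the bar differential is exactly the operator $\gb$ read off from \eqref{anightinpyongyang1} (the inner faces transposing to the multiplication faces $\gd_i$, the extremal faces to $\gd_0$ and $\gd_{n+1}$). Hence $\Hom_U(\mathrm{Bar}_\bull,M)\cong(C^\bull(U,M),\gb)$ as cochain complexes.

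The final and most delicate step is the passage from the relatively projective bar resolution to the absolute derived functor: one must show that, once $U_\ract$ is flat as an $A$-module, the $A$-split resolution $\mathrm{Bar}_\bull$ may be used to compute $\Ext^\bull_U(A,M)$ and not merely a relative $\Ext$. This comparison between relative and absolute $\Ext$, together with the precise role played by flatness of the particular $A$-action $U_\ract$, is carried out in \cite{KowKra:BVSOEAT}, which I would invoke directly; it is exactly this relative-to-absolute passage, rather than the bar-complex bookkeeping indicated above, that I expect to be the main obstacle.
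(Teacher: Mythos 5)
Your route is, in substance, the paper's own: the paper offers no argument for this Lemma-and-Definition beyond the observation (made just before the statement) that $(C^\bull(U,M),\gd_\bull,\gs_\bull)$ is cosimplicial, plus the citation to \cite{KowKra:BVSOEAT}; and the mechanism you reconstruct --- the $A$-split bar complex ${\rm Bar}_n(U)=U\otimes_\Aopp U^{\otimes_\Aopp n}$ with $U$ acting by multiplication on the first factor, the induction--restriction adjunction $\Hom_\uhhu(U\otimes_\Aopp X,M)\simeq\Hom_\Aopp(X,M)$, and the identification of the transposed bar differential with $\gb$ (up to the reversed labelling of the cofaces in \eqref{anightinpyongyang1}, which only rescales $\gb$ by the global sign $(-1)^{n+1}$ and does not affect cohomology) --- is exactly what that reference does, and is also what this paper itself sketches in \S\ref{t'n'c}, where $C^\bull(U,M)$ is recomputed as $\Hom_\uhhu({\rm Bar}_\bull(U),M)$. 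Up to this point your proposal and the paper agree.

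The genuine gap sits in your final paragraph, and not quite where you locate it. The passage from the bar complex to $\Ext^\bull_U(A,M)$ is not a relative-to-absolute comparison powered by flatness: the cited reference --- like every other $\Ext$-statement in this paper (Theorem \ref{B} and Corollary \ref{pourquoilamuit}: ``if $U_\ract$ is projective''; Theorem \ref{B1} and Corollary \ref{stehtdranne}: ``if $A$ is $k$-projective'') --- assumes that $U_\ract$ is \emph{projective} over $A$. Projectivity is precisely what closes the argument: induction $U\otimes_\Aopp(-)$ preserves projectives, so each ${\rm Bar}_n(U)$ is then a projective left $U$-module, the $A$-split (hence exact) bar complex is literally a projective resolution of $A$ in $\umod$, and it computes absolute $\Ext$ outright, with no relative homological algebra needed. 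If instead one takes the word ``flat'' in the statement at face value, the bar terms are merely $U$-flat, and a flat resolution computes $\Tor$, not $\Ext$; there is no general device that upgrades it, and no finiteness hypothesis is available here to convert flat into projective. (Flatness is the natural hypothesis for the dual statement $H^\bull_\co(V,M)\simeq\Cotor^\bull_V(M,B)$ appearing earlier in the same section, and has presumably migrated into this statement by analogy.) So the step you defer to \cite{KowKra:BVSOEAT} would fail as you describe it: to complete the proof you should replace ``flat'' by ``projective'' --- which is what the reference actually proves and what the rest of the paper consistently assumes --- or else supply an argument for the flat case, which neither you nor the paper provides.
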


To prove that $(C^\bull(U,M), \gd_\bull, \gs_\bull, \tau)$ also defines a cocyclic $k$-module, we will pass through the dual:  
a higher degree version of \eqref{vetrorotto1} gives a map between the complexes we are interested in. More precisely, define
\begin{equation}
\label{mondrian1}
\begin{split}
\xi:  C^n_\co( & U^*, M)  \to C^n(U,M), \quad m \otimes_\ahha \phi_1 \otimes_\ahha \cdots \otimes_\ahha \phi_n \mapsto 
\\
& \big\{ u^1 \otimes_\Aopp \cdots \otimes_\Aopp u^n \mapsto m\langle \phi_1, \langle \phi_2 , \langle \ldots \langle \phi_n , u^n \rangle \blact \ldots \rangle \blact u^2 \rangle \blact u^1 \rangle \big\},
\end{split}
\end{equation}
which, as before, is an isomorphism if $U_\ract$ is finitely generated $A$-projective in which case the inverse reads
\begin{equation}
\label{mondrian2}
\begin{split}
\xi^{-1}: C^n(U,M) &\to C^n_\co(U^*,M), \\
 f &\mapsto 
\sum_{i_1, \ldots, i_n} f(e_{i_1}, \ldots, e_{i_n}) \otimes_\ahha e^{i_1} \otimes_\ahha \cdots \otimes_\ahha e^{i_n}.
\end{split}
\end{equation}

\begin{prop}
\label{zitty}
Let $(U,A)$ be a left Hopf algebroid, let $U_\ract$ be finitely generated $A$-projective, and $M$ a left $U^*$-module right $U^*$-comodule with compatible left $A$-actions. Then $M$ can be seen as a right $U$-contramodule and left $U$-module with compatible left $A$-actions and the operators \eqref{anightinpyongyang1} can be obtained as
\begin{equation*}
\begin{array}{rcl}
\label{figc}
\gd_i &=& \xi \circ  \gd'_i \circ \xi^{-1}, \\
\gs_j &=& \xi \circ \gs'_j \circ \xi^{-1}, \\
\tau &=& \xi \circ \tau' \circ \xi^{-1},
\end{array}
\end{equation*}
where $\xi$ is the isomorphism from
\eqref{mondrian1} and $\delta_i', \sigma'_j, \tau'$ 
are the para-cocyclic operators on
$C^\bull_\co(U^*, M)$ for the right bialgebroid $(U^*,A)$ and the left module right comodule $M$ as in \eqref{anightinpyongyang2}.
\end{prop}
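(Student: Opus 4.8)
Since $U_\ract$ is finitely generated $A$-projective, Lemma \ref{sofocle2}\,\eqref{uhu2} turns the right $U^*$-comodule $M$ into a left $U$-module through \eqref{vetrorotto3}, and Lemma \ref{sofocle2}\,\eqref{uhu4} turns the left $U^*$-module $M$ into a right $U$-contramodule through \eqref{mistmistmist}; the two induced $A$-bimodule structures agree because both descend from the module structure over the base $A$ of $U^*$, which gives the first assertion. The remaining content is the three conjugation identities, and the plan is to prove each of them by a direct pointwise computation: since $\xi$ of \eqref{mondrian1} is an isomorphism with explicit inverse \eqref{mondrian2}, it suffices to feed $\xi^{-1}(f)$ into the respective primed operator of \eqref{anightinpyongyang2}, push the result back through $\xi$, and compare with \eqref{anightinpyongyang1}. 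As a by-product this is exactly what transports the para-cocyclic, resp.\ cocyclic, structure of Lemma \ref{sofocle1} — available because $U^*$ is a right Hopf algebroid by Theorem \ref{Hopfdual} — onto $C^\bull(U,M)$, which is the use made of the proposition in the sequel.

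For the cosimplicial operators $\gd_i$ and $\gs_j$ I would first dispatch the inner faces $1 \leq i \leq n$ and the degeneracies, which involve only the right coproduct \eqref{weiszjanich} resp.\ the right counit $\pl = \langle \cdot, 1_\uhhu \rangle$ of $U^*$. Evaluating $\xi \circ \gd'_i \circ \xi^{-1}$ resp.\ $\xi \circ \gs'_j \circ \xi^{-1}$ on $u^1 \otimes_\Aopp \cdots \otimes_\Aopp u^{n+1}$, the coproduct relation \eqref{trattovideo}, the pairing identities \eqref{duedelue}, and the dual-basis decomposition \eqref{schizzaestrappa1} collapse the two adjacent tensor legs into a single pairing against the product $u^{n-i+1}u^{n-i+2}$, so that the dual comultiplication of $U^*$ becomes the multiplication of $U$; the counit correspondingly yields the insertion of $1$ defining $\gs_j$. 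The extreme face $\gd'_0$, which appends the algebra unit $\gve = 1_{U^*}$, dualises to the $\gve$-twisted slot $\gve(u^{n+1}) \blact u^n$, while $\gd'_{n+1}$, built from the right $U^*$-coaction of $M$, becomes the action $u^1 f(u^2, \ldots, u^{n+1})$ once the coaction is rewritten as the left $U$-action by \eqref{vetrorotto3}. None of these uses the Hopf structure.

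The cyclic operator $\tau = \xi \circ \tau' \circ \xi^{-1}$ is the main obstacle, since all three pieces of extra data enter simultaneously. Applying $\tau'$ of \eqref{anightinpyongyang2} to $\xi^{-1}(f) = \sum f(e_{i_1}, \ldots, e_{i_n}) \otimes_\ahha e^{i_1} \otimes_\ahha \cdots \otimes_\ahha e^{i_n}$ introduces, on the last leg $\phi = e^{i_n}$, the right-Hopf-algebroid translation $\phi^- \otimes_\Aopp \phi^+$ of $U^*$, its iterated coproduct $\phi^{-(1)} \otimes_\ahha \cdots \otimes_\ahha \phi^{-(n)}$, and the left $U^*$-action of $\phi^+$ on the coefficient. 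The crux is that, by Theorem \ref{Hopfdual} and \eqref{viviverde2}, this $U^*$-translation is assembled from the transposed action \eqref{gianduiotto1a} of $U$ on $U^*$, so pairing the legs against $u^1, \ldots, u^n$ through $\xi$ and collapsing the dual-basis sums by repeated use of \eqref{schizzaestrappa1} converts it into the honest translation map $u \mapsto u_+ \otimes_\Aopp u_-$ on $U$; the bridge relation \eqref{sondreck}, rewriting $\phi^+ \ract \langle \phi^-, u\rangle$ as $u \rightslice \phi$, is what makes this identification close up. The anti-multiplicativity \eqref{Sch6}, driven by the iterated coproduct of $\phi^-$, is then what assembles the reversed product $u^n_- \cdots u^1_-$ in the last argument, while the corresponding $\phi^+$-legs distribute as the $u^j_+$; finally the surviving left $U^*$-action on the coefficient is turned into the contraaction $\gamma$ via \eqref{bellitalia} and \eqref{mistmistmist}, and the residual right $U^*$-coaction into the left $U$-action via \eqref{vetrorotto3}, producing exactly $\gamma\big(u^1_+ f(u^2_+, \ldots, u^n_+, u^n_- \cdots u^1_-(-))\big)$. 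I expect the real difficulty to be purely combinatorial: keeping the iterated-coproduct indices, the Sweedler legs of the $U^*$-translation, and the reordering forced by \eqref{Sch6} synchronised throughout — a bookkeeping exercise morally parallel to, though lighter than, the aYD-contramodule computation in the proof of Proposition \ref{jetztnkaffee}.

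Finally I would note that the finiteness of $U_\ract$ is used only to make $\xi$ invertible and to guarantee the right Hopf structure of Theorem \ref{Hopfdual}; the operators \eqref{anightinpyongyang1} are manifestly well defined without it, which is precisely what permits the Hochschild case $(U,A) = (\Ae, A)$ — where $\Ae$ is typically not finite over $A$ — to be treated later on.
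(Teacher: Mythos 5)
Your proposal is correct and follows essentially the same route as the paper: convert the $U^*$-module/comodule data into a $U$-contramodule/module via Lemma \ref{sofocle2}, then verify the three conjugation identities by direct pointwise computation, with the cyclic operator handled by expanding $\tau'$ on $\xi^{-1}(f)$, converting the $U^*$-translation map into the translation map of $U$ through the explicit formula of Theorem \ref{Hopfdual} (your bridge \eqref{sondreck} is just the packaged form of \eqref{viviverde2} plus \eqref{gianduiotto1a} that the paper uses directly), collapsing dual-basis sums via \eqref{schizzaestrappa1}, letting \eqref{Sch6} produce the reversed product $u^n_-\cdots u^1_-$, and translating the residual $U^*$-action and coaction on the coefficient into $\gamma$ and the left $U$-action via \eqref{bellitalia} and \eqref{vetrorotto3}. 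The only presentational difference is that the paper carries out the full computation solely for $\tau$ in degree $n=2$ and leaves the cofaces, codegeneracies, and general $n$ to the reader, whereas you sketch all operators in general degree at a coarser level of detail.
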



\begin{proof}
%
We will only prove this for the most difficult case, that is, for the cyclic operator, and leave the respective computations for cofaces and codegeneracies to the reader; however, we even restrict to the case $n=2$ for reasons of space and to avoid too messy expressions (you will soon understand) as for example appear in \eqref{mondrian1}, the case for general $n$ then being obvious (no induction needed). 

So, let $f \in C^\bull(U,M)$ and assume for a minute that $U_\ract$ is finitely generated $A$-projective with dual basis $\{e_i\}_{1 \leq i \leq n}  \in
U, \ \{e^i\}_{1 \leq i \leq n}  \in U^*$, and let $\cdot$ denote the left $U^*$-action on $M$. One computes for $u, v \in U$
\begin{small}
\begin{equation}
\begin{array}{cl}
\label{nochnichtfertig}
& (\xi  \circ \tau' \circ \xi^{-1})(u,v) 
\\
{\overset{\scriptscriptstyle{
\eqref{anightinpyongyang2}, \eqref{mondrian1}, \eqref{mondrian2}
}}{=}}
& 
\textstyle\sum_{i,j} \big (e^{j+} \cdot f(e_i, e_j)^{(0)} \big) \big\langle f(e_i, e_j)^{(1)} e^{j-(1)} , \langle e^i  e^{j-(2)} , v \rangle \blact u \big\rangle
\\ 
{\overset{\scriptscriptstyle{
\eqref{LDMon}, \eqref{duedelue}
}}{=}}
& 
\textstyle\sum_{i,j}
\Big( e^{j+} \ract \Big\langle  e^{j-(1)} \ract \langle  f(e_i, e_j)^{(1)}, u_{(1)} \rangle, \langle e^i  e^{j-(2)} , v \rangle \blact u_{(2)} \Big\rangle
\Big)\cdot f(e_i, e_j)^{(0)}
\\ 
{\overset{\scriptscriptstyle{
\eqref{Rch1}, \eqref{Rch5}
}}{=}}
& 
\textstyle\sum_{i,j}
\Big(\langle  f(e_i, e_j)^{(1)}, u_{(1)} \rangle \blact  e^{j+} \ract \big\langle  e^{j-(1)} , \langle e^i  e^{j-(2)} , v \rangle \blact u_{(2)} \big\rangle
\Big)\cdot f(e_i, e_j)^{(0)}
\\
{\overset{\scriptscriptstyle{
\eqref{pergolesi}
}}{=}} 
&
\textstyle\sum_{i,j}
\big(e^{j+} \ract \big\langle  e^{j-(1)} , \langle e^i  e^{j-(2)} , v \rangle \blact u_{(2)} \big\rangle
\big)\cdot \big( f(e_i, e_j)^{(0)} \langle  f(e_i, e_j)^{(1)}, u_{(1)} \rangle \big)
\\
{\overset{\scriptscriptstyle{
\eqref{vetrorotto3}
}}{=}} 
&
\textstyle\sum_{i,j}
\big(e^{j+} \ract \big\langle  e^{j-(1)} , \langle e^i  e^{j-(2)} , v \rangle \blact u_{(2)} \big\rangle
\big)\cdot \big( u_{(1)} f(e_i, e_j)\big),
\end{array}
\end{equation}
\end{small}
that is, a certain element in $U^*$ acting on an element in $M$, and we proceed by simplifying this element in $U^*$: for $u, v, w \in U$, one has
\begin{equation*}
\begin{array}{rcl}
&&
\big\langle
e^{j+} \ract \big\langle  e^{j-(1)} , \langle e^i  e^{j-(2)} , v \rangle \blact u \big\rangle, 
w
\big\rangle
\\
&
{\overset{\scriptscriptstyle{
\eqref{duedelue}
}}{=}} 
&
\big\langle
e^{j+}  , \big\langle  e^{j-(1)} , \langle e^i  e^{j-(2)} , v \rangle \blact u \big\rangle \lact w
\big\rangle 
\\
&
{\overset{\scriptscriptstyle{
\eqref{viviverde2}
}}{=}}
&
\textstyle\sum_{k}
\big\langle
e_k \rightslice e^{j}  , \big\langle  e^{k(1)} , \langle e^i  e^{k(2)} , v \rangle \blact u \big\rangle \lact w
\big\rangle 
\\
&
{\overset{\scriptscriptstyle{
\eqref{gianduiotto1a}
}}{=}}
&
\textstyle\sum_{k}
\gve\Big(  
\Big\langle   
e^j , e_{k-} \big(\langle e^{k(1)} , \langle  e^i e^{k(2)}  , v \rangle \blact u \rangle \lact w \big) 
\Big\rangle
\blact e_{k+}
\Big)
\\
&
{\overset{\scriptscriptstyle{
\eqref{LDMon}
}}{=}}
&
\textstyle\sum_{k}
\gve\Big(  
\Big\langle   
e^j , e_{k-} \big(\big\langle e^{k(1)} , \langle  e^{k(2)}  , \langle e^i, v_{(1)} \rangle \lact v_{(2)} \rangle \blact u \big\rangle \lact w \big) 
\Big\rangle
\blact e_{k+}
\Big)
\\
&
{\overset{\scriptscriptstyle{
\eqref{trattovideo}
}}{=}}
&
\textstyle\sum_{k}
\gve\Big(  
\Big\langle   
e^j , \big(e_{k-} \bract \big\langle e^{k} , u (\langle  e^i  , v_{(1)} \rangle \lact v_{(2)}) \big\rangle\big) w
\Big\rangle
\blact e_{k+}
\Big)
\\
&
{\overset{\scriptscriptstyle{
\eqref{Sch6}, \eqref{Sch9}, \eqref{schizzaestrappa1}
}}{=}}
&
\gve\Big( 
\big\langle e^j, \big( u (\langle  e^i  , v_{(1)} \rangle \lact v_{(2)}) \big)_- w \big\rangle \blact
\big(  u (\langle  e^i  , v_{(1)} \rangle \lact v_{(2)}) \big)_+ \Big)  
\\
&
{\overset{\scriptscriptstyle{
\eqref{Sch6}, \eqref{Sch9}
}}{=}}
&
\gve\Big( 
\langle e^j, v_{(2)-} u_- w \rangle \blact
\big(u_+ (\langle  e^i  , v_{(1)} \rangle \lact v_{(2)+}) \big)\Big)  
\\
&
{\overset{\scriptscriptstyle{
\eqref{gianduiotto1a}
}}{=}}
&
\Big( 
\big(u (\langle  e^i  , v_{(1)} \rangle \lact v_{(2)})  \big) \rightslice e^j
\Big)(w).
\end{array}
\end{equation*}
Hence, we can resume our computation in \eqref{nochnichtfertig} and continue by
\begin{equation}
\label{schnellzubett}
\begin{array}{rcl}
&&
\hspace*{-3cm} 
(\xi  \circ \tau' \circ \xi^{-1})(u,v) 
= 
\textstyle\sum_{i,j}
\Big( 
\big(u_{(2)} (\langle  e^i  , v_{(1)} \rangle \lact v_{(2)})  \big) \rightslice e^j
\Big) \cdot  \big( u_{(1)} f(e_i, e_j)\big)
\\
&
{\overset{\scriptscriptstyle{
\eqref{tellmemore}, \eqref{pergolesi}, \eqref{schizzaestrappa1}
}}{=}}
&
\textstyle\sum_{j}
\big((u_{(2)}v_{(2)}) \rightslice e^j
\big) \cdot  \big( u_{(1)} f(v_{(1)}, e_j)\big)
\\
&
{\overset{\scriptscriptstyle{
\eqref{bellitalia}
}}{=}}
&
\textstyle\sum_{j}
\gamma \Big(\big( u_{(1)} f(v_{(1)}, e_j)\big) \langle (u_{(2)}v_{(2)}) \rightslice e^j , - \rangle \Big),
\end{array}
\end{equation}
where $\gamma: \Hom_\Aopp(U,M) \to M$ is the right $U$-contramodule structure on $M$ that corresponds to the left $U^*$-action as in Lemma \ref{sofocle2}, Eq.~\eqref{bellitalia}. We proceed by simplifying the element in $\Hom_\Aopp(U,M)$ given by $w \mapsto \big( u_{(1)} f(v_{(1)}, e_j)\big) \big\langle (u_{(2)}v_{(2)}) \rightslice e^j , w \rangle$. More precisely, 
\begin{eqnarray*}
&&
\textstyle\sum_j 
 \big(  u_{(1)} f(v_{(1)}, e_j)\big) \langle (u_{(2)}v_{(2)}) \rightslice e^j , w \rangle 
\\
&
 {\overset{\scriptscriptstyle{
 \eqref{Sch3}
 }}{=}}
&
\textstyle\sum_j 
\big(u_{(1)+}f(u_{(1)-} u_{(2)} v_{(1)}, e_j) \big) \langle (u_{(3)}v_{(2)}) \rightslice e^j , w \rangle 
\\
&
 {\overset{\scriptscriptstyle{
\eqref{gianduiotto1}
 }}{=}}
&
\textstyle\sum_j 
(u_{(1)} \rightslice f)(u_{(2)} v_{(1)} \ract  \langle (u_{(3)}v_{(2)}) \rightslice e^j , w \rangle, e_j) 
\\
&
 {\overset{\scriptscriptstyle{
\eqref{gianduiotto1a}
 }}{=}}
&
\textstyle\sum_j 
\big(u_{(1)} \rightslice f\big)\Big(u_{(2)} v_{(1)} \ract  
\gve\big(\langle e^j , v_{(2)-}  u_{(3)-} w \rangle \blact u_{(3)+}v_{(2)+} \big), e_j \Big)
\\
&
 {\overset{\scriptscriptstyle{
\eqref{Sch4}
 }}{=}}
&
\textstyle\sum_j 
\big(u_{+(1)} \rightslice f\big)\Big(u_{+(2)} v_{+(1)} \ract  
\gve\big(  u_{+(3)}v_{+(2)} \bract \langle e^j , v_{-}  u_{-} w \rangle \big), e_j \Big)
\\
&
 {\overset{\scriptscriptstyle{
\eqref{tellmemore}
 }}{=}}
&
(u_{+(1)} \rightslice f)\big(u_{+(2)} v_{+(1)} \ract  
\gve(u_{+(3)}v_{+(2)}) , v_{-}  u_{-} w \big)
\\
&
 {\overset{\scriptscriptstyle{
\eqref{gianduiotto1}
 }}{=}}
&
u_+ f(v_+, v_-  u_- w),
\end{eqnarray*}
where in the fourth and in the last step we used the properties \eqref{alsodoch} of a left bialgebroid counit.
Putting this now back into \eqref{schnellzubett}, we obtain
$$
(\xi  \circ \tau' \circ \xi^{-1})(u,v) =
\gamma\big(u_+ f(v_+, v_-  u_- (-))\big),
$$
and this proves that for $\tau$ (and likewise $\gd_i$ and $\gs_j$) in \eqref{anightinpyongyang1} the identity $\tau = \xi  \circ \tau' \circ \xi^{-1}$ holds under the given assumptions.
\end{proof}

\begin{cor}
\label{chelabertaschen2}
If $U$ is a left Hopf algebroid and 
$M$ is a left $U$-module right $U$-contramodule with compatible left $A$-actions, 
then $C^\bull(U,M)$ 
with the operators 
$(\gd_\bull,
 \gs_\bull, \tau)$ from \eqref{anightinpyongyang1} forms a para-cocyclic 
$k$-module, which is cocyclic if $M$ is a stable aYD contramodule.
\end{cor}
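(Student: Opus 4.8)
The plan is to prove the para-cocyclic (resp.\ cocyclic) structure first under the hypothesis that $U_\ract$ is finitely generated $A$-projective, by transport of structure along the dual, and then to remove this hypothesis by a direct, finiteness-free verification of the defining relations. For the finite case I would argue as follows. By Lemma \ref{sofocle2}, parts \eqref{uhu2} and \eqref{uhu4}, the datum of $M$ as a left $U$-module and right $U$-contramodule with compatible left $A$-actions is the same as that of a left $U^*$-module and right $U^*$-comodule over the right bialgebroid $U^*$, and by part \eqref{uhu5} a stable aYD contramodule over $U$ corresponds to a stable aYD module over $U^*$. Since $U^*$ is a right Hopf algebroid by Theorem \ref{Hopfdual}, Lemma \ref{sofocle1} shows that $(C^\bull_\co(U^*,M), \gd'_\bull, \gs'_\bull, \tau')$ is para-cocyclic, and cocyclic when $M$ is stable aYD. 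Proposition \ref{zitty} then provides the $k$-linear isomorphism $\xi$ of \eqref{mondrian1} with $\gd_i = \xi \gd'_i \xi^{-1}$, $\gs_j = \xi \gs'_j \xi^{-1}$ and $\tau = \xi \tau' \xi^{-1}$, so the relations transport verbatim and $(C^\bull(U,M), \gd_\bull, \gs_\bull, \tau)$ is para-cocyclic, resp.\ cocyclic.

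The substance of the corollary lies in removing the finiteness hypothesis, which is indispensable for the intended application to $(\Ae,A)$, where $\Ae$ is typically not finitely generated over $A$. The operators \eqref{anightinpyongyang1} are given by closed formulas in the structure maps that mention no dual basis, hence are defined for any left Hopf algebroid $U$ and any compatible pair consisting of a left $U$-module and right $U$-contramodule $M$. I would therefore verify the para-cocyclic relations directly: evaluating both sides of each relation on an arbitrary tensor $u^1 \otimes_\Aopp \cdots \otimes_\Aopp u^n$ reduces it to an identity in $M$ that is then manipulated using only the translation-map identities \eqref{Sch1}--\eqref{Sch9}, the bialgebroid axioms, and the contramodule axioms \eqref{carrefour1}--\eqref{carrefour2}; none of these ingredients refers to finiteness, so the same computations establish the relations in general, the finite case above serving as a consistency check and as the origin of the formulas. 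The cosimplicial part $(\gd_\bull, \gs_\bull)$ is the straightforward check that uses only the left bialgebroid structure of $U$ and the left $U$-module structure of $M$, as already observed. One preliminary point to dispatch is the well-definedness of $\tau$, namely that $u^1_+ f(u^2_+, \ldots, u^n_+, u^n_- \cdots u^1_-(-))$ depends only on the classes in the relevant $\otimes_\Aopp$-factors and lands in $\Hom_\Aopp(U^{\otimes_\Aopp n}, M)$; this is checked exactly as in the well-definedness discussion for \eqref{nawas1} after Definition \ref{chelabertaschen1}, using \eqref{Sch6}, \eqref{Sch9} and the $A$-bilinearity of $\gamma$ from \eqref{carrefour4}.

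The main obstacle is the final cyclic identity $\tau^{n+1} = \id$ in the stable case. By Lemma \ref{sofocle1} this is the single relation lying beyond the para-cocyclic ones, and it is the unique point at which the full strength of the stable aYD contramodule structure — both the compatibility \eqref{nawas1} and the stability \eqref{stablehalt} — is needed. I expect to prove it by iterating the formula for $\tau$ in \eqref{anightinpyongyang1}: the nested translation maps generated by the iteration collapse through \eqref{Sch2}, \eqref{Sch3} and \eqref{Sch6}, the aYD compatibility \eqref{nawas1} is then used to carry the accumulated left $U$-action through the contraaction $\gamma$, and a concluding application of stability in the form $\gamma((-)m) = m$ of \eqref{stablehalt} returns the original cochain. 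Since every other relation already holds at the para-cocyclic level, this accounts for the two-tiered form of the statement: para-cocyclic in general, and cocyclic precisely under the stability assumption.
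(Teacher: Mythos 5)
Your proposal is correct and, in substance, follows the paper's own architecture: the finite case is obtained exactly as you describe, from Lemma \ref{sofocle1} applied to the right Hopf algebroid $U^*$ of Theorem \ref{Hopfdual} and conjugated through the isomorphism $\xi$ of Proposition \ref{zitty}, and the paper then disposes of the general case with a one-line observation that the para-cocyclic relations for the operators \eqref{anightinpyongyang1} hold whether or not $U_\ract$ is finitely generated projective. The only genuine divergence is in the second statement: the paper's proof cites the categorical correspondences of Lemma \ref{sofocle2} \eqref{uhu4} and \eqref{uhu5}, which strictly speaking presuppose finite generation, whereas your route --- iterate $\tau$, use the aYD compatibility \eqref{nawas1} to push the accumulated $U$-action through $\gamma$, and conclude with stability \eqref{stablehalt} --- is finiteness-free and culminates in exactly the identity $(\tau^{n+1}f)(u^1,\ldots,u^n) = \gamma\big((-)f(u^1,\ldots,u^n)\big)$ that the paper records as \eqref{altberlin} immediately after the corollary. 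What your plan buys, then, is a self-contained treatment of the non-finite case (the case actually needed for $(\Ae,A)$): the direct verification of the para-cocyclic relations via \eqref{Sch1}--\eqref{Sch9} and \eqref{carrefour1}--\eqref{carrefour2} supplies the rigorous content behind the paper's ``observe'', and the iteration argument avoids the finiteness-dependent lemma for cyclicity; you do describe rather than execute these computations, but that matches the level of detail of the paper's own proof. Your remaining remarks --- that well-definedness of $\tau$ is checked as in the discussion following Definition \ref{chelabertaschen1}, and that the aYD and stability hypotheses enter only in $\tau^{n+1}=\id$ --- also agree with the paper, the latter being stated explicitly in the proof of Theorem \ref{corsoumbertoI}.
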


\begin{proof}
The first statement follows from Lemma \ref{sofocle1}  and since the triple $(\delta'_i, \gs'_j, \tau')$ determines a para-cocyclic $k$-module, $(\delta_i, \gs_j, \tau)$ do so as well. 
However, observe at this point that now the para-cocyclic relations for the operators \eqref{anightinpyongyang1} hold in full generality, whether $U$ is finitely generated projective or not.
The second statement about cyclicity follows from Lemma \ref{sofocle2} \eqref{uhu4} and \eqref{uhu5}.
\end{proof}

For later use, note that the cyclicity condition $\tau^{n+1} = \id$ on elements of degree $n$ precisely amounts to the stability \rmref{stablehalt}, that is
\begin{equation}
\label{altberlin}
(\tau^{n+1}f)(u^1, \ldots, u^n) = \gamma\big((-)f(u^1, \ldots, u^n)\big),
\end{equation} 
which is the identity if the contramodule is stable.

\subsection{Trace functors}
\label{t'n'c}

In this subsection, we will briefly address the question of how cyclic (co)homology with values in aYD contramodules is related to trace functors (in the sense of \cite[Def.~2.1]{Kal:TTAL}, see also \cite{Kal:CHWC}); 
most details will be skipped and published elsewhere. 
%
%
The following definition is due to \cite[Def.~2.1]{Kal:TTAL}:

\begin{dfn}
\label{kaledin}
A {\em trace functor} consists of a functor $T: \cC \to \cE$ between a (unital, associative) monoidal category $(\cC, \otimes, \mathbb{1})$ and a category $\cE$, together with isomorphisms
$$
\tau_{X,Y}: T(X \otimes Y) \simeq T(Y \otimes X)
$$ 
for all $X, Y \in \cC$ that are unital (that is, $\tau_{\mathbb{1},Y} = \id$), functorial in $X$ and $Y$, as well as fulfil the property 
$$
\tau_{Z, X \otimes Y} \circ \tau_{Y, Z \otimes X} \circ \tau_{X, Y \otimes Z} = \id
$$ 
for all $X, Y, Z \in \cC$.
\end{dfn}

We illustrate the above by looking at aYD contramodules, 
inspired by but
slightly generalising the approach given in \cite[\S7]{KobSha:ACATCCOQHAAHA}. Let $\cC^\op$ denote the opposite category to a given category $\cC$.

\begin{theorem}
\label{ganzleerheute}
If $(U,A)$ is a left Hopf algebroid over an underlying left bialgebroid and if $M$ is a stable aYD contramodule over $U$ with contraaction $\gamma$, then $T := \Hom_\uhhu(-, M)$ yields a trace functor $(\umod)^\op \to \kmod$,
that is, we have 
$$
\tilde{\tau}: \Hom_\uhhu(P \otimes_\ahha N, M) \simeq \Hom_\uhhu(N \otimes_\ahha P, M) 
$$
for any  $N, P \in \umod$, given by
\begin{equation}
\label{hunga}
(\tilde\tau f)(n \otimes_\ahha p) := \gamma\big(f(p \otimes_\ahha (\cdot)n)\big),
\end{equation}
for $n \in N, p \in P$.
\end{theorem}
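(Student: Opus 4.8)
The plan is to check, for the candidate structure map $\tilde\tau$ of \eqref{hunga}, the four conditions of Definition \ref{kaledin}: well-definedness of $\tilde\tau_{P,N}$ as a $k$-linear isomorphism $\Hom_\uhhu(P\otimes_\ahha N,M)\to\Hom_\uhhu(N\otimes_\ahha P,M)$, functoriality in $N$ and $P$, unitality $\tilde\tau_{\mathbb{1},N}=\id$ (with $\mathbb{1}=A$), and the cyclicity (hexagon) relation. Invertibility will then be a formal consequence and need not be verified separately. Throughout I use that for $N,P\in\umod$ the tensor product carries the diagonal action $u(p\otimes_\ahha n)=u_{(1)}p\otimes_\ahha u_{(2)}n$, that the right $A$-action on $P\otimes_\ahha N$ is carried by the $N$-factor, and that $f\in\Hom_\uhhu(P\otimes_\ahha N,M)$ is in particular an $A$-bimodule map.

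First I would settle well-definedness. For fixed $n,p$ the assignment $g_{n,p}\colon w\mapsto f(p\otimes_\ahha wn)$ lies in $\Hom_\Aopp(U,M)$, since $g_{n,p}(w\ract a)=f(p\otimes_\ahha (wn)\ract a)=f(p\otimes_\ahha wn)\,a=g_{n,p}(w)\,a$ using only that $f$ is right $A$-linear; hence $\gamma(g_{n,p})$ is defined, and balancedness of $\tilde\tau f$ over $A$ is routine from \eqref{carrefour3}--\eqref{passionant}. The substantive point is left $U$-linearity. The aYD-contramodule compatibility \eqref{nawas1} gives
\[
u\big((\tilde\tau f)(n\otimes_\ahha p)\big)=u\big(\gamma(g_{n,p})\big)\overset{\scriptscriptstyle\eqref{nawas1}}{=}\gamma\big(u_{+(2)}\,f(p\otimes_\ahha u_-(-)\,u_{+(1)}\,n)\big),
\]
and pushing $u_{+(2)}$ through the $U$-linear map $f$ along the diagonal action turns the integrand into $f(u_{+(2)(1)}p\otimes_\ahha u_{+(2)(2)}u_-(-)\,u_{+(1)}n)$. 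Applying the coproduct to the middle tensorand of \eqref{Sch4} and then contracting via \eqref{Sch2} yields the elementwise identity $u_{+(2)(1)}\otimes_\ahha u_{+(2)(2)}u_-\otimes_\ahha u_{+(1)}=u_{(2)}\otimes_\ahha 1\otimes_\ahha u_{(1)}$, which collapses the integrand to $f(u_{(2)}p\otimes_\ahha (-)\,u_{(1)}n)$, i.e.\ exactly $(\tilde\tau f)(u_{(1)}n\otimes_\ahha u_{(2)}p)$. This is precisely $U$-linearity of $\tilde\tau f$. Functoriality in $N$ and $P$ is immediate from the naturality of \eqref{hunga}.

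Unitality I expect to be short and to pin down the role of stability. Taking $P=A=\mathbb{1}$ and using $A\otimes_\ahha N\cong N$, a morphism $f\in\Hom_\uhhu(N,M)$ satisfies $f(wn)=w\,f(n)$, so the internal map degenerates to $(-)f(n)\colon w\mapsto w\,f(n)$; stability \eqref{stablehalt} then gives $(\tilde\tau f)(n)=\gamma((-)f(n))=f(n)$, whence $\tilde\tau f=f$. Thus unitality is exactly the stability condition, in harmony with the cyclicity computation \eqref{altberlin}.

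The main obstacle is the hexagon identity $\tilde\tau_{Z,X\otimes Y}\circ\tilde\tau_{Y,Z\otimes X}\circ\tilde\tau_{X,Y\otimes Z}=\id$ on $\Hom_\uhhu(X\otimes_\ahha Y\otimes_\ahha Z,M)$, which is the three-object incarnation of the cocyclic relation $\tau^3=\id$. Each of the three factors inserts one application of $\gamma$, so the composite is a threefold nested contraaction; the strategy is to merge the nesting using the associativity axiom \eqref{carrefour1} twice (with \eqref{nawas1} and the translation-map relations \eqref{Sch2}--\eqref{Sch6} used to reorganise the accumulated $u_+,u_-$ and coproduct data as in the $U$-linearity step above) into a single contraaction of the shape $\gamma\big((-)\,f(x\otimes_\ahha y\otimes_\ahha z)\big)$, which then collapses to $f(x\otimes_\ahha y\otimes_\ahha z)$ by stability \eqref{stablehalt}, exactly as in \eqref{altberlin}. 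The genuine work is the bookkeeping of the translation map through three cyclic shifts; this parallels Corollary \ref{chelabertaschen2} but does not follow from it, since here $X,Y,Z$ are arbitrary $U$-modules rather than tensor powers of $U$. Finally, invertibility needs no extra argument: specialising the hexagon to $Z=\mathbb{1}$ and using unitality gives $\tilde\tau_{Y,X}\circ\tilde\tau_{X,Y}=\id$, and specialising to $X=\mathbb{1}$ gives the opposite composite, so each $\tilde\tau_{X,Y}$ is an isomorphism with inverse $\tilde\tau_{Y,X}$.
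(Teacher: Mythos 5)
Your route is genuinely different from the paper's: you verify the axioms of Definition \ref{kaledin} for the formula \eqref{hunga} directly, whereas the paper first proves that $\umod$ is biclosed (Lemma \ref{keineDokumente}), constructs a $U$-module isomorphism $\tau_\enne\colon \hom^r(N,M)\to\hom^\ell(N,M)$ with an explicit inverse $\tau^\enne$ (using \eqref{nawas1}, \eqref{Sch2}, \eqref{Sch3}, \eqref{Sch5}, \eqref{carrefour1} and stability), and then obtains $\tilde\tau$ as the composite $\xi^{-1}\circ\Hom_\uhhu(P,\tau_\enne)\circ\eta$ of the two adjunctions \eqref{ad1}--\eqref{ad2}. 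The parts you actually carry out are correct: your key identity $u_{+(2)(1)}\otimes_\ahha u_{+(2)(2)}u_-\otimes u_{+(1)}=u_{(2)}\otimes_\ahha 1\otimes u_{(1)}$ does follow from \eqref{Sch4} and \eqref{Sch2}, so $\tilde\tau f$ is indeed left $U$-linear, and your unitality argument correctly isolates stability \eqref{stablehalt} as the responsible axiom.

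The genuine gap is the hexagon, in two respects. First, the mechanism you sketch for it would fail: merging the three nested contraactions with \eqref{carrefour1} and pulling the merged variable through $f$ by $U$-linearity does \emph{not} produce $\gamma\big((-)f(x\otimes_\ahha y\otimes_\ahha z)\big)$; it produces $\gamma\big((-)_{(2)}(-)_{(1)}f(x\otimes_\ahha y\otimes_\ahha z)\big)$, that is, the contraaction applied to $u\mapsto u_{(2)}u_{(1)}\,f(x\otimes_\ahha y\otimes_\ahha z)$, and stability alone cannot collapse this. What is missing is the identity $\gamma\big((-)_{(2)}\,h((-)_{(1)})\big)=\gamma(h)$ for $h\in\Hom_\Aopp(U,M)$, which is obtained not by forward bookkeeping of cyclic shifts but by running the axioms backwards: expand $\gamma(h)=\gamma\big((-)\gamma(h)\big)$ by \eqref{stablehalt}, apply \eqref{nawas1}, merge with \eqref{carrefour1}, and contract $t_{(1)+(2)}\,h\big(t_{(1)-}t_{(2)}t_{(1)+(1)}\big)$ to $t_{(2)}h(t_{(1)})$ via \eqref{Sch3}. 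Second, and more consequentially, you route the invertibility of $\tilde\tau$ --- the ``$\simeq$'' asserted in the statement --- through this unproven hexagon. That is unnecessary and leaves the central claim unestablished: the two-fold identity $\tilde\tau_{N,P}\circ\tilde\tau_{P,N}=\id$ has a direct three-line proof (merge the two contraactions by \eqref{carrefour1}, use $U$-linearity of $f$ to reach $\gamma\big((-)f(p\otimes_\ahha n)\big)$, conclude by \eqref{stablehalt}), which is in essence the paper's computation $\tau^\enne\circ\tau_\enne=\id$. So while the paper, like you, leaves the hexagon to the reader, it proves the isomorphism unconditionally; your proposal as written proves neither.
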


In order to prove this theorem, 
we need the following lemma:

\begin{lem}
\label{keineDokumente}
\
\begin{enumerate}
\item
For every left bialgebroid $(U,A)$, the category $\umod$ is left closed monoidal, that is, has left internal Hom functors.
\item
If the left bialgebroid $(U,A)$ is a left Hopf algebroid, the category $\umod$ is right closed monoidal, that is, has right internal Hom functors.
\item
Consequently, for a left Hopf algebroid $(U,A)$ over an underlying left bialgebroid, the category $\umod$ is biclosed monoidal, that is, has both left and right internal Hom functors.
\end{enumerate}
\end{lem}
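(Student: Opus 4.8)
The plan is to realise each internal Hom as an $A$-linear Hom-space equipped with a left $U$-action, and to obtain the two adjunctions by lifting the ordinary tensor--hom adjunction over the base $A$ along the (strong monoidal) forgetful functor $\umod \to \amoda$. Recall that the monoidal product is the diagonal one, $M \otimes_\ahha N = M_\ract \otimes_\ahha {}_\lact N$ with $u(m \otimes_\ahha n) = u_{(1)} m \otimes_\ahha u_{(2)} n$ and unit object $A$; at the level of $A$-bimodules one has $a \lact (m \otimes_\ahha n) \ract b = (a \lact m) \otimes_\ahha (n \ract b)$, so that $\amoda$ already carries its two internal Homs, given by the left- and right-$A$-linear maps. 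The task is thus to show that these two lift to $\umod$, the right one requiring the translation map and the left one only the coproduct.

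I would treat the substantive part (ii) first. Here the right internal Hom of $M, P \in \umod$ is $\Hom_\Aopp(M,P)$ endowed with the left $U$-action \eqref{gianduiotto1}, $(u \rightslice f)(m) = u_+\bigl(f(u_- m)\bigr)$, which is available precisely because $U$ is a left Hopf algebroid. First I would check that \eqref{gianduiotto1} descends to the $A$-balanced Hom-space and is a genuine unital left action, using the Takeuchi constraint \eqref{Sch1} and the multiplicativity \eqref{Sch6} of the translation map. Then I would establish the adjunction $\Hom_\uhhu(N \otimes_\ahha M, P) \simeq \Hom_\uhhu(N, \Hom_\Aopp(M,P))$ as the restriction of the $A$-level currying $g \mapsto \{\, n \mapsto (m \mapsto g(n \otimes_\ahha m))\,\}$. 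The only point to verify is that currying matches $U$-linear maps on both sides: writing out $(u \rightslice \hat g(n))(m) = u_+\, g(n \otimes_\ahha u_- m) = g\bigl(u_{+(1)} n \otimes_\ahha u_{+(2)} u_- m\bigr)$ and applying \eqref{Sch2}, namely $u_{+(1)} \otimes_\ahha u_{+(2)} u_- = u \otimes_\ahha 1$, this collapses to $g(un \otimes_\ahha m) = \hat g(un)(m)$, so $\hat g$ is $U$-linear; the converse direction is analogous. Naturality in all three variables is routine, which yields the right internal Hom functor and hence part (ii).

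For part (i) I would run the mirror construction: the left internal Hom $\langle M,P\rangle$ is the space of left-$A$-linear maps ${}_\ahha\Hom(M,P)$ equipped with the left $U$-action built from $\gD_\ell$ and the module structures alone (one of the actions tabulated in \cite[Prop.~3.1.1]{CheGavKow:DFOLHA}), and the desired adjunction $\Hom_\uhhu(M \otimes_\ahha N, P) \simeq \Hom_\uhhu(N, \langle M,P\rangle)$ is again the restriction of the $A$-bimodule tensor--hom adjunction, now curried in the first variable so that $\hat g(n)(m) = g(m \otimes_\ahha n)$ is left-$A$-linear in $m$. Because this action involves only the left bialgebroid axioms and never the translation map, the statement holds for an arbitrary left bialgebroid, as claimed. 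Part (iii) is then immediate: a category that is simultaneously left and right closed is by definition biclosed.

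The step I expect to be the main obstacle is the bookkeeping in part (ii): one has to juggle the Takeuchi--Sweedler membership \eqref{Sch1} together with \eqref{Sch2}, \eqref{Sch6} and \eqref{Sch9} in order to simultaneously (a) see that \eqref{gianduiotto1} is well defined on the balanced Hom-space and is associative and unital, and (b) transport $U$-linearity across the currying in both directions. Care is also needed to pair the correct one-sided $A$-linearity ($\Hom_\Aopp$ in part (ii) versus ${}_\ahha\Hom$ in part (i)) with the diagonal tensor product, so that the $A$-level adjunction restricts cleanly; choosing the wrong side makes the curried map fail to be $A$-balanced, and the whole scheme collapses. A secondary delicate point is part (i): since no translation map is available there, the left internal Hom structure must be produced purely from $\gD_\ell$, so one must be sure that the action from \cite{CheGavKow:DFOLHA} genuinely realises the right adjoint and not merely an enrichment.
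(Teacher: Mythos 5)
Your part (ii) is correct and is essentially the paper's own argument: the right internal Hom is $\hom^r(N,M):=\Hom_\Aopp(N,M)$ with the action \eqref{gianduiotto1}, the adjunction is plain currying, and $U$-linearity is transported through the currying by \eqref{Sch2} in one direction and---in the direction you left as ``analogous''---by \eqref{Sch3}. Part (iii) is indeed formal. The gap is in part (i).

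For a general left bialgebroid there is \emph{no} left $U$-action on ${}_\ahha\Hom(M,P)$ making it the left internal Hom---not one of the actions of \cite[Prop.~3.1.1]{CheGavKow:DFOLHA} (the action \eqref{gianduiotto1} on Hom-spaces between left $U$-modules, which is what this paper quotes from there, is built from the translation map), and in fact no action whatsoever. Take $A=k$ and $U=k[x]$ the monoid bialgebra of $(\N,+)$, so that $x$ is grouplike; this is a left bialgebroid which is not left Hopf, since $x$ is not invertible. Let $M=k$ with $x$ acting by $0$, and let $P=A=k$ be the monoidal unit, on which $x$ acts by $\gve(x)=1$. For every $N$, the diagonal action of $x$ on $M\otimes_\ahha N$ is zero, so any $U$-linear map $f:M\otimes_\ahha N\to P$ satisfies $f(w)=f(xw)=f(0)=0$; hence $\Hom_\uhhu(M\otimes_\ahha N,P)=0$. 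But taking $N=U$ in your proposed adjunction gives $\Hom_\uhhu(U,\langle M,P\rangle)\simeq\langle M,P\rangle$, which equals $\Hom_k(k,k)\simeq k\neq 0$ whenever $\langle M,P\rangle$ has underlying space ${}_\ahha\Hom(M,P)$, whatever the action. So the adjunction fails for every candidate action. The underlying misstep is the expectation that internal Homs lift along the strong monoidal forgetful functor $\umod\to\amoda$: strong monoidality does not make the forgetful functor \emph{closed}, and realising internal Homs on $A$-linear Hom-spaces is exactly what requires the translation map (your part (ii)).

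The paper's proof of (i) avoids this by changing the underlying object: it sets $\hom^\ell(N,M):=\Hom_\uhhu(N\otimes_\ahha U,M)$, with $U$ acting by right multiplication in the $U$-slot, and obtains the adjunction $\Hom_\uhhu(N\otimes_\ahha P,M)\simeq\Hom_\uhhu(P,\hom^\ell(N,M))$ from the natural isomorphism $N\otimes_\ahha P\simeq(N\otimes_\ahha U)\otimes_\uhhu P$ together with the ordinary Hom-tensor adjunction over the ring $U$; no Hopf structure enters, but the internal Hom is not an $A$-linear Hom-space (in the example above, $\hom^\ell(M,P)=\Hom_\uhhu(M\otimes_\ahha U,P)=0$, consistently). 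A Hom-space model of $\hom^\ell$ appears in the paper only later, in the proof of Theorem \ref{ganzleerheute}, via an isomorphism $\hom^r(N,M)\simeq\hom^\ell(N,M)$, and that requires $U$ to be left Hopf and the coefficients to be a stable aYD contramodule. Since your part (iii) rests on your part (i), the proposal as written does not establish the lemma.
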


\begin{proof}
The first part is, 
in a standard way,
seen as follows: for $M, N, P \in \umod$ over a left bialgebroid $(U,A)$, the customary isomorphism 
$
N \otimes_\ahha P \to (N \otimes_\ahha U) \otimes_\uhhu P, \ n \otimes_\ahha p \mapsto (n \otimes_\ahha 1) \otimes_\uhhu  p, 
$
with inverse $(n \otimes_\ahha u) \otimes_\uhhu p \mapsto n \otimes_\ahha up$, 
induces an adjunction
\begin{equation}
\label{ad1}
\xi: \Hom_\uhhu(N \otimes_\ahha P, M) \to \Hom_\uhhu(P, \hom^\ell(N, M))
\end{equation}
by 
$
[(\xi f)(p)](n \otimes_\ahha u) := f(n \otimes_\ahha up)
$
and inverse $(\xi^{-1} g)(n \otimes_\ahha p) := [g(p)](n \otimes_\ahha 1_\uhhu)$,
where we set  $\hom^\ell(N, M) := \Hom_\uhhu(N \otimes_\ahha U, M)$ equipped with the left $U$-action given by right multiplication on $U$ in the argument.

As for the second part, 
let again $M, N, P \in \umod$.
The right internal Homs are defined by $\hom^r(M, N) := \Hom_\Aopp(M, N)$ equipped with the left $U$-action \rmref{gianduiotto1}, along with the adjunction morphism 
$
\zeta: \Hom_\uhhu(P \otimes_\ahha N, M)
\to \Hom_\uhhu(P, \hom^r(N, M))
$
given by 
$(\zeta f)(p) := f(p \otimes_\ahha -)$ for $p \in P$. 
%
To see that $\zeta f$ indeed lands in $\Hom_\uhhu(P, \hom^r(N, M))$, we check the $U$-linearity: 
\begin{equation*}
\begin{split}
(\zeta f)(up) &= f(up \otimes_\ahha -) = f(u_{+(1)}p \otimes_\ahha u_{+(2)}u_-(-))
= u_+f(p \otimes_\ahha u_-(-)) 
\\
&= u \rightslice ((\zeta f)(p)),
\end{split}
\end{equation*}
using \rmref{Sch2} and \rmref{gianduiotto1} and the diagonal action on $P \otimes_\ahha N$. In the other direction, define 
$
\eta: \Hom_\uhhu(P, \hom^r(N, M)) \to \Hom_\uhhu(P \otimes_\ahha N, M)
$
by $(\eta g)(p \otimes_\ahha n) := [g(p)](n)$, and that $\eta g$ indeed lands in $\Hom_\uhhu(P \otimes_\ahha N, M)$ follows from 
\begin{equation*}
\begin{split}
(\eta g)(u_{(1)}p \otimes_\ahha u_{(2)}n) &= [g(u_{(1)}p)](u_{(2)}n) 
= [u_{(1)} \rightslice (g(p))](u_{(2)}n) 
\\
&= u_{(1)+} \big([g(p)](u_{(1)-} u_{(2)}n)\big)
=u \big([g(p)](n)\big),
\end{split}
\end{equation*}
for $p \in P$, $n \in N$, using \rmref{Sch3} and \rmref{gianduiotto1} again. Finally, that $\zeta$ and $\eta$ are indeed mutual inverses follows as in the classical Hom-tensor adjunction, and we obtain 
\begin{equation}
\label{ad2}
 \Hom_\uhhu(P \otimes_\ahha N, M)
\simeq \Hom_\uhhu(P, \hom^r(N, M)).
\end{equation}
The third part is an obvious consequence of the first two statements.
\end{proof}

\begin{proof}[Proof of Theorem \ref{ganzleerheute}]
Comparing the two adjunctions \rmref{ad1} and \rmref{ad2}
from Lemma \ref{keineDokumente} if $(U,A)$ is a left Hopf algebroid, one sees that it is enough to find a $U$-module isomorphism from $\hom^r(N, M)$ to $\hom^\ell(N, M)$ to prove the statement. 
Note first that for $N, M \in \umod$ and $f \in \hom^r(N,M)$, one obviously has $f((-)n) \in \Hom_\Aopp(U,M)$. 
Hence, if $M$ is also a right $U$-contramodule, it makes sense to define
$$
\tau_\enne: \hom^r(N, M) \to \hom^\ell(N, M), \quad f \mapsto \{n \otimes_\ahha u \mapsto \gamma\big((u \rightslice f)((\cdot)n)\big)\},
$$
that is, $(\tau_\enne f)(n \otimes_\ahha u) = \gamma\big((u \rightslice f)((\cdot)n)\big)$. 
That indeed $\tau_\enne f$ lands in $\hom^\ell(N,M)$ follows if $M$ fulfils the aYD condition \rmref{nawas2}: one has  $v \big((\tau_\enne f)(n \otimes_\ahha u)\big) = \gamma\big(((v_{(2)}u) \rightslice f)((\cdot)v_{(1)}n)\big) = (\tau_\enne f)(v_{(1)} n \otimes_\ahha v_{(2)} u) =  (\tau_\enne f)(v (n \otimes_\ahha u))$ for all $v \in U$, and that this map is a morphism of left $U$-modules is also straightforward: for $v \in V$, we have 
$$
(v(\tau_\enne f))(n \otimes_\ahha u) = (\tau_\enne f)(n \otimes_\ahha uv) = 
 \gamma\big(((uv) \rightslice f)((\cdot)n)\big) = (\tau_\enne (v \rightslice f))(n \otimes_\ahha u),
$$
where we denoted the left $U$-action on $ \hom^\ell(N, M)$ just by juxtaposition.
As for its inverse, set
$$
\tau^\enne: \hom^\ell(N,M) \to \hom^r(N,M), \quad g \mapsto \{n \mapsto \gamma(g(n \otimes_\ahha -))\}, 
$$
that is, $(\tau^\enne g)(n) = \gamma(g(n \otimes_\ahha -))$, and by 
\begin{eqnarray*}
(u \rightslice \tau^\enne g)(n) &=& u_+ (\tau^\enne g)(u_- n)
\\
&=&
u_+ \gamma(g(u_-n \otimes_\ahha -)) 
\\
&
\overset{\scriptscriptstyle{\rmref{nawas1}}}{=} 
&
\gamma\big(u_{++(2)} g(u_-n \otimes_\ahha u_{+-} (-) u_{++(1)})\big) 
\\
&
\overset{\scriptscriptstyle{\rmref{Sch5}}}{=} 
&
\gamma\big(u_{+(2)} g(u_{-(1)} n \otimes_\ahha u_{-(2)} (-) u_{+(1)})\big) 
\\
&
=
&
\gamma\big(u_{+(2)}u_- g(n \otimes_\ahha (-) u_{+(1)})\big) 
\\
&
\overset{\scriptscriptstyle{\rmref{Sch2}}}{=} 
&
\gamma\big(g(n \otimes_\ahha (-) u)\big)
\end{eqnarray*}
for all $u \in U$, 
we see that this is a map of $U$-modules as well.
We then compute
\begin{equation*}
\begin{split}
(\tau^\enne(\tau_\enne f))(n) &= \gamma\big((\tau_\enne f)(n \otimes_\ahha -) \big)
\\
&
= \dot\gamma\big(\ddot\gamma\big((\cdot)_{+} f((\cdot)_{-}(\cdot\cdot)n)\big)\big)
\\
&
= \dot\gamma\big((\cdot)_{(1)+} f((\cdot)_{(1)-}(\cdot)_{(2)}n)\big)
\\
&
= \gamma((-)f(n)), 
\end{split}
\end{equation*}
where we used the contraassociativity \rmref{carrefour1} in the third step and 
\rmref{Sch3} in the fourth. Hence, if the aYD contramodule $M$ is stable, then $\gamma((-)f(n)) = f(n)$, and therefore $\tau^\enne \circ \tau_\enne = \id$; likewise, one proves  $\tau_\enne \circ \tau^\enne = \id$. These maps are therefore mutual inverses and $\tau_\enne$ is a $U$-module isomorphism if $M$ is stable. 

In total, we get a commutative diagram
$$
 \xymatrix{
\Hom_\uhhu(P \otimes_\ahha N, M)  \ar[rr]^-{\eta}\ar@{.>}[d]
&& \Hom_\uhhu(P, \hom^r(N, M)) \ar[d]^{\Hom_\uhhu(P,\tau_\enne)} \\
 \Hom_\uhhu(N \otimes_\ahha P, M) && \Hom_\uhhu(P, \hom^\ell(N, M)), \ar[ll]_{\xi^{-1}}
}
$$
and we need to show that $\tilde\tau$ given in \rmref{hunga} fits into this diagram at the dotted arrow, that is, that $\tilde\tau = \xi^{-1} \circ \Hom_\uhhu(P,\tau_\enne) \circ \eta$. More precisely, for $f \in \Hom_\uhhu(P \otimes_\ahha N, M)$, we have
\begin{equation*}
\begin{split}
(\xi^{-1} \circ \Hom_\uhhu(P,\tau_\enne) \circ \eta f)(n \otimes_\ahha p)
&= [(\Hom_\uhhu(P,\tau_\enne) \circ \eta f)(p)](n \otimes_\ahha 1_\uhhu) \\
&= \gamma\big( [(\eta f)(p)]((-)n)\big) \\
&= \gamma\big( f(p \otimes_\ahha (-)n)\big) \\
&= (\tilde\tau f)(n \otimes_\ahha p). 
\end{split}
\end{equation*}
The proof of the second displayed equation in the Definition \ref{kaledin} of a trace functor is left to the reader.
This concludes the proof of the theorem.
\end{proof}

We proceed by showing how to obtain from Theorem \ref{ganzleerheute}, that is, from a trace functor the cyclic operator $\tau$ in \rmref{anightinpyongyang1} on the complex $C^\bull(U,M)$. 

Computing the homology of  $C^\bull(U,M)$ could be achieved by considering the homology of $\Hom_\uhhu({\rm Bar}_\bull(U), M)$, where ${\rm Bar}_n(U) := (\due U \blact \ract)^{\otimes_\Aopp n+1}$ is the bar resolution of $A$, which is a $U$-module by multiplication on the first factor. On the other hand, ${\rm Bar}_\bull(U)$ is {\em not} a monoidal product of two $U$-modules in $\umod$, which is what we need in Theorem \ref{ganzleerheute}; but thanks to the Hopf-Galois map (or rather its ``higher'' version \cite[Lem.~4.10]{KowKra:CSIACT}), there is a $U$-module isomorphism 
$$
\alpha_\ell: {\rm Bar}_n(U) = (\due U \blact \ract)^{\otimes_\Aopp n+1} \overset{\simeq}{\lra} (\due U \lact \ract)^{\otimes_\ahha n+1}.
$$
Observe then that for any $N,M \in \umod$, one has an isomorphism of $k$-modules
$$
\xi: \Hom_\Aopp(N,M) \to \Hom_\uhhu(U \otimes_\ahha N, M), \quad f \mapsto (\cdot) \rightslice f,
$$
with inverse $\xi^{-1}: g \mapsto g(1 \otimes_\ahha -)$. This follows from 
$$
g(u \otimes_\ahha n) \overset{\scriptscriptstyle{\rmref{Sch2}}}{=} g(u_{+(1)} \otimes_\ahha u_{+(2)} u_- n) = u_+ g(1 \otimes_\ahha u_- n)
$$
for $g \in  \Hom_\uhhu(U \otimes_\ahha N, M)$. 
 We then have for $N = (\due U \lact \ract)^{\otimes_\ahha n}$ the isomorphisms
 $$
 \Hom_\uhhu(U \otimes_\ahha U^{\otimes_\ahha n}, M) \simeq \Hom_\Aopp(U^{\otimes_\ahha n}, M)
 \simeq \Hom_\Aopp(U^{\otimes_\Aopp n}, M) = C^n(U,M),
$$
 and can apply Theorem \ref{ganzleerheute} on the left hand side to obtain (or rather reproduce) the cyclic operator on the right hand side from Theorems \ref{B} \& \ref{B1}; 
more precisely, define for $f \in C^\bull(U,M)$
$$
  \tau f :=  \big(\Hom_\Aopp(\ga_\ell, M) \circ \xi^{-1} \circ \tilde\tau \circ \xi \circ  \Hom_\Aopp(\ga^{-1}_\ell, M)\big)(f), 
 $$
 which will be shown to coincide with the cyclic operator in \rmref{anightinpyongyang1}. For the sake of readability, we will do this in degree one only (since the maps  $\Hom_\Aopp(\ga_\ell, M)$ are trivial in this case):
\begin{equation*}
\begin{split}
(\tau f)(u) &
=   
\Big(\big(\Hom_\Aopp(\ga_\ell, M) \circ \xi^{-1} \circ \tilde\tau \circ \xi \circ  \Hom_\Aopp(\ga^{-1}_\ell, M)\big)(f)\Big)(u)
\\
&
=   
\Big(\big(\tilde\tau \circ \xi \circ  \Hom_\Aopp(\ga^{-1}_\ell, M)\big)(f)\Big)(1 \otimes_\ahha u)
\\
&
=   
\gamma\Big(\big(\big(\xi \circ  \Hom_\Aopp(\ga^{-1}_\ell, M)\big)(f)\big)(u \otimes_\ahha (-))\Big)
\\
&
=   
\gamma\big((u \rightslice f)(-)\big) = \gamma\big((u_+ f)(u_-(-))\big),
\end{split}
 \end{equation*}
which is $\tau$ in \rmref{anightinpyongyang1} for $n=1$. For higher degrees the computation is similar, but much more technical due to the non-trivial maps $\Hom_\Aopp(\ga_\ell, M)$.

\begin{rem}
\label{tatatatataaa}
In \cite[\S6]{KobSha:ACATCCOQHAAHA}, the biclosedness of $\umod$ is proven if $U$ is assumed to be a full Hopf algebroid with invertible antipode; Lemma \ref{keineDokumente} is slightly more general inasmuch as there are left Hopf algebroids which are not full Hopf algebroids (in particular, do not carry neither a right bialgebroid structure nor an antipode): an example is given by the universal enveloping algebra of a Lie-Rinehart algebra \cite{KowPos:TCTOHA}. Still in \cite{KobSha:ACATCCOQHAAHA}, it is shown that the category of aYD contramodules over a full Hopf algebroid is equivalent to the (weak) center (in the sense of \cite[\S2.8]{EtiNikOst:FCAHT})
of $(\umod)^\op$ as a bimodule category over $\umod$. We guess that the proof carries over when relaxing to left Hopf algebroids, but leave this to a future project.
\end{rem}

\begin{center}
* \quad * \quad *
\end{center}

Higher structures on the cohomology groups $H^\bull(U,M)$ resp.\ $\Ext^\bull_U(A,M)$ will be the main objects of study in the next section.

\section{Operadic structures and $\Ext$ as a Batalin-Vilkovisky algebra}

In \cite{Kow:BVASOCAPB}, we showed that $H^\bull(U,M)$ resp.\ $H^\bull_\co(U,M)$ (that is, $\Ext^\bull_U(A,M)$ resp.\ $\Cotor^\bull_U(A,M)$ if $U_\ract$ is projective resp.\ flat over $A$) are Gerstenhaber algebras if $M$ is a braided commutative YD algebra over a left bialgebroid $U$. On top, if $U$ were a left Hopf algebroid, then $H^\bull_\co(U,M)$ even is a Batalin-Vilkovisky algebra.

In this section, we want to add 
in which cases  $\Ext^\bull_U(A,M)$ becomes a Batalin-Vilkovisky algebra as well:

\begin{theorem}
\label{corsoumbertoI}
\
\begin{enumerate}
\compactlist{99}
\item
Let $(U,A)$ be a left Hopf algebroid and let $A$ be a stable aYD contramodule over $U$. Then $C(U,A)$ becomes a cyclic operad
with multiplication.
\item
Let $(U,A)$ be a left Hopf algebroid with $U_\ract$ finitely generated $A$-projective, and let 
$M$ be a braided commutative YD algebra. If $A$ is a stable aYD contramodule, $C(U,M)$ becomes a cyclic operad with multiplication.
\end{enumerate}
\end{theorem}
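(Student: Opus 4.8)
The plan is to produce the cyclic operad with multiplication on $C(U,M)$ by combining three ingredients already available: the operad-with-multiplication structure established in \cite{Kow:BVASOCAPB} for braided commutative Yetter-Drinfel'd algebra coefficients, the (para-)cocyclic $k$-module structure of Corollary \ref{chelabertaschen2}, and the module-category construction of Proposition \ref{jetztnkaffee}; the Batalin-Vilkovisky statement for cohomology then drops out of Theorem \ref{holl}. Part (i) is the instance $M = A$, where the finiteness hypothesis may be dropped because, as noted after Corollary \ref{chelabertaschen2}, the para-cocyclic relations for \eqref{anightinpyongyang1} hold regardless of projectivity; I therefore concentrate on part (ii).

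First I would equip the coefficient $M$ with a contraaction. As $A$ is a stable aYD contramodule and $M$ is in particular an object of $\yd$, the finiteness of $U_\ract$ allows me to apply Proposition \ref{jetztnkaffee}(ii): formula \eqref{ludovisi2} turns $M$ into an aYD contramodule over $U$ whose underlying left $U$-action is the given Yetter-Drinfel'd action. Corollary \ref{chelabertaschen2} then supplies the para-cocyclic operators $(\delta_\bull, \sigma_\bull, \tau)$ of \eqref{anightinpyongyang1} on $C^\bull(U,M)$. To promote this to a genuine cocyclic module --- equivalently, to obtain $\tau^{n+1} = \id$ in the operad --- I must verify that the induced contramodule is stable; by \eqref{altberlin} this amounts exactly to $\gamma((-)m) = m$, and I would derive it from the stability of $A$ together with the braided commutativity \eqref{druento3} of $M$, running the defining formula \eqref{ludovisi2} through the dual-basis decomposition \eqref{schizzaestrappa1}. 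This stability check is one of the two delicate points, since, as the remark following Proposition \ref{jetztnkaffee} cautions, stability need not survive the module-category construction in general.

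Next I would recall from \cite{Kow:BVASOCAPB} that, $M$ being a braided commutative Yetter-Drinfel'd algebra, $C(U,M)$ is an operad with multiplication: the partial compositions $\circ_i$ are the usual insertions, the multiplication $\mu \in C^2(U,M)$ is assembled from the braided commutative product $\cdot_\emme$ and the coaction, and $e \in C^0(U,M)$, $\mathbb{1} \in C^1(U,M)$ are the evident unit and identity. It then remains to check the cyclic-operad axioms \eqref{superfluorescent1} linking $\tau$ with the $\circ_i$, namely $\tau(\varphi \circ_1 \psi) = \tau\psi \circ_q \tau\varphi$ and $\tau(\varphi \circ_i \psi) = \tau\varphi \circ_{i-1}\psi$ for $2 \le i \le p$, together with $\tau\mathbb{1} = \mathbb{1}$ and the normalisation $\tau\mu = \mu$.

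This last verification is the main obstacle: threading the translation map $u \mapsto u_+ \otimes_\Aopp u_-$ through several tensor factors while moving the contraaction $\gamma$ past the insertions is a lengthy bookkeeping exercise. To keep it tractable I would transport the whole computation along the isomorphism $\xi$ of \eqref{mondrian1} to the coHochschild complex $C^\bull_\co(U^*, M)$ of the right Hopf algebroid $U^*$, where by Proposition \ref{zitty} the operators of Lemma \ref{sofocle1} assume the simpler shape \eqref{anightinpyongyang2}; verifying the cyclic relations for $\tau'$ there and pulling back along $\xi$ is far less error-prone than attacking \eqref{anightinpyongyang1} directly, and reduces matters to the translation-map identities \eqref{Sch1}--\eqref{Sch9} and the contra-associativity \eqref{carrefour1}. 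Once all axioms hold, $C(U,M)$ is a cyclic operad with multiplication and Theorem \ref{holl} yields the Batalin-Vilkovisky structure on $H^\bull(U,M)$, that is, on $\Ext^\bull_U(A,M)$ when $U_\ract$ is projective.
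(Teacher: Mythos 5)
Your plan for part (ii) is essentially the paper's own proof: endow $M$ with the aYD contramodule structure coming from Proposition \ref{jetztnkaffee} and Lemma \ref{sofocle2}, use Proposition \ref{zitty} to write $\tau = \xi \circ \tau' \circ \xi^{-1}$, and reduce everything to the coHochschild complex $C^\bull_\co(U^*,M)$ of the dual. The one substantive check in this route --- which you leave implicit in the phrase ``transport the whole computation'' --- is that $\xi$ also intertwines the operadic compositions, {\it i.e.}\ $f \circ_j g = \xi\big(\xi^{-1}(f) \circ^\co_j \xi^{-1}(g)\big)$ for \eqref{maxdudler1a} and \eqref{maxdudler2}; the paper's proof consists almost entirely of this verification, quoting the right-bialgebroid version of \cite[Thm.~3.10]{Kow:BVASOCAPB} for the cyclicity (and the stability of the induced structure on $A \otimes_\Aopp M \simeq M$) on the dual side rather than reproving it, as you propose to do.

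The genuine gap is part (i). You dispose of it as ``the instance $M = A$'' of part (ii) with the finiteness hypothesis dropped, on the grounds that the para-cocyclic relations for \eqref{anightinpyongyang1} hold without projectivity. But that observation (Corollary \ref{chelabertaschen2}) concerns only the cocyclic $k$-module structure; it says nothing about the compatibility relations \eqref{superfluorescent1} between $\tau$ and the insertions $\circ_i$, which are exactly what make $C(U,A)$ a cyclic operad. In your proposal those relations are obtained \emph{only} by transport along $\xi$, and this machinery simply does not exist when $U_\ract$ is not finitely generated $A$-projective: $U^*$ is then not known to be a right bialgebroid, the map \eqref{zuschlag} and hence $\xi$ from \eqref{mondrian1} need not be isomorphisms, and Proposition \ref{zitty} is unavailable. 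This is precisely why the paper proves part (i) by a direct computation on $C^\bull(U,A)$, checking $\tau(\phi \circ_1 \psi) = \tau\psi \circ_q \tau\phi$ (and its companions) by hand from the translation-map identities \eqref{Sch2}, \eqref{Sch4}, \eqref{Sch5}, \eqref{Sch9}, the counit properties \eqref{alsodoch}, and the contra-associativity \eqref{carrefour1}, with the stable aYD contramodule hypothesis entering only through $\tau^{n+1} = \id$. Note that the remark following the theorem states that the authors were \emph{unable} to prove part (ii) without finiteness; your reduction of (i) to (ii) therefore runs the logical dependence backwards --- it is the dual-free computation of (i) that reaches cases the dual-transport argument of (ii) cannot.
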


\begin{rem}
We believe that the second part is true even if $U_\ract $ is {\em not} finitely generated $A$-projective (the first part shows that there are instances of this situation), but for the moment we were not able to prove this.
\end{rem}

\begin{proof}[Proof of Theorem \ref{corsoumbertoI}]

Part (i): recall that
the operadic composition on $C(U,A)$ is given by
\begin{footnotesize}
\begin{equation}
\label{maxdudler1}
\begin{split}
        & (\varphi \circ_i \psi)(u^1, \ldots, u^{p+q-1}) 
\\ &
\! := \varphi(u^1, \ldots, u^{p-i}, 
\psi(u^{p-i+1}_{(1)}, \ldots, u^{p+q-i}_{(1)}) \lact u^{p-i+1}_{(2)} \cdots u^{p+q-i}_{(2)},        
u^{p+q-i+1}, \ldots, u^{p+q-1}), 
\end{split}
\end{equation}
\end{footnotesize}
see \cite[Eq.~(6.15)]{Kow:GABVSOMOO} (or \cite{KowKra:BVSOEAT} for the opposite composition) for further details. We want to show that this composition together with the cyclic operator $\tau$ from \eqref{anightinpyongyang1} fulfils the criteria in Definition \ref{danton} (ii).
With the multiplication element on $C(U,A)$ given by $\mu = \gve m_\uhhu$, where $m_\uhhu$ is the product in $U$, the property $\tau\mu = \mu$ is a straightforward check. We furthermore need to check Eqs.~\eqref{superfluorescent1}, but we shall limit ourselves to only show the first one. For $\phi \in C^\bull(U,A)$ and $\psi \in C^q(U,A)$, one has
\begin{footnotesize}
\begin{equation*}
\begin{split}
        & \tau(\phi \circ_1 \psi)(u^1, \ldots, u^{p+q-1}) 
\\
&
 {\overset{\scriptscriptstyle{
\eqref{anightinpyongyang1}
 }}{=}}
\gamma\big(u^1_+ (\phi \circ_1 \psi)(u^2_+, \ldots, u^{p+q-1}, u^{p+q-1}_- \cdots u^{1}_- (-))\big) 
\\
&
 {\overset{\scriptscriptstyle{
\eqref{maxdudler1}
 }}{=}}
\gamma\Big(u^1_+ \phi\Big(u^2_+, \ldots, u^p_+, \psi(u^{p+1}_{+(1)}, \ldots, u^{p+q-1}_{+(1)}, u^{p+q-1}_{-(1)} \cdots u^1_{-(1)} (-)_{(1)}) \lact \\
& \qquad \qquad \qquad \qquad \qquad \qquad \qquad \qquad 
u^{p+1}_{+(2)} \cdots u^{p+q-1}_{+(2)}
u^{p+q-1}_{-(2)} \cdots u^1_{-(2)} (-)_{(2)} \Big)\Big) 
\\
&
 {\overset{\scriptscriptstyle{
\eqref{Sch5}
 }}{=}}
\gamma\Big(u^1_{++} \phi\Big(u^2_{++}, \ldots, u^p_{++}, \psi(u^{p+1}_{++(1)}, \ldots, u^{p+q-1}_{++(1)}, u^{p+q-1}_{-} \cdots u^1_{-} (-)_{(1)}) \lact \\
& \qquad \qquad \qquad \qquad \qquad \qquad \qquad \qquad 
u^{p+1}_{++(2)} \cdots u^{p+q-1}_{++(2)}
u^{p+q-1}_{+-} \cdots u^1_{+-} (-)_{(2)} \Big)\Big) 
\\
&
 {\overset{\scriptscriptstyle{
\eqref{Sch2}
 }}{=}}
\gamma\Big(u^1_{++} \phi\Big(u^2_{++}, \ldots, u^p_{++}, \psi(u^{p+1}_{+}, \ldots, u^{p+q-1}_{+}, u^{p+q-1}_{-} \cdots u^1_{-} (-)_{(1)}) \lact u^{p}_{+-} \cdots u^1_{+-} (-)_{(2)} \Big)\Big), 
\end{split}
\end{equation*}
\end{footnotesize}
whereas, on the other hand,
\begin{footnotesize}
\begin{equation*}
\begin{split}
        & (\tau\psi \circ_q \tau\psi)(u^1, \ldots, u^{p+q-1}) = \tau\psi\big(\tau\phi(u^1_{(1)}, \ldots, u^p_{(1)}) \lact u^1_{(2)} \cdots u^p_{(2)}, u^{p+1}, \ldots, u^{p+q-1} \big)
\\
&
 {\overset{\scriptscriptstyle{
\eqref{anightinpyongyang1}, \eqref{Sch9}
 }}{=}}
\dot\gamma\Big(\ddot\gamma\big( u^1_{(1)+} \phi(u^2_{(1)+}, \ldots, u^p_{(1)+}, u^p_{(1)-} \cdots  u^1_{(1)-} (\cdot\cdot))\big) 
\\
&
\qquad\qquad\qquad
\gve\big( u^1_{(2)+} \cdots u^p_{(2)+} \bract \psi\big(u^{p+1}_+, \ldots, u^{p+q-1}_+,  u^{p+q-1}_- \cdots  u^{p+1}_-  u^p_{(2)-} \cdots  u^1_{(2)-} (\cdot)\big)\big)\Big)
\\
&
 {\overset{\scriptscriptstyle{
\eqref{Sch9}, \eqref{tellmemore}
 }}{=}}
\dot\gamma\Big(\ddot\gamma\Big( u^1_{(1)+} \phi\big(u^2_{(1)+}, \ldots, u^p_{(1)+}, \psi\big(u^{p+1}_+, \ldots, u^{p+q-1}_+,  u^{p+q-1}_- \cdots  u^{p+1}_-  u^p_{(2)-} \cdots  u^1_{(2)-} (\cdot)\big) \lact 
\\
&
\qquad\qquad\qquad\qquad
u^p_{(1)-} \cdots  u^1_{(1)-} (\cdot\cdot)\big)\Big) 
\gve( u^1_{(2)+} \cdots u^p_{(2)+})\Big)
\\
&
 {\overset{\scriptscriptstyle{
\eqref{passionant}
 }}{=}}
\dot\gamma\Big(\ddot\gamma\Big( u^1_{(1)+} \phi\big(u^2_{(1)+}, \ldots, u^p_{(1)+}, \psi\big(u^{p+1}_+, \ldots, u^{p+q-1}_+,  u^{p+q-1}_- \cdots  u^{p+1}_-  u^p_{(2)-} \cdots  u^1_{(2)-} (\cdot)\big) \lact 
\\
&
\qquad\qquad\qquad\qquad
u^p_{(1)-} \cdots  u^1_{(1)-} \big(\gve( u^1_{(2)+} \cdots u^p_{(2)+}) \lact \cdot\cdot\big)\big)\Big) 
\Big)
\\
&
 {\overset{\scriptscriptstyle{
\eqref{Sch4}, \eqref{Sch9}
 }}{=}}
\dot\gamma\Big(\ddot\gamma\Big(u^1_{++} \phi\big(u^2_{++}, \ldots, u^p_{++}, \psi\big(u^{p+1}_+, \ldots, u^{p+q-1}_+,  u^{p+q-1}_- \cdots  u^1_{-} (\cdot)\big) \lact 
u^p_{+-} \cdots  u^1_{+-} (\cdot\cdot)\big)\Big)\Big), 
\end{split}
\end{equation*}
\end{footnotesize}
where in the second step we additionally used the canonical left $U$-action on $A$ given by $ua := \gve(u \bract a)$ along with the left $A$-linearity of $\gve$. Now the two last lines in the two respective computations above are equal by \eqref{carrefour1}, and hence the first of Eqs.~\eqref{superfluorescent1} defining a cyclic operad is proven. The remaining ones are left to the reader. Observe, however, that the (stable) aYD contramodule condition from Definition \ref{chelabertaschen1} is only needed for the property $\tau^{n+1} = \id$ in \eqref{superfluorescent1}, which has already been obtained by Corollary \ref{chelabertaschen2}.

Part (ii): 
in \cite[\S3.1]{Kow:BVASOCAPB}, we showed that for a braided commutative YD algebra $M$ over any bialgebroid $U$ (finitely generated or not), the map 
$$
\circ_i : C^p(U,M) \otimes C^q(U,M) \rightarrow 
        C^{p+q-1}(U,M), \qquad i = 1, \ldots p,
$$
given by 
\begin{equation}
\label{maxdudler1a}
\begin{split}
        &\ (f \circ_i g)(u^1, \ldots, u^{p+q-1}) 
\\ &\ 
:= f(u^1_{(1)}, \ldots, u^{i-1}_{(1)}, 
         g(u^{i}_{(1)}, \ldots, u^{i+q-1}_{(1)})_{(-1)} u^{i}_{(2)} \cdots u^{i+q-1}_{(2)}, 
        u^{i+q}, \ldots, u^{p+q-1}) \\
&\hspace*{4cm} \cdot_\emme \big(u^1_{(2)} \cdots u^{i-1}_{(2)} g(u^{i}_{(1)}, \ldots, u^{i+q-1}_{(1)})_{(0)}\big), 
\end{split}
\end{equation}
yields, together with 
$
        \mu := (\varepsilon \, m_\uhhu(\cdot,\cdot)) \lact 1_\emme,
$
where $m_\uhhu$ is the multiplication map of $U$, and
$
\mathbb{1} := \gve(\cdot) \lact 1_\emme
$ 
as well as 
$
e := 1_\emme
$ 
an operad with multiplication. 

On the other hand, a straightforward right bialgebroid adaptation of the formulae given in \S3.2 of {\it op.~cit.}~yields that if $U_\ract$ is finitely generated $A$-projective and $M$ again a braided commutative YD algebra (over $U^*$ this time), even $C_\co(U^*,M)$ by means of the composition
$$
\circ_i^\co : C^p_\co(U^*,M) \otimes C^q_\co(U^*,M) \rightarrow 
        C^{p+q-1}_\co(U^*,M), \qquad i = 1, \ldots p,
$$
given by
\begin{equation}
\label{maxdudler2}
\begin{split}
        &\ (m, \phi_1, \ldots, \phi_n) \circ_i^\co (n, \psi_1, \ldots, \psi_n ) 
\\ &\ 
:= 
\Big(m \cdot_\emme ( n \phi_i^{(1)})^{(0)}, \phi_1  ( n \phi_i^{(1)} )^{(1)}, 
\ldots,
\phi_{i-1}  ( n \phi_i^{(1)} )^{(i-1)},
\psi_1 \phi_i^{(2)}, 
\\
& \qquad\qquad\qquad
\ldots, \psi_q \phi_i^{(q+1)}, \phi_{i+1}, 
\ldots, \phi_p\Big),
\end{split}
\end{equation}
where we abbreviate $(m, \phi_1, \ldots, \phi_n) := m \otimes_\ahha \phi_1 \otimes_\ahha \cdots \otimes_\ahha \phi_n$,
along with the multiplication element
$
\mu := (1_{U^*}, 1_{U^*}, 1_\emme)
$,
$
\mathbb{1} := (1_{U^*}, 1_\emme),
$
and
$
e:=1_\emme
$
becomes an operad with multiplication. On top, in Theorem 3.10 in {\it op.~cit.}, this operad was shown to be cyclic with respect to the cyclic operator $\tau'$ from \eqref{anightinpyongyang2}:
 for this, one requires an aYD module structure on the base algebra $A$ over the right bialgebroid $U^*$ to produce such a structure on the braided commutative YD algebra $A \otimes_\Aopp M \simeq M$ (see the comments right before Proposition \ref{jetztnkaffee} for a short explanation of this construction), and if $M$ is stable, $C_\co(U^*,M)$ is a cyclic operad with multiplication, indeed.

To now obtain from this that $C(U,M)$ is a cyclic operad with multiplication as well, one simply dualises the aforementioned construction: to start with, from Lemma \ref{sofocle2}, part \eqref{uhu3} one obtains that a braided commutative YD algebra over the right bialgebroid $U^*$ corresponds to a braided commutative 
YD algebra over the left bialgebroid $U$; the aYD contramodule structure on $M$ over $U$ was obtained by Proposition \ref{jetztnkaffee}, part \eqref{heuteabendtiberinselkino}, along with Lemma \ref{sofocle2}, part \eqref{uhu5}, and is hence a construction precisely dual to the one that gives $M$ the structure of an aYD module over $U^*$; between the cyclic operators $\tau'$ on $C^\bull_\co(U^*,M)$ and $\tau$ on $C^\bull(U,M)$ we established in Proposition \ref{zitty} the relation 
$
\tau  = \xi \circ \tau' \circ \xi^{-1},
$
with respect to the isomorphism $\xi$ from \eqref{mondrian1};
hence, the only thing that is left to show is
$$
f \circ_j g = \xi\big(\xi^{-1}(f) \circ^\co_j \xi^{-1}(g)\big), \qquad \mbox{for} \ f \in C^p(U,M), g \in C^q(U,M),
$$
with respect to the composition operations \eqref{maxdudler1a} and \eqref{maxdudler2} above; which is a messy, but straightforward check, and fills a page or two. As an illustration, we will compute the case for $p=q=j=2$ (in the opposite direction), which contains all important steps; it is then immediately clear how to transfer this to the general case. So, 
let $m \otimes_\ahha \phi_1 \otimes_\ahha \phi_2 \in C^2_\co(U^*,M)$ and  $n \otimes_\ahha \psi_1 \otimes_\ahha \psi_2 \in C^2_\co(U^*,M)$. 
One has
\begin{footnotesize}
\begin{equation*}
\begin{array}{rcl}
&& \xi^{-1}\big(\xi(m \otimes_\ahha \phi_1 \otimes_\ahha \phi_2) \circ_2 \xi(n \otimes_\ahha \psi_1 \otimes_\ahha \psi_2)\big) 
\\
&
\!\!\!\!\!\!\!\!\!\!\!\!\!\!\!\!\!\!
{\overset{\scriptscriptstyle{
\eqref{mondrian1}, \eqref{mondrian2}, \eqref{maxdudler1a}
}}{=}} 
&
\!\!\!\!\!\!
\displaystyle \sum_{i_1, i_2, i_3} m \Big\langle \phi_1, \big\langle \phi_2, n_{(-1)} \langle \psi_1 , \langle\psi_2, {e_{i_3}}_{(1)} \  \rangle \blact {e_{i_2}}_{(1)} \rangle \lact  {e_{i_2}}_{(2)}  {e_{i_3}}_{(2)}  
\big\rangle \blact {e_{i_1}}_{(1)} \Big\rangle 
\\
&& \qquad \qquad \qquad
\cdot_\emme ({e_{i_1}}_{(2)}n_{(0)}) \otimes_\ahha e^{i_1} \otimes_\ahha e^{i_2} \otimes_\ahha e^{i_3} 
\\
&
\!\!\!\!\!\!\!\!\!\!\!\!\!\!\!\!\!\!
{\overset{\scriptscriptstyle{
\eqref{trattovideo}, \eqref{tellmemore}, \eqref{duedelue}
}}{=}} 
&
\!\!\!\!\!\!
\displaystyle \sum_{i_1, i_2, i_3} m \Big\langle \phi_1, \big\langle \phi_2^{(1)}, 
\big\langle \phi_2^{(2)}, 
\langle \phi_2^{(3)} ,  \langle \psi_2 , {e_{i_3}}_{(1)} \rangle \lact  {e_{i_3}}_{(2)} \rangle \blact
\langle \psi_1,  {e_{i_2}}_{(1)} \rangle \lact
 {e_{i_2}}_{(2)}
\big\rangle \blact 
n_{(-1)} \big\rangle 
\\
&& \qquad \qquad \qquad
\blact  {e_{i_1}}_{(1)} \Big\rangle
\cdot_\emme ({e_{i_1}}_{(2)}n_{(0)}) \otimes_\ahha e^{i_1} \otimes_\ahha e^{i_2} \otimes_\ahha e^{i_3} 
\end{array}
\end{equation*}
\end{footnotesize}
\begin{footnotesize}
\begin{equation*}
\begin{array}{rcl}
&
\!\!\!\!\!\!\!\!\!\!\!\!\!\!\!\!\!\!
{\overset{\scriptscriptstyle{
\eqref{LDMon}
}}{=}} 
&
\!\!\!\!\!\!
\displaystyle \sum_{i_1, i_2, i_3} m \Big\langle \phi_1, \big\langle \phi_2^{(1)}, 
\big\langle \psi_1 \phi_2^{(2)}, 
\langle \psi_2 \phi_2^{(3)} , {e_{i_3}} \rangle \blact  e_{i_2} \big\rangle \blact n_{(-1)} \big\rangle 
\blact {e_{i_1}}_{(1)}
\Big\rangle
\\
&& \qquad \qquad \qquad
\cdot_\emme ({e_{i_1}}_{(2)}n_{(0)}) \otimes_\ahha e^{i_1} \otimes_\ahha e^{i_2} \otimes_\ahha e^{i_3} 
\\
&
\!\!\!\!\!\!\!\!\!\!\!\!\!\!\!\!\!\!
{\overset{\scriptscriptstyle{
\eqref{Takeuchicoaction}, \eqref{pergolesi}, \eqref{bilet}
}}{=}} 
&
\!\!\!\!\!\!
\displaystyle \sum_{i_1, i_2, i_3} m 
\cdot_\emme 
\big\langle \phi_1, \langle \phi_2^{(1)}, n_{(-1)} \rangle
\blact {e_{i_1}}_{(1)}
\big\rangle
\\
&& \qquad \qquad \qquad
\lact \big(\big\langle \psi_1 \phi_2^{(2)}, 
\langle \psi_2 \phi_2^{(3)} , {e_{i_3}} \rangle \blact  e_{i_2} \big\rangle \blact {e_{i_1}}_{(2)}\big) n_{(0)} \otimes_\ahha e^{i_1} \otimes_\ahha e^{i_2} \otimes_\ahha e^{i_3} 
\\
&
\!\!\!\!\!\!\!\!\!\!\!\!\!\!\!\!\!\!
{\overset{\scriptscriptstyle{
\eqref{lesbrigittes2}, \eqref{LDMon}, \eqref{vetrorotto3}
}}{=}}
&
\!\!\!\!\!\!
\displaystyle \sum_{i_1, i_2, i_3} m 
\cdot_\emme
(n   \phi_2^{(1)})^{(0)} 
\Big\langle \phi_1 (n\phi_2^{(1)})^{(1)},  
\big\langle \psi_1 \phi_2^{(2)}, 
\langle \psi_2 \phi_2^{(3)} , {e_{i_3}} \rangle \blact  e_{i_2} \big\rangle \blact e_{i_1}
\Big\rangle
\\
&& \qquad \qquad \qquad
\otimes_\ahha e^{i_1} \otimes_\ahha e^{i_2} \otimes_\ahha e^{i_3} 
\\
&
\!\!\!\!\!\!\!\!\!\!\!\!\!\!\!\!\!\!
{\overset{\scriptscriptstyle{
\eqref{schizzaestrappa2}, \eqref{duedelue}
}}{=}}
&
\!\!\!\!\!\!
\displaystyle \sum_{i_2, i_3} m 
\cdot_\emme
(n   \phi_2^{(1)})^{(0)} 
\otimes_\ahha \phi_1 (n\phi_2^{(1)})^{(1)} \bract  
\big\langle \psi_1 \phi_2^{(2)} \bract \langle \psi_2 \phi_2^{(3)} , {e_{i_3}} \rangle, e_{i_2} \big\rangle
\otimes_\ahha e^{i_2} \otimes_\ahha e^{i_3} 
\\
&
\!\!\!\!\!\!\!\!\!\!\!\!\!\!\!\!\!\!
{\overset{\scriptscriptstyle{
\eqref{schizzaestrappa2}
}}{=}}
&
\!\!\!\!\!\!
m 
\cdot_\emme
(n   \phi_2^{(1)})^{(0)} 
\otimes_\ahha \phi_1 (n\phi_2^{(1)})^{(1)} 
\otimes_\ahha
\psi_1 \phi_2^{(2)} 
\otimes_\ahha
\psi_1 \phi_2^{(3)}, 
\end{array}
\end{equation*}
\end{footnotesize}
which is \eqref{maxdudler2} for this case.
With this property, the statement that $C(U,M)$ under the given conditions defines a cyclic operad with multiplication now follows from the respective property of $C_\co(U^*,M)$.
\end{proof}

Although we already know that  $(C^\bull(U,M), \gd_\bull,
 \gs_\bull, \tau)$ defines (under the mentioned assumptions) a cocyclic $k$-module, we still want to apply Theorem \ref{holl}, that is, \cite[Thm.~1.4]{Men:BVAACCOHA} to add the statement about Batalin-Vilkovisky algebras:

\begin{cor}
\label{pourquoilamuit}
\
\begin{enumerate}
\compactlist{99}
\item
Under the assumptions given in Theorem \ref{corsoumbertoI} (i), the cohomology groups $H^\bull(U,A)$ (resp.\ $\Ext^\bull_U(A,A)$ if $U_\ract$ is projective) form
a Batalin-Vilkovisky algebra.
\item
Under the assumptions given in Theorem \ref{corsoumbertoI} (ii), the cohomology groups $H^\bull(U,M)$ (resp.\ $\Ext^\bull_U(A,M)$ if $U_\ract$ is projective) form
a Batalin-Vilkovisky algebra.
\end{enumerate}
\end{cor}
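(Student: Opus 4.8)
The plan is to read the statement off as a formal consequence of Theorem \ref{corsoumbertoI} combined with the second part of Theorem \ref{holl}. By Theorem \ref{corsoumbertoI}~(i) (resp.\ (ii)) the endomorphism operad $C(U,A)$ (resp.\ $C(U,M)$) is a \emph{cyclic operad with multiplication}, with operad multiplication $\mu = \gve m_\uhhu$ and unit $e$. Feeding this into Theorem \ref{holl}~(ii) yields a cocyclic $k$-module whose simplicial cohomology is automatically a Batalin--Vilkovisky algebra. Thus the entire content of the corollary reduces to (a) identifying the cohomology produced by Theorem \ref{holl} with $H^\bull(U,A)$ (resp.\ $H^\bull(U,M)$), and (b) transporting the result along the comparison isomorphism with the $\Ext$-groups under projectivity of $U_\ract$.

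The one genuine point to verify is (a): that the cocyclic structure which Theorem \ref{holl} extracts \emph{abstractly} from a cyclic operad with multiplication coincides with the \emph{explicit} operators $(\gd_\bull,\gs_\bull,\tau)$ of \eqref{anightinpyongyang1}. For the cyclic operator this is immediate, as $\tau$ is literally the same map in both descriptions. For the cofaces and codegeneracies I would unwind the canonical operadic formulas — the faces obtained by inserting $\mu$ through the $\circ_i$ together with the two extremal compositions $\mu\circ_1(-)$ and $\mu\circ_2(-)$, and the degeneracies obtained by inserting the unit $e$ — using the explicit composition \eqref{maxdudler1} and the chosen $\mu=\gve m_\uhhu$, and check term by term that they reproduce \eqref{anightinpyongyang1}. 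This is a short bookkeeping exercise, essentially the same matching already implicit in Corollary \ref{chelabertaschen2}, where $(C^\bull(U,M),\gd_\bull,\gs_\bull,\tau)$ was shown to be (para-)cocyclic; note that the standing hypothesis that $A$ (resp.\ $M$) be a \emph{stable} aYD contramodule is exactly what upgrades the para-cocyclic module to a genuine cocyclic one, i.e.\ what supplies the relation $\tau^{n+1}=\id$, the fourth of Eqs.~\eqref{superfluorescent1}, via \eqref{altberlin}.

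With identification (a) in hand, the simplicial cohomology of the cocyclic module is by definition the cohomology of $(C^\bull(U,M),\gb)$ with $\gb=\sum_{i=0}^{n+1}(-1)^i\gd_i$, that is $H^\bull(U,M)$ (and $H^\bull(U,A)$ in the first part), so Theorem \ref{holl}~(ii) endows it with a BV structure; this proves the non-$\Ext$ clauses. For the parenthetical clauses I would invoke the isomorphism $H^\bull(U,M)\simeq\Ext^\bull_U(A,M)$, valid when $U_\ract$ is flat over $A$ and in particular when it is projective, as recorded in the Lemma and Definition following \eqref{anightinpyongyang1} (citing \cite{KowKra:BVSOEAT}), and transport the BV structure along it. Part (ii) is then word-for-word the same argument, now using Theorem \ref{corsoumbertoI}~(ii) and the braided commutative Yetter--Drinfel'd algebra $M$. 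I do not expect any real obstacle here: all of the substantive operadic work — compatibility $\tau\mu=\mu$ and the cyclic-operad relations \eqref{superfluorescent1} — is discharged inside Theorem \ref{corsoumbertoI}, so the corollary is purely a packaging step, and the only thing a careful reader should check is the face/degeneracy matching described above.
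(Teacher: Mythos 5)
Your proposal is correct and follows exactly the paper's route: the corollary is obtained by feeding the cyclic operads with multiplication produced by Theorem \ref{corsoumbertoI} into Theorem \ref{holl}\,(ii) (Menichi's result), identifying the resulting cocyclic module and its simplicial cohomology with $(C^\bull(U,M),\gd_\bull,\gs_\bull,\tau)$ and $H^\bull(U,M)$, and then passing to $\Ext^\bull_U(A,M)$ via the comparison isomorphism when $U_\ract$ is projective. The paper treats this as immediate packaging and gives no separate proof; your additional care about matching the operadically induced cofaces and codegeneracies with \eqref{anightinpyongyang1} is exactly the bookkeeping the paper leaves implicit.
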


\section{Examples and applications}
\label{lyra}

In this example section, we will first consider how to apply the results developed so far in the specific case of a bialgebroid resp.\ left Hopf algebroid that leads to the well-known Hochschild complex, and then treat some examples of algebras that allow for a cyclic structure on the Hochschild complex and hence for a BV algebra structure on Hochschild cohomology. In the last section, we shortly deal with Hopf algebras and recover the results from Menichi in \cite{Men:CMCMIALAM}.

\subsection{Batalin-Vilkovisky algebra structures on Hochschild cohomology}
\label{scottex}

Recall the left Hopf algebroid $(U,A) := (\Ae, A)$ for an associative $k$-algebra $A$ with structure maps ({\it cf.}~\cite{Schau:DADOQGHA})
\begin{equation}
\label{umbria1}
s^\ell(a) = a \otimes 1, \quad t^\ell(b) = 1 \otimes b, \quad \gD_\ell(a \otimes b) = (a \otimes 1) \otimes_\ahha (1 \otimes b), \quad \gve(a \otimes b) = ab, 
\end{equation}
along with 
\begin{equation}
\label{umbria2}
(a \otimes b)_+ \otimes_\Aopp (a \otimes b)_- = (a \otimes 1) \otimes_\Aopp (b \otimes 1)
\end{equation}
for all $a , b \in A$.

In the subsequent examples, we are particularly interested in the case in which $A$ itself is a right $\Ae$-contramodule resp.\ an aYD contramodule over $\Ae$. Let us first consider this specific situation: the aYD condition \eqref{nawas1} in case $M = A$ reads for $f \in \Hom_\Aopp(\Ae, A)$ and a right $\Ae$-contraaction $\gamma:  \Hom_\Aopp(\Ae, A) \to A$:
$$
(a \otimes b) (\gamma(f)) 
= \gamma\big(f((b \otimes 1)(-)(a \otimes 1)\big), 
$$
or, equivalently, 
\begin{equation}
\label{hofgarten1824}
a \lact \gamma(f) \ract b 
= \gamma\big(f( b \lact - \bract a)\big) \overset{\scriptscriptstyle{\eqref{carrefour4}, \eqref{passionant}}}{=} a \gamma(f) b,
\end{equation}
that is, coincides with Eq.~\rmref{romaedintorni}.
Observe here that Eq.~\rmref{hofgarten1824} in this case does not constitute a condition: whereas the right $A$-action on $M = A$ is fixed right from the beginning and the contraaction, if it exists, is modelled according to right $A$-linearity, 
simply {\em define} the left $A$-action so as to match \rmref{hofgarten1824}: this does not necessarily coincide with the canonical left action of the bialgebroid $\Ae$ on its base algebra $A$, that is, left and right multiplication (see below).
It then follows that whenever $A$ is a right $\Ae$-contramodule, it is automatically an aYD contramodule. We obtain from Corollary \ref{pourquoilamuit}:


\begin{cor}
\label{stehtdranne}
If for a $k$-algebra $A$ there exists an $\Ae$-contraaction which is stable with respect to the induced left $\Ae$-action in the sense of \rmref{stablehalt},
then its Hochschild cohomology groups $H^\bull(A,A)$ (resp.\  $\Ext^\bull_\Aee(A,A)$ if $A$ is $k$-projective) form
a Batalin-Vilkovisky algebra.
\end{cor}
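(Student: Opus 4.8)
The plan is to exhibit the statement as the special case $(U,A)=(\Ae,A)$ of Corollary \ref{pourquoilamuit}~(i), whose part~(i) is the one we need precisely because it carries \emph{no} finiteness hypothesis --- crucial here, since $\Ae$ is in general not finitely generated over $A$. First I would recall, following \cite{Schau:DADOQGHA}, that $\Ae=A\otimes\Aop$ together with its base $A$ is a left Hopf algebroid with the structure maps \eqref{umbria1} and translation map \eqref{umbria2}; so the hypothesis ``$U$ is a left Hopf algebroid'' of Theorem \ref{corsoumbertoI}~(i) holds automatically.

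The only genuine point to verify is that a stable $\Ae$-contraaction $\gamma$ on $A$ upgrades to a stable \emph{aYD} contramodule structure over $\Ae$ in the sense of Definition \ref{chelabertaschen1}. For this I would invoke the reduction already carried out in \eqref{hofgarten1824}: feeding the explicit translation map \eqref{umbria2} into the compatibility \eqref{nawas1} for the coefficient $M=A$, one checks that for $u=a\otimes b$ the coproduct of $u_+=a\otimes 1$ is grouplike, $\gD_\ell(a\otimes 1)=(a\otimes 1)\otimes_\ahha 1_{\Ae}$, so that the dressing factor $u_{+(2)}$ collapses to the unit and \eqref{nawas1} degenerates to the bare $A$-bimodule identity \eqref{romaedintorni}. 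The decisive observation --- flagged in the paragraph preceding the corollary --- is that \eqref{romaedintorni} is \emph{not} a constraint on $\gamma$: with the right $A$-action on $A$ fixed and $\gamma$ right $A$-linear, one simply \emph{defines} the induced left $A$-action via \eqref{alleskleber} so that \eqref{romaedintorni} holds by fiat. Hence every $\Ae$-contramodule $A$ is automatically aYD, and stability \eqref{stablehalt} of $A$ as an aYD contramodule is literally the stability assumed of $\gamma$. I expect this to be the conceptual crux: it is not a computation so much as the recognition that the degenerate translation map \eqref{umbria2} kills the adjoint term of \eqref{nawas1}, after which the freedom in the left $A$-action absorbs whatever remains.

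With the hypotheses of Theorem \ref{corsoumbertoI}~(i) secured, Corollary \ref{pourquoilamuit}~(i) immediately gives a Batalin-Vilkovisky structure on $H^\bull(\Ae,A)$. It then remains only to translate this back into classical language: under \eqref{umbria1} the complex $C^\bull(\Ae,A)=\Hom_\Aopp((\Ae)^{\otimes_\Aopp\bull},A)$ of \eqref{baumbluetenfest} reduces to the ordinary Hochschild cochain complex $\Hom_k(A^{\otimes\bull},A)$, whence $H^\bull(\Ae,A)\cong H^\bull(A,A)$, the induced cyclic operator being exactly the one recorded in Theorem \ref{B1}. Finally, the proviso ``$U_\ract$ projective'' of Corollary \ref{pourquoilamuit} specialises, for $U=\Ae$, to $A$ being $k$-projective, which is precisely the condition securing the comparison $H^\bull(A,A)\cong\Ext^\bull_\Aee(A,A)$. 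This reduces the corollary entirely to the machinery already established.
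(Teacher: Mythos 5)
Your proposal is correct and follows essentially the same route as the paper: reduce to Theorem \ref{corsoumbertoI}\,(i) via Corollary \ref{pourquoilamuit}\,(i) for $(U,A)=(\Ae,A)$, observe via \eqref{umbria2} that the aYD condition \eqref{nawas1} collapses to the vacuous bimodule identity \eqref{romaedintorni}/\eqref{hofgarten1824} (so any stable contraaction is automatically a stable aYD contramodule structure), and then identify $C^\bull(\Ae,A)$ with the classical Hochschild complex \eqref{pannacotta}. Only a terminological slip: $\gD_\ell(a\otimes 1)=(a\otimes 1)\otimes_\ahha 1_{\Ae}$ is not ``grouplike'' in the usual sense, but the formula itself is exactly what makes $u_{+(2)}$ collapse to the unit, so the argument stands.
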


\begin{rem}
Observe that if $A$ is not stable in the aforementioned sense, the Hochschild cochain spaces still form a para-cyclic operad (with multiplication).
\end{rem}

For more general coefficients $M$, it is convenient to simplify the contramodule axioms in Definition \ref{schoenwaers} using the bialgebroid structure \eqref{umbria1} of $(\Ae, A)$: identifying $\Hom_\Aopp(\Ae, M) \simeq \Hom_k(A,M)$, we can rewrite the conditions \eqref{passionant}--\eqref{carrefour2} as follows: a right $\Ae$-contramodule is a right $A$-module $M$ together with a map 
$$
\gamma: \Hom_k(A,M) \to M
$$ 
such that
\begin{equation}
\label{federicoII}
\begin{array}{rcll}
\gamma\big(f(a(-))\big) &=& \gamma(f)a, & \forall \ f \in \Hom_k(A,M), \\
\dot\gamma\big(\ddot\gamma(g(\cdot \otimes \cdot\cdot))\big) &=& \gamma\big(g(\cdot \otimes 1_\ahha)\big), 
& \forall \ g \in \Hom_k(A \otimes A, M), \\
\gamma(m \id_\ahha(-)) &=& m, & \forall \ m \in M,
\end{array}
\end{equation}
where the first again simply expresses the fact that $\gamma$ be a right $A$-module morphism with respect to the right $A$-action
$
fa := f(a(-)) 
$
for  $f \in \Hom_k(A,M)$ and $a \in A$,
and where again in the second line the dots over the maps are meant to match the respective argument. As before, these conditions imply that by means of
$$
am := \gamma(m \id_\ahha(-) a), \qquad a \in A, \ m \in M,
$$
there is also a left $A$-module structure on $M$
and then
the analogue of Eq.~\eqref{carrefour4} reads
$$
\gamma\big(f((-)a)\big) = a\gamma(f), \qquad f \in \Hom_k(A,A), 
$$
that is, $\gamma$ is an $A$-bimodule map. 

As above, the aYD conditions \eqref{romaedintorni}--\eqref{nawas1} are trivially fulfilled once a contraaction is found. The stability \eqref{stablehalt} now becomes
\begin{equation}
\label{stablenochmal}
\gamma((-)m) = m,
\end{equation}
where as before we denote $(-)m \colon a \mapsto am$ as a map in $\Hom_k(A,M)$.

%

With the formulae in \eqref{umbria1}, it is easy to see that one obtains 
\begin{equation}
\label{pannacotta}
C^\bull(\Ae,M) \simeq \Hom_k(A^{\otimes \bull},M) =: C^\bull(A,M), 
\end{equation}
that is, one obtains the conventional Hochschild complex for a left $\Ae$-module (resp.\ $A$-bimodule) $M$.
It is also quite straightforward to see that
the cosimplicial structure in \eqref{anightinpyongyang1} reduces to the well-known one from \cite{Hoch:OTCGOAAA}, up to the sign $(-1)^{n+1}$. The cocyclic operator from \eqref{anightinpyongyang1} in this case then reads 
\begin{equation}
\label{russischeinfachlernen}
(\tau f)(a_1, \ldots, a_n) = \gamma\big(a_1 f(a_2, \ldots, a_n, -) \big)
\end{equation}
for $f \in C^\bull(A,M)$, using \eqref{umbria2} along with a contraaction as in \eqref{federicoII}.

\begin{rem}
\label{spina}
As already remarked by Connes \cite{Con:NCDG}, it is a priori not clear how to define a cocyclic operator on the Hochschild complex $C^\bull(A,A)$ for an arbitrary associative algebra with coefficients in the algebra itself. To circumvent this problem, cyclic cohomology was defined to be the cyclic cohomology of the complex $C^\bull(A, A^*)$, where $A^* := \Hom_k(A,k)$. One then has $C^\bull(A, A^*) \simeq C^{\bull+1}(A,k)$ and the cocyclic operator is essentially the pull-back of the cyclic operator on the Hochschild homology complex, that is, cyclic permutation. See, {\it e.g.}, \cite[\S2.1.0]{Lod:CH} and also \S1.5.5 in {\it op.~cit.}~for further comments on this with respect to functoriality. However, Eq.~\eqref{russischeinfachlernen} does define indeed a cocyclic operator on  $C^\bull(A,A)$ in case the algebra in question is equipped with an extra structure.   
\end{rem}

\subsubsection{Symmetric algebras}
\label{castelnuovo1}
This subsection could be of course integrated in the subsequent one about Frobenius algebras, but since in this case the argument is notably simpler, we decided to present it separately:
recall from, {\it e.g.}, \cite[\S16F]{Lam:LOMAR} or \cite{EilNak:OTDOMAAIIFAAQFR} that an algebra is called {\em symmetric} (in the sense of representation theory) if there is an isomorphism $A \to A^* := \Hom_k(A,k)$ of $A$-bimodules, where the $A$-bimodule structure on $A^*$ is given by
\begin{equation}
\label{norica1}
a \lact \phi \ract b := \phi(b(-)a) = \langle \phi, b(-)a \rangle
\end{equation}
for $a, b \in A, \ \phi \in A^*$, where $\langle . , . \rangle$ denotes the canonical pairing between $A$ and $A^*$ given by evaluation.

Hence, an $\Ae$-contramodule structure 
$
\Hom_k(A, A) \to A
$
on $A$ amounts to a map 
$$
\Hom_k(A, A^*) \simeq \Hom_k(A \otimes A, k ) \to A^*
$$
subject to \eqref{federicoII}, and it is a simple check that the assignment
\begin{equation}
\label{sahnetorte}
\gamma: \Hom_k(A \otimes A, k ) \to A^*, \quad g \mapsto g(- \otimes 1_\ahha)
\end{equation}
gives such a map such that $A$ is stable over $\Ae$ in the sense of \rmref{stablenochmal}.
We therefore have

\begin{cor}
\label{dresden}
The Hochschild cohomology 
of a symmetric algebra 
is a Batalin-Vilkovisky algebra.
\end{cor}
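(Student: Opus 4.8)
The plan is to invoke Corollary \ref{stehtdranne}, which reduces the assertion to exhibiting on $A$ a \emph{stable} $\Ae$-contraaction in the sense of \eqref{federicoII} and \eqref{stablenochmal}; everything then rests on the single extra datum carried by a symmetric algebra, namely an $A$-bimodule isomorphism $\theta \colon A \to A^*$, where $A^* = \Hom_k(A,k)$ is equipped with the bimodule structure \eqref{norica1}.

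First I would transport the problem across $\theta$. Because a contramodule structure refers only to the right $A$-module structure of the underlying module together with the structure map $\gamma$, and $\theta$ is in particular right $A$-linear, producing a contraaction $\Hom_k(A,A) \to A$ is equivalent to producing one $\Hom_k(A,A^*) \to A^*$. Unpacking the source via the adjunction $\Hom_k(A,A^*) \cong \Hom_k(A \otimes A, k)$, under which $f \colon A \to A^*$ corresponds to $g_f(x \otimes y) = \langle f(x), y\rangle$, identifies the sought contraaction with the candidate \eqref{sahnetorte}, that is, $\gamma(g) = g(- \otimes 1_\ahha)$, which evaluates the final slot at the unit.

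Then I would check the three axioms of \eqref{federicoII} together with the stability condition \eqref{stablenochmal}. Right $A$-linearity is immediate: precomposing $g$ by $a\cdot(-)$ in its first tensor slot matches, after evaluation at $1_\ahha$, the right action on $A^*$ read off from \eqref{norica1}. Unitality and stability both fall out of $1_\ahha$ being a two-sided unit, since feeding the cochains $m\,\id_\ahha(-)$ and $(-)m$ into $\gamma$ and evaluating the last argument at $1_\ahha$ returns $m$; along the way one also checks that the left $A$-action induced by $\gamma$ agrees with the bimodule action transported from $A^*$. The step that genuinely requires care --- and which I expect to be the main obstacle --- is the contra-associativity axiom (second line of \eqref{federicoII}): for $g \in \Hom_k(A \otimes A, A^*) \cong \Hom_k(A \otimes A \otimes A, k)$ one must verify that carrying out the inner contraaction $\ddot\gamma$ in the first factor followed by the outer $\dot\gamma$ coincides with applying $\gamma$ to $g(\,\cdot\, \otimes 1_\ahha)$. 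Tracking the several evaluations at $1_\ahha$ through the adjunction isomorphism and using associativity of the multiplication of $A$ is the only real bookkeeping; no idea beyond the associativity and unitality of $A$ enters.

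Finally, having a stable $\Ae$-contraaction on $A$ in hand, I would conclude by Corollary \ref{stehtdranne} (equivalently Corollary \ref{pourquoilamuit}(i) for $(U,A) = (\Ae,A)$, together with the identification \eqref{pannacotta}) that the Hochschild cohomology $H^\bull(A,A)$, and hence $\Ext^\bull_{\Ae}(A,A)$ when $A$ is $k$-projective --- which holds automatically for the finite-dimensional symmetric algebras one has in mind --- carries the structure of a Batalin-Vilkovisky algebra.
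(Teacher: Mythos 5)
Your proposal is correct and follows the paper's own argument essentially verbatim: transport the problem along the bimodule isomorphism $A \simeq A^*$ of a symmetric algebra, identify $\Hom_k(A,A^*) \simeq \Hom_k(A\otimes A,k)$, take the contraaction \eqref{sahnetorte} given by evaluation at $1_\ahha$, verify \eqref{federicoII} and stability \eqref{stablenochmal}, and conclude via Corollary \ref{stehtdranne}. The only difference is that the paper dismisses the axiom verification as ``a simple check,'' whereas you spell out which axioms need checking; this is a fleshing-out, not a different route.
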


This was the result obtained in \cite{Tra:TBVAOHCIBIIP} and later in \cite[\S4]{Men:BVAACCOHA} as well as \cite{EuSche:CYFA}.

\subsubsection{Frobenius algebras}
\label{castelnuovo3}
In this section, let $k$ be a field, which, for simplicity, we assume to be algebraically closed (if not, one can generalise the subsequent considerations along the lines in \cite[\S4]{LamZhoZim:THCROAFAWSSNAIABVA}).

For Frobenius algebras, there exists a considerable amount of equivalent definitions, most of which are listed in, for example, \cite{Str:FMAP}. We use here the following formulation and give a few well-known details that are needed in the sequel:

 \begin{dfn}
 A {\em Frobenius algebra} is an algebra $A$ with a 
 functional 
 $ \varepsilon : A \rightarrow k$ 
 such that the map $A \rightarrow A^*, a \mapsto
 \varepsilon_a$ with $ \varepsilon_a(b):=\varepsilon(ab)$ 
 is bijective. The functional $ \varepsilon $
 is called a {\em Frobenius functional}.
 \end{dfn}

 One can show that not only $A$ is finite-dimensional, but also that on any Frobenius algebra there is a $k$-coalgebra structure the coproduct of which is an $A$-bimodule map and the counit of which is given by the Frobenius functional: to this end, let $\{e_i\}_{1 \leq i \leq n}  \in A$ be a basis and define another basis $\{e^i\}_{1 \leq i \leq n}  \in A$ by means of $\gve(e^j e_i) = \delta^j_i$. Set
 $$
 	\Delta (1)=\textstyle\sum_{i}  e_i \otimes e^i,
 $$
 and by $A$-bilinearity we have for all $a \in A$:
 \begin{equation}
 \label{sonne}
 	\Delta (a)=\textstyle\sum_{i} ae_i \otimes e^i = \textstyle\sum_{i} e_i \otimes e^ia.
 \end{equation}
Counitality amounts to
 \begin{equation}
 \label{mond1}
 	a=\textstyle\sum_{i} \varepsilon (ae_i)e^i=
 	\textstyle\sum_{i} \varepsilon (e^ia)e_i
 \end{equation}
 for all $a \in A$, and in particular
 \begin{equation}
 \label{sterne}
 1 =\textstyle\sum_{i} \varepsilon (e_i)e^i=
 	\textstyle\sum_{i} \varepsilon (e^i)e_i.
 \end{equation}
 The map $a \mapsto \gve_a$ is right $A$-linear, where the right action on $A^*$ is given by $\phi \otimes a \mapsto \phi \ract a := \phi(a(-))$ and
 the right $A$-action on $A$ is right multiplication; hence, $A \simeq A^*$ as right $A$-modules. This, in turn, implies that  $a \mapsto \gve_a$ also becomes a map of left $A$-modules, where the left action on $A^*$ is given by $a \otimes \phi \mapsto a \lact \phi:= \phi((-)a)$, whereas
 the left $A$-action on $A$ is twisted by an automorphism $\gs \in \Aut(A)$, that is $a \otimes b \mapsto \gs(a)b$. Therefore,  
 $ {}_\gs A \simeq A^*$ as $A$-bimodules resp.\ left $\Ae$-modules. The automorphism $\gs$ is determined up to inner automorphisms and called the {\em Nakayama automorphism}. This leads to the identity $\gve(ab) = \gve(\gs(b)a)$, and also
$
\gve(e_ia)e^i = \textstyle \sum_i \gve(\gs(a)e_i) e^i = \gs(a) = \sum_i \gve(e^i \gs(a)) e_i.
$
Hence, 
\begin{equation}
\label{mond2}
	\Delta (\gs(a))=\textstyle\sum_{i} e_ia \otimes e^i = \textstyle\sum_{i} e_i \otimes \gs(a) e^i.
\end{equation}


\begin{lem}
\label{dasnochhier}
Let $A$ be an arbitrary Frobenius algebra 
and let ${}_\gs A$ be $A$ as a $k$-module but with the twisted left $\Ae$-action given by $(a \otimes b)c = a \lact c \ract b := \gs(a)cb$. Then the assignment
$$
\gamma: \Hom_k(A, {}_\gs A) \to {}_\gs A, \quad f \mapsto \textstyle\sum_i \gve\big(f(e_i)\big) e^i
$$
defines a right $\Ae$-contraaction on ${}_\gs A$.
\end{lem}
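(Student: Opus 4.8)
The plan is to verify directly the three simplified contramodule axioms collected in \eqref{federicoII} for the explicit map $\gamma(f) = \sum_i \gve(f(e_i))\,e^i$, since the paper has already reduced the notion of a right $\Ae$-contramodule to exactly these identities in the Hochschild case. The guiding observation is that, as a \emph{right} $A$-module, ${}_\gs A$ is simply $A$ with right multiplication (the twist $\gs$ affects only the left action), so the contraaction never sees anything but this untwisted right structure. Each of the three conditions then collapses to one of the Frobenius dual-basis identities recorded above, and the role of the dual basis $\{e^i\}$ defined by $\gve(e^j e_i) = \delta^j_i$ is precisely to make these collapses happen.

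For the right $A$-linearity (first line of \eqref{federicoII}) I would start from $\gamma(f(a(-))) = \sum_i \gve(f(ae_i))\,e^i$ and apply the $k$-linear map $x \otimes y \mapsto \gve(f(x))\,y$ to the identity $\sum_i ae_i \otimes e^i = \sum_i e_i \otimes e^i a$ supplied by \eqref{sonne}; this rewrites the expression as $\sum_i \gve(f(e_i))\,e^i a = \gamma(f)\ract a$, which is the claim. The unitality (third line of \eqref{federicoII}) is immediate: the map $m\,\id_\ahha(-)$ is $x \mapsto m x$, so $\gamma(m\,\id_\ahha(-)) = \sum_i \gve(m e_i)\,e^i = m$ is nothing but the counitality relation \eqref{mond1}.

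The contra-associativity (second line of \eqref{federicoII}) is the only step demanding a little attention. Carrying out the inner contraaction $\ddot\gamma$ first on the second slot of $g$ and then the outer $\dot\gamma$ on the first, I expect to reach $\dot\gamma\big(\ddot\gamma(g(\cdot \otimes \cdot\cdot))\big) = \sum_{i,j} \gve(g(e_i \otimes e_j))\,\gve(e^j)\,e^i$, where the scalars $\gve(e^j) \in k$ may be pulled out freely. Collapsing the $j$-sum via $\sum_j \gve(e^j)\,e_j = 1$ from \eqref{sterne} together with the $k$-bilinearity of $g$ then yields $\sum_i \gve(g(e_i \otimes 1_\ahha))\,e^i = \gamma(g(\cdot \otimes 1_\ahha))$, as needed. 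The main obstacle is entirely bookkeeping: reading the double-dot convention of \eqref{carrefour1} correctly so that the inner versus outer $\gamma$ act on the intended slots, and keeping track of which factors are scalars so that the resolution-of-identity relations can actually be invoked; none of the individual manipulations is deep.

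Finally, although the Nakayama automorphism $\gs$ is invisible in the three axioms themselves, I would record for context that feeding $\gamma$ into \eqref{alleskleber} recovers exactly the twisted action: $am := \gamma\big(m\,\id_\ahha(-)\,a\big) = \sum_i \gve(m e_i a)\,e^i = \sum_i \gve(\gs(a)\,m e_i)\,e^i = \gs(a)\,m$, where I used the Nakayama relation $\gve(xy) = \gve(\gs(y)x)$ followed once more by \eqref{mond1}. This shows that the $A$-bimodule structure induced by the contraaction is precisely ${}_\gs A$, which both justifies the notation and prepares the consistency \eqref{romaedintorni} needed to view ${}_\gs A$ as an aYD contramodule in the subsequent corollaries.
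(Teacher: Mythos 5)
Your proposal is correct and follows essentially the same route as the paper: a direct verification of the three axioms in \eqref{federicoII}, with the first and third identities collapsing via \eqref{sonne} resp.\ \eqref{mond1}, and the contra-associativity handled by exactly the same double-dual-basis computation (pulling the scalar $\gve(g(e_i\otimes e_j))$ out of the outer $\gve$ and contracting with \eqref{sterne}). Your closing observation that \eqref{alleskleber} reproduces the twisted action $am=\gs(a)m$ likewise matches the remark the paper makes immediately after its proof.
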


\begin{proof}
We have to check the three identities
$$
\gamma\big(f(a(-))\big) = \gamma(f)a, 
\quad \dot\gamma\big(\ddot\gamma(g(\cdot \otimes \cdot\cdot))\big) = \gamma\big(g(\cdot \otimes 1_\ahha)\big), 
\quad
\gamma(a \id_\ahha(-)) = a,
$$
for all $a \in A$, where $f \in \Hom_k(A,A)$ and $g  \in \Hom_k(A \otimes A, A)$.
The first and the third identity both follow from \rmref{sonne}. As for the second, compute
\begin{equation*}
\begin{split}
\dot\gamma\big(\ddot\gamma(g(\cdot \otimes \cdot\cdot))\big) 
&= \textstyle\sum_i \gamma\big(\gve\big(g((-) \otimes e_i)\big)e^i\big) \\
&= \textstyle\sum_{i,j} \gve\Big(\gve\big(g(e_j \otimes e_i)\big)e^i\Big)e^j \\
&= \textstyle\sum_{i,j} \gve(e^i) \gve\big(g(e_j \otimes e_i)\big)  e^j \\
&= \textstyle\sum_{i,j}  \gve\big(g(e_j \otimes \gve(e^i) e_i)\big)  e^j \\
&= \textstyle\sum_{j}  \gve\big(g(e_j \otimes 1_\ahha)\big)  e^j \\
&= \gamma\big(g(\cdot \otimes 1_\ahha)\big), 
\end{split}
\end{equation*}
where we used the fact that $\gve$ lands in $k$ and is therefore central as well as \rmref{sterne} in the penultimate step.
\end{proof}

Let us repeat that the contraaction is by construction automatically left $\Ae$-linear with respect to the twisted action on ${}_\gs A$, as is also directly seen using \rmref{sonne} and \rmref{mond2}. Hence, the Hochschild cochain spaces for an arbitrary Frobenius algebra constitute a para-cyclic operad with multiplication.
However, using the same Eq.~\rmref{mond2}, one immediately sees that 
\begin{equation}
\label{selbstbedienung}
\gamma((-)a) = \gs(a), \qquad \forall a \in A,
\end{equation}
that is, the contramodule ${}_\gs A$ in general is not stable. To obtain cyclicity and thereby the desired Batalin-Vilkovisky algebra structure on Hochschild cohomology, one has to restrict the class of Frobenius algebras under consideration.

\subsubsection{The case of semisimple $\gs$}
 In the following (similar to \cite{KowKra:BVSOEAT, LamZhoZim:THCROAFAWSSNAIABVA} but more direct as the cocyclic operator $\tau$ on the Hochschild cochain spaces is directly available and we do not need to pass through duals), we restrict to the case in which $\gs$ is diagonalisable (semisimple), meaning that there is a subset $\gS \subseteq k \backslash \{0\}$ and a decomposition of $k$-vector spaces of the form
 $$
 A =\bigoplus_{\lambda \in \Sigma} A_\lambda, \quad A_\lambda = \{a \in A \mid \sigma (a)=\lambda a\}.
 $$
 Note that $1 \in \Sigma $ because $\sigma (1)=1$ and that $A_\lambda A_\mu \subseteq A_{\lambda\mu}$. Denote by $\langle \Sigma \rangle$ the monoid generated by $\Sigma$.
Extending Corollary \ref{dresden}, we obtain:

\begin{cor}
For a Frobenius algebra over a field with diagonalisable Nakayama automorphism, its Hochschild cohomology is a Batalin-Vilkovisky algebra.
\end{cor}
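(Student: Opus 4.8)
The plan is to obtain the assertion from Corollary~\ref{pourquoilamuit}\,(i) by promoting, in the presence of a diagonalisable Nakayama automorphism, the para-cyclic structure that is available for \emph{every} Frobenius algebra to an honestly cyclic one. By Lemma~\ref{dasnochhier} the twisted bimodule ${}_\gs A$ carries a right $\Ae$-contraaction $\gamma$, so Corollary~\ref{chelabertaschen2} equips the Hochschild cochain spaces with the operators $(\gd_\bull,\gs_\bull,\tau)$, and by the first part of Theorem~\ref{corsoumbertoI} (whose verification of the operad axioms never used stability) $C(A,A)$ is a para-cyclic operad with multiplication $\mu=\gve m_\uhhu$, the cyclic operator being \eqref{russischeinfachlernen}. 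The sole missing relation is $\tau^{n+1}=\id$, and by \eqref{altberlin} together with \eqref{selbstbedienung} the automorphism $\tau^{n+1}$ of this para-cyclic object is exactly the twist induced by $\gs$. Everything therefore reduces to trivialising this twist cohomologically once $\gs$ is semisimple; for $\gs=\id$ the twist is already trivial and one recovers Corollary~\ref{dresden}.

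First I would use the eigenspace grading $A=\bigoplus_{\gl\in\Sigma}A_\gl$. Because $A_\gl A_\mu\subseteq A_{\gl\mu}$ this is a grading of $A$ by the monoid $\langle\Sigma\rangle$, and the Frobenius data are compatible with it: by \eqref{sonne} and \eqref{mond2} the coproduct is homogeneous, and since $\gve(ab)=\gve(\gs(b)a)$ forces $\gve$ to be supported in weight one, the dual bases $\{e_i\},\{e^i\}$ are weight-dual. It follows that the Hochschild differential, the operadic insertions \eqref{maxdudler1}, the multiplication $\mu$ and the operator $\tau$ of \eqref{russischeinfachlernen} all respect the induced grading on $C^\bull(A,A)$. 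Since $\tau^{n+1}$ is the diagonalisable $\gs$-twist, its eigenspaces are subcomplexes closed under the insertions $\circ_i$, and on the eigenvalue-$1$ summand one has $\tau^{n+1}=\id$ together with $\tau\mu=\mu$. That summand is thus a genuine cyclic operad with multiplication, so by Theorem~\ref{holl}\,(ii) its cohomology is a Batalin-Vilkovisky algebra.

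The hard part is to identify this eigenvalue-$1$ cohomology with the full $H^\bull(A,A)$ carrying its standard Gerstenhaber structure; equivalently, to show that $\gs$ acts as the identity on $H^\bull(A,A)$, so that the complementary eigen-subcomplexes are acyclic and contribute nothing. In degree zero this is immediate: non-degeneracy of $\gve$ and $\gve(ab)=\gve(\gs(b)a)$ give $\gs(z)=z$ for every central $z$, so $H^0(A,A)=Z(A)$ is concentrated in weight one. To propagate this to all degrees I would exhibit the chain automorphism $f\mapsto\gs\circ f\circ(\gs^{-1})^{\otimes\bull}$ induced by $\gs$ as chain homotopic to the identity on the normalised complex, building the homotopy from the $A$-bilinear Frobenius coproduct of \eqref{sonne}; semisimplicity of $\gs$ is what guarantees that the eigenspace decomposition is an internal direct sum with no extension problems, so that $\tau^{n+1}_\ast=\id$ actually forces $H^\bull(A,A)=H^\bull(A,A)_1$. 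Granting this homotopy---the genuine technical heart of the argument---the Batalin-Vilkovisky structure produced on the weight-one part is the one on all of $H^\bull(A,A)$, and the corollary follows.
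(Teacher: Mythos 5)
You set up the proof exactly as the paper does: the weight decomposition $A=\bigoplus_{\gl\in\Sigma}A_\gl$, the para-cyclic operad structure on $C^\bull(A,A)$ coming from Lemma \ref{dasnochhier} and Corollary \ref{chelabertaschen2}, the identification of $\tau^{n+1}$ with the $\gs$-twist via \eqref{altberlin} and \eqref{selbstbedienung}, and the observation that the weight-one summand $C^\bull(A,A)_1$ is therefore an honestly cyclic operad with multiplication, whose cohomology is Batalin-Vilkovisky by Theorem \ref{holl}\,(ii). Up to that point your argument and the paper's coincide.

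The gap is the step you yourself flag and then ``grant'': the vanishing of $H^\bull(A,A)_\gl$ for $\gl\neq 1$, equivalently $H^\bull(A,A)=H^\bull(A,A)_1$. You propose to prove it by constructing, by hand, a chain homotopy between $f\mapsto\gs\circ f\circ(\gs^{-1})^{\otimes\bull}$ and the identity out of the Frobenius coproduct \eqref{sonne}; no such construction is given, and without it the corollary is not proved---this is precisely where the content of the statement sits. What you are missing is that the para-cyclic structure you already invoked hands you this homotopy for free: for any para-cocyclic object one has the identity $\beta B+B\beta=\id-\tau^{n+1}$ in degree $n$, where $B$ is the Connes-type coboundary built from $\tau$ and the extra codegeneracy (see \cite[Eq.~(2.18)]{KowKra:BVSOEAT}). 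Since $\tau^{n+1}$ acts on $\gl$-homogeneous cochains as multiplication by $\gl$ (this is your own identification of $\tau^{n+1}$ with the $\gs$-twist, read on $C^\bull(A,A)_\gl$), and since the operators \eqref{anightinpyongyang1} preserve each $C^\bull(A,A)_\gl$, restricting this identity gives $\beta_\gl B+B\beta_\gl=(1-\gl)\,\id$, so that $(1-\gl)^{-1}B$ is a contracting homotopy and $H^\bull(A,A)_\gl=0$ for all $\gl\neq 1$. Combined with the splitting \eqref{rfi}---which is the only place where diagonalisability of $\gs$ enters---this yields $H^\bull(A,A)\simeq H^\bull(A,A)_1$ and closes the argument. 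Note also that your appeal to semisimplicity as ruling out ``extension problems'' is not the operative mechanism: semisimplicity only provides the chain-level eigenspace splitting, while the cohomological vanishing comes from the displayed homotopy identity, not from the splitting itself.
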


\begin{proof}
As in \cite[p.~57]{KowKra:BVSOEAT}, write
$$
        C^p(A,A)_\lambda := \{ \varphi \in C^p(A,A)
        \mid 
        \varphi ((A^{\otimes_k p})_\mu) \,{\subseteq}\,
        A_{\lambda \mu}\}
$$ 
for the set of all $\lambda$-homogeneous
$p$-cochains, where 
$$
        (A^{\otimes_k p})_\mu :=
        \bigoplus_{\mu_1,\ldots,\mu_p
          \in \langle \Sigma \rangle \atop \mu_1 \cdots \mu_p=\mu} 
        A_{\mu_1} \otimes_k \cdots \otimes_k A_{\mu_p}
$$ 
denotes the homogeneous component of elements of $A^{\otimes_k p}$ of total degree  $ \mu \in \langle \Sigma \rangle$. One has an isomorphism 
\begin{equation}
\label{rfi}
        C^\bull(A,A) \simeq C^\bull(A,A)_1 \oplus 
        \Bigl(C^\bull(A,A) \cap \textstyle \prod_{\lambda \in k \setminus \{0,1\}} 
        C^\bull(A,A)_\lambda\Bigr),
\end{equation}
which splits off the homogeneous degree $1$ part. As in \cite[Cor.~3.8]{LamZhoZim:THCROAFAWSSNAIABVA}, one can show that the restriction $\gb_\lambda$ of the Hochschild differential $\gb$ to the $\lambda$-homogeneous $p$-cochains maps them into the $\lambda$-homogeneous $(p+1)$-cochains; accordingly, set $H^\bull(A,A)_\lambda := H\big(C^\bull(A,A)_\lambda, \gb_\lambda \big)$. On the other hand, for a para-cocyclic object, one has the general identity $\beta B + B \beta = \id - \tau^{n+1}$ in degree $n$ (see, for example, \cite[Eq.~(2.18)]{KowKra:BVSOEAT}), and by means of \rmref{selbstbedienung}, one observes that restricting this identity to $C^\bull(A,A)_\lambda$ reads
 $\beta_\gl B + B \beta_\gl = (1- \gl)\id$. Hence, for $\gl \neq 1$ the operator $B$ is a contracting homotopy for  $C^\bull(A,A)_\lambda$ and therefore  $H^\bull(A,A)_\lambda = 0$. By the splitting \rmref{rfi}, we then obtain $H^\bull(A,A)_1 \simeq H^\bull(A,{}_\gs A)$ and since by Lemma \ref{dasnochhier} along with \rmref{selbstbedienung} we know that the cochain spaces $C^\bull(A,A)_1$ yield a cyclic operad with multiplication, the statement is proven using Corollary \ref{stehtdranne}. 
\end{proof}

This was the main result obtained in \cite{LamZhoZim:THCROAFAWSSNAIABVA} and independently also in \cite{Vol:BVDOHCOFA}. We refer again to  \cite[\S5]{LamZhoZim:THCROAFAWSSNAIABVA} and in particular Proposition~5.7 in {\it op.~cit.}~for a large amount of examples resp.\ classes of algebras where the semisimplicity condition for the Nakayama automorphism is met, and also for a couple of examples where this is {\em not} the case. However, up to our knowledge it is not proven (or disproven) whether in the latter situation the existence of a BV structure on Hochschild cohomology is automatically excluded for some reason.

\subsection{Hopf algebras}
\label{toisondor}
If $U := H$ is a Hopf algebra over $A := k$, then the theory developed in \S5 notably simplifies since there is a canonical contraaction on any $k$-module $M$ simply given by evaluation on $1 \in H$; remember that this is in general not allowed, see the comment after \eqref{passionant} and is parallel to the fact that not every $A$-module can be given the structure of a trivial {\em co}\/module over a bialgebroid, in contrast to the bialgebra case. In case the Hopf algebra possesses further grouplike elements (apart from $1$), that is, elements $\varsigma \in H$ fulfilling $\gD(\varsigma) = \varsigma \otimes \varsigma$ and $\gve(\varsigma) = 1$, one can even make the following simple observation:

\begin{lem}
Let $H$ be a Hopf algebra over $k$. Then every $k$-module $M$ carries a trivial $H$-contraaction given by
$$
\gamma: \Hom_k(H,M) \to M, \quad f \mapsto f(\varsigma)
$$
for a grouplike element $\varsigma \in H$.
\end{lem}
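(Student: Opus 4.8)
The plan is to verify directly that the evaluation map $\gamma(f) := f(\varsigma)$ satisfies the three defining conditions of a right contramodule over the left bialgebroid $(H,k)$ from Definition \ref{schoenwaers}, namely the right $A$-linearity \eqref{passionant}, the contra-associativity \eqref{carrefour1}, and the unitality \eqref{carrefour2}, specialised to the case $A = k$. Since $k$ is central in $H$, all four $A$-module structures collapse to the underlying $k$-module structure and $\Hom_\Aopp(H,M) = \Hom_k(H,M)$; thus the map $f \mapsto f(\varsigma)$ is manifestly $k$-linear, and for $a \in k$ we have $\gamma(f(a\lact -)) = \gamma(af) = af(\varsigma) = \gamma(f)a$, so \eqref{passionant} holds with nothing beyond this $k$-linearity to check.

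First I would dispose of the unitality \eqref{carrefour2}: for $m \in M$ the cochain $m\gve(-)$ is the map $h \mapsto m\gve(h)$, whence $\gamma(m\gve(-)) = m\gve(\varsigma) = m$, using that a grouplike element satisfies $\gve(\varsigma) = 1$. Next, for the contra-associativity \eqref{carrefour1}, I would take $g \in \Hom_k(H \otimes H, M)$ and compute the left-hand side $\dot\gamma(\ddot\gamma(g(\cdot \otimes \cdot\cdot)))$ by first applying the inner contraaction in the second slot, which gives $h \mapsto g(h \otimes \varsigma)$, and then the outer one in the first slot, which gives $g(\varsigma \otimes \varsigma)$. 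The right-hand side $\gamma(g(-_{(1)} \otimes -_{(2)}))$ evaluates to $g(\varsigma_{(1)} \otimes \varsigma_{(2)})$, and the two agree precisely because $\gD(\varsigma) = \varsigma \otimes \varsigma$.

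There is no serious obstacle here; the content of the lemma is conceptual rather than computational. The one place where the hypotheses genuinely enter is that the grouplike conditions $\gve(\varsigma) = 1$ and $\gD(\varsigma) = \varsigma \otimes \varsigma$ are exactly what make the naive evaluation a legitimate contraaction: the former yields unitality and the latter yields contra-associativity. This is also why the construction is special to the case $A = k$: for a genuine bialgebroid over a noncommutative base the analogous evaluation $f \mapsto f(1_\uhhu)$ is obstructed by the right $A$-linearity \eqref{passionant}, as observed in the remark following that equation, whereas over $A = k$ that obstruction is vacuous, so every grouplike $\varsigma$ (in particular $\varsigma = 1$) produces a contraaction on any $k$-module $M$.
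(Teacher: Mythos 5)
Your proof is correct and is exactly the direct verification the paper has in mind: the paper states this lemma as a ``simple observation'' and omits the proof entirely, so your check of the three contramodule axioms is precisely what fills that gap. The reductions are as you say --- right $k$-linearity is automatic since $A=k$ is central, unitality \eqref{carrefour2} is exactly $\gve(\varsigma)=1$, and contra-associativity \eqref{carrefour1} is exactly $\gD(\varsigma)=\varsigma\otimes\varsigma$ --- and your closing remark on why the evaluation map fails over a noncommutative base matches the paper's own comment following \eqref{passionant}.
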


This explains why when defining a cocyclic structure on $C^\bull(H,M)$ or $C^\bull(H,k)$ as for example done in \cite{Men:CMCMIALAM}, 
the contraaction becomes somehow ``invisible''. In fact, the cocyclic operator from \eqref{anightinpyongyang1} now reads
$$
(\tau f)(h^1, \ldots, h^n) = 
f\big(h^2_{(1)}, \ldots, h^n_{(2)}, S(h^n_{(2)}) \cdots S(h^2_{(2)}) S(h^1)\varsigma\big),
$$
where we used $h_+ \otimes h_- = h_{(1)} \otimes S(h_{(2)})$, the bialgebra counitality as well as the left action \eqref{gianduiotto1a} for $A = k$ along with the $k$-linearity of the bialgebra counit. 

In case $M = k$, as for the anti Yetter-Drinfel'd contramodule structure on $k$, the condition \eqref{nawas1} now reads
$$
\gve(h)f(\varsigma) = h\big(\gamma(f)\big) =  \gamma \Big(\gve(h_{(2)}) f\big(S(h_{(3)})(-)h_{(1)}\big) \Big) = f\big(S(h_{(2)})\varsigma h_{(1)}\big),
$$
and using $S(\varsigma)\varsigma = 1 = S^{-1}(\varsigma)\varsigma$ one directly checks that $k$ is a stable aYD contramodule (relative to the grouplike element $\varsigma$, denoted by $k_\varsigma$) if $S^2(h) = \varsigma h \varsigma^{-1}$. 
As a consequence, we recover Theorem 50 in \cite{Men:CMCMIALAM}:

\begin{cor}
Let $H$ be a Hopf algebra over $k$ and $\varsigma$ a grouplike element such that the antipode is a twisted involution, that is, $S^2(h) = \varsigma h \varsigma^{-1}$ for all $h \in H$. Then $H^\bull(H,k_\varsigma)$ resp.\ $\Ext^\bull_H(k,k_\varsigma)$ is a Batalin-Vilkovisky algebra.
\end{cor}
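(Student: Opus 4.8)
The plan is to obtain this corollary as a direct instance of Corollary~\ref{pourquoilamuit}~(i) (resp.\ Theorem~\ref{corsoumbertoI}~(i)) applied to the left Hopf algebroid $(U,A) = (H,k)$. Recall from \S\ref{gamberetti} that a Hopf algebra over $k$ is indeed a left Hopf algebroid with translation map $h_+ \otimes h_- = h_{(1)} \otimes S(h_{(2)})$, and that the base algebra $A = k$ carries the $H$-module structure $h\lambda := \gve(h)\lambda$. By the preceding lemma, evaluation at the grouplike element, $\gamma(f) := f(\varsigma)$, is a right $H$-contraaction on $k$; denote the resulting structure by $k_\varsigma$. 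First I would reduce everything to a single scalar identity: since the $H$-action on $k$ is through $\gve$ and $\gamma$ is evaluation at $\varsigma$, the aYD compatibility \eqref{nawas1} for $u = h$, after inserting $h_+ \otimes h_- = h_{(1)} \otimes S(h_{(2)})$ and collapsing the resulting $\gve(h_{(2)})$, becomes the requirement that $f(S(h_{(2)})\varsigma h_{(1)}) = \gve(h)\,f(\varsigma)$ for every $f \in \Hom_k(H,k)$; equivalently, since $f$ is arbitrary,
\begin{equation*}
S(h_{(2)})\,\varsigma\, h_{(1)} = \gve(h)\,\varsigma, \qquad \forall h \in H.
\end{equation*}

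The core step is to verify this identity, and this is where the twisted-involution hypothesis enters. Grouplikeness of $\varsigma$ gives $\gve(\varsigma) = 1$ and $S(\varsigma)\varsigma = \varsigma S(\varsigma) = 1$, so $\varsigma$ is invertible with $\varsigma^{-1} = S(\varsigma)$ and the assumption $S^2(h) = \varsigma h \varsigma^{-1}$ may be rewritten as $\varsigma h = S^2(h)\varsigma$. Using this on the factor $\varsigma h_{(1)}$ I would get $S(h_{(2)})\varsigma h_{(1)} = S(h_{(2)})\,S^2(h_{(1)})\,\varsigma$, and then the anti-multiplicativity of the antipode together with the antipode axiom $S(h_{(1)})h_{(2)} = \gve(h)1$ yields
\begin{equation*}
S(h_{(2)})\,S^2(h_{(1)}) = S\big(S(h_{(1)})h_{(2)}\big) = S(\gve(h)1) = \gve(h)\,1,
\end{equation*}
whence $S(h_{(2)})\varsigma h_{(1)} = \gve(h)\varsigma$, as needed. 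This confirms that $k_\varsigma$ is an aYD contramodule over $H$.

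Finally, stability \eqref{stablehalt} is immediate: the map $(-)m\colon h \mapsto hm = \gve(h)m$ is sent by $\gamma$ to $\gve(\varsigma)m = m$ since $\gve(\varsigma) = 1$, so $k_\varsigma$ is a \emph{stable} aYD contramodule. With this, Corollary~\ref{pourquoilamuit}~(i) applies and shows that $C(H,k_\varsigma)$ is a cyclic operad with multiplication, so that $H^\bull(H,k_\varsigma)$ is a Batalin--Vilkovisky algebra; as $k$ is trivially $k$-projective, $H^\bull(H,k_\varsigma) \simeq \Ext^\bull_H(k,k_\varsigma)$, recovering Menichi's Theorem~50 in \cite{Men:CMCMIALAM}. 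The only genuine obstacle is the scalar identity above, and even that is a short antipode computation; the remainder is bookkeeping to match the general hypotheses of Corollary~\ref{pourquoilamuit}~(i).
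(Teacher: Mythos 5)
Your proof is correct and follows essentially the same route as the paper: evaluation at the grouplike $\varsigma$ as the contraaction, reduction of the aYD compatibility \eqref{nawas1} to the identity $S(h_{(2)})\,\varsigma\, h_{(1)} = \gve(h)\,\varsigma$, its verification from $S^2(h)=\varsigma h \varsigma^{-1}$ via anti-multiplicativity of $S$ and the antipode axiom, stability from $\gve(\varsigma)=1$, and finally an appeal to Corollary \ref{pourquoilamuit}~(i); the paper merely compresses your antipode computation into the phrase ``one directly checks''. One small slip to fix: the identification $H^\bull(H,k_\varsigma)\simeq \Ext^\bull_H(k,k_\varsigma)$ requires $U_\ract$, i.e.\ $H$ itself, to be projective (or flat) as a $k$-module --- automatic when $k$ is a field --- and not the trivially true statement that ``$k$ is $k$-projective''.
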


\providecommand{\bysame}{\leavevmode\hbox to3em{\hrulefill}\thinspace}
\providecommand{\MR}{\relax\ifhmode\unskip\space\fi M`R }
\providecommand{\MRhref}[2]{%
  \href{http://www.ams.org/mathscinet-getitem?mr=#1}{#2}
}
\providecommand{\href}[2]{#2}

\end{document}